\theoremstyle{plain}
\newtheorem{theorem}{Theorem}[section]
\newtheorem{remark}{Remark}[section]
\newtheorem{lemma}{Lemma}[section]
\newtheorem{assumption}{Assumption}[section]
\newtheorem{corollary}{Corollary}[section]
\newtheorem{example}{Example}[section]
\numberwithin{equation}{section}
\renewcommand{\d}{\mathrm{d}}
\def\II{(\Omega)}
\def\tu{\tilde{u}}
\def\A{\mathcal{A}}
\title{Numerical Reconstruction of Diffusion and Potential Coefficients from Two Observations: Decoupled Recovery and Error Estimates}
\author{Siyu Cen \thanks{Department of Applied Mathematics, The Hong Kong Polytechnic University, Kowloon, Hong Kong, China. (\texttt{21037194r@connect.polyue.hk}).}
\and Zhi Zhou\thanks{Department of Applied Mathematics, The Hong Kong Polytechnic University, Kowloon, Hong Kong, China. (\texttt{zhizhou@polyu.edu.hk}).
The work of Z. Zhou is partly supported by Hong Kong Research Grants Council (15303122) and an internal grant of Hong Kong Polytechnic University (Project ID: P0031041,
Work Programme: ZZKS)}}
\begin{document}

\maketitle
\begin{abstract}
The focus of this paper is on the concurrent reconstruction of both the diffusion and potential coefficients present in an elliptic/parabolic equation, utilizing two internal measurements of the solutions. A decoupled algorithm is constructed to sequentially recover these two parameters.
In the first step, we implement a straightforward reformulation that results in a standard problem of identifying the diffusion coefficient. This coefficient is then numerically recovered, with no requirement for knowledge of the potential, by utilizing an output least-square method coupled with finite element discretization.
In the second step, the previously recovered diffusion coefficient is employed to reconstruct the potential coefficient, applying a method similar to the first step. Our approach is stimulated by a constructive conditional stability, and we provide rigorous a priori error estimates in $L^2(\Omega)$ for the recovered diffusion and potential coefficients.
Our approach is stimulated by a constructive conditional stability, and we provide rigorous a priori error estimates in $L^2(\Omega)$ for the recovered diffusion and potential coefficients. To derive these estimates, we develop a weighted energy argument and suitable positivity conditions. These estimates offer a beneficial guide for choosing regularization parameters and discretization mesh sizes, in accordance with the noise level. Some numerical experiments are presented to demonstrate the accuracy of the numerical scheme and support our theoretical results.\vskip5pt
\noindent\textbf{Key words}: identify multiple coefficients, decoupled algorithm,  conditional stability,
least-squares approach, finite element approximation,  error estimate.
\end{abstract}

\section{Introduction}\label{sec:intro}
This paper is concerned with the inverse problem for numerically recovering
spatially dependent diffusion coefficient $D(x)$ and potential coefficient $\sigma(x)$
for elliptic problems.  Let $\Omega\subset\mathbb{R}^d(d=1,2,3)$ be a convex polyhedral domain with a
boundary $\partial\Omega$.
We consider the following elliptic boundary value problem:
\begin{equation}\label{eqn:PDE-elliptic}
	\left\{\begin{aligned}
		-\nabla\cdot(D\nabla u) + \sigma u &= f,&&  \mbox{in }\Omega ,\\
		u &=  g,&&\mbox{on }\partial\Omega,
	\end{aligned}\right.
\end{equation}
where $f$ denotes a given source term and $g$ denotes the boundary data.
The solution to problem \eqref{eqn:PDE-elliptic} is denoted by $u(D,\sigma)$, to indicate its dependence on the coefficients $D$ and $\sigma$.
The inverse problem under consideration
is to recover exact coefficients $D^\dag(x)$ and $\sigma^\dag(x)$ from two interior measurements of solutions,
denoted by $u_1(D^\dag,\sigma^\dag)$ and $u_2(D^\dag,\sigma^\dag)$.
Here,  $u_i(D^{\dag},\sigma^{\dag})$ be the solution of the elliptic problem \eqref{eqn:PDE-elliptic} with source function $f_i$ and boundary data $g$.
Besides, we assume that the empirical observation $z_i^{\delta}$ is noisy with level $\delta$, i.e.,
\begin{align}\label{eq:noise_level}
	\lVert u_i(D^\dag,\sigma^\dag)-z_i^{\delta}\rVert_{L^2(\Omega)}\le \delta\quad \text{for}~~i=1,2.
\end{align}
Throughout, the diffusion coefficient and potential coefficient are respectively sought within the following admissible sets
\begin{align}\label{eq:admissible_set}
	\mathcal{A}_D=\{D\in W^{1,2}(\Omega):0< \underline{c}_D\le D\le \bar{c}_D \text{ a.e. in } \Omega\}\text{ and }\mathcal{A}_{\sigma}=\{\sigma\in L^{\infty}(\Omega):0\le \sigma\le \bar{c}_\sigma \text{ a.e. in } \Omega\}.
\end{align}
for some positive constants $\underline{c}_D, \bar{c}_D, \bar{c}_\sigma > 0$.

This inverse problem is representative among nonlinear parameter identification problems, and due to
the availability of internal measurements, high-resolution reconstructions are expected. For example,
problem \eqref{eqn:PDE-elliptic} is often used to model the behaviour of a confined inhomogenous aquifer, where $u$
represents the piezometric head, $f$ is the recharge, $D$ is hydraulic conductivity (or transmissivity
in the two-dimensional case) and $\sigma$ represents the adsorption; see \cite{Yeh:1986} for parameter
identifications in hydrology. A wide variety of coupled-physics imaging modalities in medical imaging also
leads to similar inverse problems, but the data are of slightly different form; See \cite{BalUhlmann:2013}
for several important stability results.

The case of numerically reconstructing one single parameter $D$ (with $\sigma=0$),
from the internal observation $u(x)$ in $\Omega$, 
has been extensively studied in the existing literature. See the recent work \cite{Bonito:2017} (and references therein)
on the H\"{o}lder stability of the elliptic inverse problems.
Due to the ill-posed nature of inverse problems, regularization, especially Tikhonov regularization, is customarily
employed for constructing numerical approximations (see, e.g., \cite{EnglHankeNeubauer:1996,ItoJin:2015}).
Commonly used stabilizing terms include $W^{1,2}(\Omega)$ and total variation semi-norms, which are suitable for
recovering smooth and nonsmooth diffusion coefficients, respectively.
The well-posedness and convergence (with respect to the noise level) was studied
\cite{Gutman:1990,Acar:1993,ChenZou:1999,KeungZou:1998}.
In practice, the regularized formulations are further discretized, and
the discretization necessarily introduces additional errors, which impacts the reconstruction quality.
Several studies  \cite{Gutman:1990,KeungZou:1998} have analyzed the convergence with respect to the discretization
parameter(s). See also \cite{Falk:1983,Karkkainen:1997,AlJamalGockenbach:2012,Richter:1981,KohnLowe:1988, WangZou:2010,DeckelnickHinze:2012,JinZhou:SINUM2021,ChenZhangZou:2022}
for error bounds of approximate solutions in different scenarios.

In this project, we target the simultaneous reconstruction of the diffusion coefficient $D$ and potential coefficient $\sigma$ in equation \eqref{eqn:PDE-elliptic}. This is done using two internal observations $u_1$ and $u_2$, which correspond to different source terms but share the same boundary data $g$.
It is important to highlight that a similar problem, which involves reconstructing two parameters in equation \eqref{eqn:PDE-elliptic} (with $f=0$) from two internal observations $\sigma u_1$ and $\sigma u_2$ (corresponding to distinct boundary data), has been systematically investigated in previous studies \cite{Bal:2010, BalRen:2011, BalUhlmann:2013}. This problem emerges in the context of quantitative photo-acoustic tomography in its diffusive regime.
In the aforementioned studies, the two parameters were assumed to be known at the boundary, a prerequisite for constructing a reconstruction algorithm and proving uniqueness. Additionally, the measurements were assumed to satisfy $|\nabla(u_1/u_2)| \ge \kappa>0$ almost everywhere in $\Omega$. Interestingly, these assumptions are not required in the current setting. Our investigation will address several critical aspects of the inverse problem: the conditional stability of the reconstruction, the development of an efficient numerical algorithm, and a discrete numerical scheme with a provable error estimate. One of the key challenges in this coupled problem is the appropriate selection of function spaces for the analysis, such as for conditional stability estimates, due to the diverse degree of smoothing of the forward map.
Our approach utilizes several technical tools, including the weighted stability estimate and energy technique with specific test functions \cite{Bonito:2017}. Notably,
the proposed approach differs significantly from existing ones, as the analysis naturally leads to the derivation of rigorous error bounds on the discrete approximations.

This paper also extends the argument to the parabolic equation
\begin{equation}\label{eqn:PDE-parab}
	\left\{
	\begin{aligned}
		\partial_t u-\nabla \cdot (D\nabla u)+\sigma u&=f,\quad \text{in }\Omega\times (0,T]\\
		u&=g,\quad \text{on }\partial\Omega\times (0,T]\\
		u(0)&=u_0,\quad \text{in }\Omega.
	\end{aligned}
	\right.
\end{equation}
We aim to identify the exact diffusion coefficient $D^\dag$ and the exact potential $\sigma^\dag$ from
the observation of $u(x,t)$ for $(x,t) \in (T_i - \theta, T_i] \times \Omega$ with $i=1,2$.
Here $T_1$ and $T_2$ denote two distinct time levels, and $\theta$ is a fixed small constant.
We assume that $D^\dag$ and $\sigma^{\dag}$ belong to the admissible sets $\A_D$ and $\A_\sigma$ respectively, defined in \eqref{eq:admissible_set}
and the empirical observation $z^{\delta}$ is noisy with level $\delta$, i.e.,
\begin{align}\label{eq:noise_level_para}
	\| u(D^\dag,\sigma^{\dag})-z^{\delta}\|_{L^{\infty}(T_i-\theta,T_i;L^2(\Omega))}\le \delta, \qquad i=1,2.
\end{align}
The error estimate for the numerical recovery of the single parameter has been extensively studied in different scenarios.
See e.g., \cite{JinZhou:SINUM2021,WangZou:2010} for inverse conductivity problems
and \cite{KaltenbacherRundell:2020a,JinLuQuanZhou:2023,ZhangZhangZhou:2022SINUM} for inverse potential problems.
In \cite{KaltenbacherRundell:2020a}, Kaltenbacher and Rundell analyzed the
simultaneous recovery of $\sigma^\dag$ and $D^\dag$ in one dimensional diffusion equations using the
spatial measurement $u(T)$ for two different sets of boundary conditions. The restriction on the one dimension is due to the use of the embedding $W^{1,2}\II \hookrightarrow L^\infty\II $.
In this project, we consider higher dimensional cases and use the interior observation of a single solution in $(T_i - \theta, T_i]$, $i=1,2$.
Note that the coupled nonlinear inverse problem does not admits unique recovery in general,
even for the one-dimensional case.
See a simple counterexample in the beginning of Section \ref{sec:parab}.
Therefore, such the recovery highly relies on the choice of the problem data.
We will investigate the conditional stability of the reconstruction, develop
a decoupled numerical algorithm to identify two parameters sequentially, and propose a completely discrete scheme with provable error bounds.
The argument employs some technical arguments, including decoupling the original problem into two single-parameter identification problems \cite{BalRen:2011,KaltenbacherRundell:2020a},
exploring weighted $L^2$ stability estimates by an energy argument with special test functions \cite{Bonito:2017,JinLuQuanZhou:2023}
and applying numerical analysis for the direct problems \cite{Thomee:2006}.

The rest of the paper is organized as follows. In Section \ref{sec:ellip-stab}, we show the H\"older type stability of the inverse problem for the elliptic equations,
under several positivity conditions which could be fulfilled. Then the stability estimate further motivates a decoupled recovery algorithm and the error analysis of the discrete approximation, that will be presented in Section \ref{sec:ellip-fem}. In Section \ref{sec:parab}, we extend our argument to the parabolic equation.
Numerical experiments will be presented in Section \ref{sec:numer}.  Throughout, we denote by $W^{k,p}(\Omega)$  the standard Sobolev spaces of order $k$ for any integer $k\geq0$ and real $p\geq1$, equipped with the norm $\|\cdot\|_{W^{k,p}(\Omega)}$. Moreover, we write $L^p(\Omega)$ with the norm $\|\cdot\|_{L^p(\Omega)}$ if $k=0$.
The spaces on the boundary $\partial\Omega$ are defined similarly. The notation $(\cdot,\cdot)$ denotes the standard $L^2(\Omega)$ inner product.
We denote by $c$ and $C$ generic constants not necessarily the same at each occurrence but it is always independent of the noise level, the discretization parameters and the penalty parameter.

\section{Conditional stability of inversion for elliptic equation}\label{sec:ellip-stab}
In this section, we aim to derive a conditional stability estimate for the inverse problem, which involves identifying both the diffusion and potential coefficients in an elliptic equation using two internal observations.

To accomplish this, we must first establish an assumption related to the problem data.

\begin{assumption}\label{ass:ellip}
	The source and boundary terms satisfy following properties
	\begin{itemize}
		\item[(i)] The source terms $f_1, f_2\in L^{\infty}(\Omega)$ and the boundary data $g\in W^{\frac32,2}(\partial\Omega)\cap W^{1,\infty}(\partial\Omega)$.
		\item[(ii)] The source terms $f_1, f_2\ge 0 $ and the boundary data $g \ge c_g >0$.
		\item[(iii)] The exact diffusion coefficient $D^\dag \in \A_D \cap W^{1,\infty}\II\cap W^{2,2}\II$ and the exact potential coefficient $\sigma \in \A_\sigma $.
	\end{itemize}
\end{assumption}

Under Assumption \eqref{ass:ellip}, the elliptic equation \eqref{eqn:PDE-elliptic} with source term $f_i$ ($i=1,2$) and boundary data $g$ admits a unique solution $u_i = u_i(D^\dag, \sigma^\dag)$ such that
\begin{equation}\label{eqn:reg-ellip}
	u_1, u_2 \in  W^{2,2}(\Omega)\cap W^{1,\infty}(\Omega).
\end{equation}
Moreover, by the strong maximum principle of the elliptic equation \cite[Section 6.4]{evans:book2022partial}, we conclude that there exists a constant $\underline{c}_u$ depending on $D^\dag$, $\sigma^\dag$, $g$ and $\Omega$, but independent of $f_1, f_2$ such that
\begin{equation}\label{eqn:pos-ellip}
	u_1, u_2   \ge \underline{c}_u  > 0.
\end{equation}

\subsection{Stability estimate for the recovery of diffusion coefficient}\label{subsec:stability_D}
To begin with, we eliminate the potential coefficient $\sigma$ and recover the diffusion coefficient $D$ from a reformulated elliptic problem.
Motivated by the idea in \cite{BalRen:2011}, we multiply the equation for $u_1$ by $u_2$, and the equation for $u_2$ by $u_1$,
and subtract these two relations. As a result, we eliminate the potential $\sigma$ and obtain
\begin{equation}\label{eq:elliptic_diffusion}
	\left\{
	\begin{aligned}
		-\nabla \cdot \left(Du_1^2\nabla\left( \frac{u_2}{u_1}-1\right)\right)&=f_2u_1-f_1u_2,\quad \text{in }\Omega\\
		\frac{u_2}{u_1}-1&=0,\quad \text{on }\partial\Omega
	\end{aligned}
	\right.
\end{equation}
Let $w:=\frac{u_2}{u_1}-1 $, $q:=Du_1^2 $ and $F:=f_2u_1-f_1u_2 $, hence the elliptic problem \eqref{eq:elliptic_diffusion} can be written as
\begin{equation}\label{eq:elliptic_diffusion_0_bdry}
	\left\{
	\begin{aligned}
		-\nabla \cdot \left( q\nabla w\right)&=F,\quad \text{in }\Omega\\
		w&=0,\quad \text{on }\partial\Omega
	\end{aligned}
	\right.
\end{equation}
Let the diffusion coefficient $D$, the potential coefficient $\sigma$, the source terms $f_i$ ($i=1,2$) and the boundary data $g$
satisfy Assumption \ref{ass:ellip}. Then $q=Du_1^2$ is uniformly bounded and strictly positive, and hence
we define the following admissible set
\begin{align}\label{eq:admissible_set_q}
	\mathcal{A}_q=\{q\in W^{1,2}(\Omega):0< \underline{c}_{q}\le q\le \bar{c}_{q} \text{ a.e. in } \Omega\}.
\end{align}
Note that the lower bound $\underline{c}_{q}\ge \underline{c}_D \underline{c}_u^2$ and the upper bound $\bar c_q$ depends on  $D,\sigma,f_1$ and $g$.
Meanwhile, note that $Du_1^2 \in W^{2,2}\II\cap W^{1,\infty}\II$ and $f_2u_1-f_1u_2 \in L^\infty\II$.
Consequently, we make the following assumption.
\begin{assumption}\label{ass:ellip-2}
	The exact diffusion coefficient $q^{\dag} = D^\dag|u_1(D^\dag,\sigma^\dag)|^2 \in W^{2,2}\II\cap W^{1,\infty}\II\cap \mathcal{A}_q$ and source term $F=f_2u_1(D^\dag,\sigma^\dag)-f_1u_2 (D^\dag,\sigma^\dag)\in L^{\infty}\II$.
\end{assumption}

Under Assumption \ref{ass:ellip-2}, we deduce that $w\in W_0^{1,2}\II\cap W^{2,2}\II\cap W^{1,\infty}(\Omega)$. This allows us to present the following conditional stability results in both weighted and standard $L^2(\Omega)$ norms, which play a pivotal role in the numerical analysis in Section \ref{sec:ellip-fem}.
The proof of these results modifies the proof of \cite[Theorem 2.2]{Bonito:2017}, incorporating a perturbed source term $\tilde F$.
\begin{theorem}\label{thm:stab-Du2}
Suppose that $F,\tilde F\in L^{\infty}\II$, $q,\tilde q \in \A_q$, and $q$ satisfies Assumption \ref{ass:ellip-2}.
Also, suppose the $W^{1,2}\II$-norm of $q$ and $\tilde q$ are bounded by a generic constant $C$.
Let $w$ be the solution of \eqref{eq:elliptic_diffusion_0_bdry} with the diffusion coefficient $q$ and source $F$, and $\tilde w$ as the solution with the diffusion coefficient $\tilde q$ and source $\tilde F$. Under these conditions, the following holds:
	\begin{align*}
		\int_{\Omega}\frac{(q-\tilde q)^2}{q^2}\left(q\left|\nabla w\right|^2+F w\right)\d x\le c\left(\| w-\tilde w\|_{W^{1,2}(\Omega)}+ \|F-\tilde F\|_{L^2(\Omega)}\right).
	\end{align*}
	Moreover, if  the following positive condition holds
	\begin{align}\label{eq:positive_condition}
		(q\left|\nabla w \right|^2+Fw)(x)\ge c \, \mathrm{dist}(x,\partial \Omega)^{\beta} \quad a.e. \text{ on }\Omega
	\end{align}
	for some generic constants $\beta\ge 0$ and $c>0$. Then the following estimate holds
	\begin{align}\label{eq:estimate_Du_1^2}
		\| q-\tilde q\|_{L^2(\Omega)}\le c\left(\lVert w-\tilde w\|_{W^{1,2}(\Omega)}+\lVert F-\tilde F\|_{L^2(\Omega)}\right)^{\frac{1}{2(1+\beta)}}
	\end{align}
\end{theorem}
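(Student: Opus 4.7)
The strategy is to derive the weighted inequality by subtracting the weak forms of \eqref{eq:elliptic_diffusion_0_bdry} for $w$ and $\tilde w$, testing the resulting identity against a carefully chosen admissible function that generates the target weight $(q-\tilde q)^2/q^2$, and pairing it with a second test of the $w$-equation in order to cancel a residual that cannot be controlled directly. The pointwise-weighted estimate is then upgraded to a standard $L^2(\Omega)$ Hölder bound through a boundary-layer splitting argument that exploits the positivity condition \eqref{eq:positive_condition}.

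The starting identity, valid for every $\psi\in W^{1,2}_0(\Omega)$, is
\[
\int_\Omega (q-\tilde q)\nabla w\cdot\nabla\psi\,\mathrm dx = \int_\Omega (F-\tilde F)\psi\,\mathrm dx - \int_\Omega \tilde q\,\nabla(w-\tilde w)\cdot\nabla\psi\,\mathrm dx.
\]
I would specialize to $\psi_1=\tfrac{q-\tilde q}{q}w$; expanding $\nabla\psi_1$ produces the target $\int_\Omega\tfrac{(q-\tilde q)^2}{q}|\nabla w|^2\,\mathrm dx$ plus an uncontrolled residual of the form $\int_\Omega (q-\tilde q)\,w\,\nabla w\cdot\nabla\bigl(\tfrac{q-\tilde q}{q}\bigr)\,\mathrm dx$. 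The trick is to generate a second copy of this residual, with the opposite sign, by testing the weak form of the $w$-equation itself against $\phi=\tfrac{(q-\tilde q)^2}{q^2}w\in W^{1,2}_0(\Omega)$ and using the chain-rule identity $\nabla\bigl(\tfrac{(q-\tilde q)^2}{q^2}\bigr)=2\tfrac{q-\tilde q}{q}\nabla\bigl(\tfrac{q-\tilde q}{q}\bigr)$. Summing the two identities cancels the residual exactly and leaves the clean formula
\[
\int_\Omega \frac{(q-\tilde q)^2}{q^2}\bigl(q|\nabla w|^2 + Fw\bigr)\,\mathrm dx = 2\int_\Omega (F-\tilde F)\psi_1\,\mathrm dx - 2\int_\Omega \tilde q\,\nabla(w-\tilde w)\cdot\nabla\psi_1\,\mathrm dx.
\]
Because $w\in W^{1,\infty}(\Omega)$, $q\ge\underline c_q$, and $\|q\|_{W^{1,2}(\Omega)},\|\tilde q\|_{W^{1,2}(\Omega)}\le C$, both $\|\psi_1\|_{L^2(\Omega)}$ and $\|\nabla\psi_1\|_{L^2(\Omega)}$ are bounded by an absolute constant, and Cauchy--Schwarz immediately yields the first assertion.

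For the Hölder $L^2$ bound, set $\varepsilon=\|w-\tilde w\|_{W^{1,2}(\Omega)}+\|F-\tilde F\|_{L^2(\Omega)}$ and $d(x)=\mathrm{dist}(x,\partial\Omega)$. Combining the first assertion with \eqref{eq:positive_condition} and using $q\le\bar c_q$ gives $\int_\Omega (q-\tilde q)^2\,d(x)^\beta\,\mathrm dx\le C\varepsilon$. Splitting $\Omega$ along $\{d=\lambda\}$ leads to
\[
\|q-\tilde q\|_{L^2(\Omega)}^2 \le \lambda^{-\beta}\int_\Omega (q-\tilde q)^2 d^\beta\,\mathrm dx + \|q-\tilde q\|_{L^\infty(\Omega)}^2\,|\{d<\lambda\}| \le C\bigl(\lambda^{-\beta}\varepsilon + \lambda\bigr),
\]
where $|\{d<\lambda\}|\le C\lambda$ follows from the Lipschitz regularity of the boundary of the convex polyhedron. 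Optimizing over $\lambda\sim\varepsilon^{1/(1+\beta)}$ produces the claimed exponent $1/(2(1+\beta))$.

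The hard part is the algebraic cancellation of the gradient-of-weight residual $\int(q-\tilde q)\,w\,\nabla w\cdot\nabla((q-\tilde q)/q)$: a direct integration by parts of that term would force a contribution proportional to $\|q-\tilde q\|_{L^2(\Omega)}^2$ onto the right-hand side, which is useless for stability and would also destroy the linear dependence on $(\|w-\tilde w\|_{W^{1,2}(\Omega)}+\|F-\tilde F\|_{L^2(\Omega)})$ that is essential for the subsequent numerical analysis in Section \ref{sec:ellip-fem}. Verifying the admissibility of $\psi_1,\phi\in W^{1,2}_0(\Omega)$ under the mixed regularity of the data ($w\in W^{1,\infty}$ but $q,\tilde q\in W^{1,2}(\Omega)\cap L^\infty(\Omega)$ only) is the other routine but delicate point.
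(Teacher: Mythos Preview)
Your proposal is correct and follows essentially the same route as the paper. The paper also subtracts the weak forms to get the identity for $\int_\Omega(q-\tilde q)\nabla w\cdot\nabla v$, and then tests the $w$-equation against $\frac{q-\tilde q}{q}v$; with $v=\psi_1=\frac{q-\tilde q}{q}w$ this is exactly your $\phi=\frac{(q-\tilde q)^2}{q^2}w$, the same cancellation of the residual $\int_\Omega(q-\tilde q)\,w\,\nabla w\cdot\nabla\bigl(\tfrac{q-\tilde q}{q}\bigr)$ occurs, and the boundary-layer splitting for the H\"older estimate is identical. One minor quibble: the correct linear combination is $2\times(\text{first identity})-(\text{second identity})$ rather than a straight sum, but your stated final formula is right.
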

\begin{proof}
	Define $\xi=q-\tilde q$. For any $v\in W^{1,2}_0(\Omega)$, integration by parts in \eqref{eq:elliptic_diffusion} yields
	\begin{align}\label{eq:xi_nabla_nabla_1}
		\int_{\Omega}\xi \nabla w\cdot\nabla v \d x=\int_{\Omega}\tilde q\nabla\left(\tilde w- w \right)\cdot\nabla v+(F- \tilde F)v\d x.
	\end{align}
	Besides, multiplying $\xi v/q$ on both sides of \eqref{eq:elliptic_diffusion} and applying integration by parts, we derive
	\begin{align*}
		\int_{\Omega}\frac{\xi v}{q}F \d x&=\int_{\Omega} q\nabla w\cdot\nabla\frac{\xi v}{q} =\int_{\Omega}q v\nabla w\cdot\nabla\frac{\xi}{q}+\int_{\Omega}q\frac{\xi}{q}\nabla w\cdot\nabla v,
	\end{align*}
	and hence
	\begin{align}\label{eq:xi_nabla_nabla_2}
		\int_{\Omega}\xi \nabla w\cdot\nabla v \d x=\frac{1}{2}\int_{\Omega}q \frac{\xi}{q}\nabla w\cdot\nabla v-\frac{1}{2}\int_{\Omega}q v\nabla w\cdot\nabla \frac{\xi}{q} +\frac{1}{2}\int_{\Omega}\frac{\xi v}{q}F\d x.
	\end{align}
	Now we choose the test function $v= \xi w / q $.
	Note that $q$ satisfies Assumption \ref{ass:ellip}, $\tilde q\in \mathcal{A}_q$ and $F,\tilde F\in L^{\infty}\II$. As a result, we conclude that $v \in W^{1,2}_0\II$ with
	$$  \| v \|_{L^{2}\II}  \le  \|  (q-\tilde q) w / q   \|_{L^2\II} \le \frac{2 \bar{c}_q }{\underline{c}_q} \| w \|_{L^2\II}$$
	and
	\begin{align*}
		\| \nabla v \|_{L^2\II}^2 & \le \Big\| \frac{q\nabla[(q-\tilde q) w] - (q-\tilde q) w\nabla q}{q^2} \Big\|_{L^2\II}  \\
		&\le  \frac{1}{\underline{c}_q^2}\Big( \bar{c}_q \|w \nabla (q-\tilde q) \|_{L^2} + \bar{c}_q \|(q-\tilde q) \nabla w \|_{L^2\II} + 2  \bar{c}_q \| w \|_{L^\infty\II} \| \nabla q \|_{L^2\II}  \Big) \\
		&\le \frac{1}{\underline{c}_q^2}\Big( \bar{c}_q \|w\|_{L^\infty\II} (\|\nabla q\|_{L^2\II}+ \|\nabla \tilde q\|_{L^2\II})  + 2\bar{c}_q^2 \|  \nabla w \|_{L^2\II} + 2  \bar{c}_q \| w \|_{L^\infty\II} \| \nabla q \|_{L^2\II} . \Big)
	\end{align*}
	With this test function $v$, a direct computation yields that the first two terms on the right hand side of \eqref{eq:xi_nabla_nabla_2} is equal to $\frac{1}{2}\int_{\Omega}\frac{\xi^2}{q}\left|\nabla w\right|^2\d x$. Hence, The relation \eqref{eq:xi_nabla_nabla_1} and \eqref{eq:xi_nabla_nabla_2} yields
	\begin{align*}
		\frac{1}{2}\int_{\Omega}\frac{\xi^2}{q^2}\left(q\left|\nabla w\right|^2+F w \right)\d x
		& = \int_{\Omega}\tilde q \nabla\left(\tilde w- w\right)\cdot\nabla v+\left( F-\tilde F\right)v\d x\\
		& \le c\left(\lVert w-\tilde w\|_{W^{1,2}(\Omega)}+\lVert F-\tilde F\|_{L^2(\Omega)}\right),
	\end{align*}
	With the positive condition \eqref{eq:positive_condition}, we divide the domain $\Omega$ into two parts, $\Omega_{\rho}=\{x\in \Omega:\mathrm{dist}(x,\partial\Omega)\ge \rho\}$, $\Omega_{\rho}^c=\Omega\setminus\Omega_{\rho}$.
	Thus we have
	\begin{equation*}
	\begin{aligned}
		\frac{1}{\underline{c}_q^2}  \int_{\Omega_{\rho}} |\xi|^2\d x &\le \int_{\Omega_{\rho}} \left(\frac{\xi}{q}\right)^2\d x
		\le c\rho^{-\beta}\int_{\Omega_{\rho}} \left(\frac{\xi}{q}\right)^2\rho^{\beta}\d x
		\le c\rho^{-\beta}\int_{\Omega_{\rho}} \left(\frac{\xi}{q}\right)^2 \mathrm{dist}(x,\partial\Omega)^{\beta} \d x  \\
		&\le c\rho^{-\beta}\int_{\Omega_{\rho}} \left(\frac{\xi}{q}\right)^2 \left( q\left|\nabla w \right|^2+ F w \right) \d x
		\le c\rho^{-\beta}\left(\lVert w-\tilde w\|_{W^{1,2}(\Omega)}+\lVert F-\tilde F\|_{L^2(\Omega)}\right).
	\end{aligned}
	\end{equation*}
	On the other hand,
	$\int_{\Omega_{\rho}^c} \xi^2\d x\le c|\Omega_{\rho}^c|\le c\rho.$
	Then the desired result follows by balancing the above two estimates with $\rho$.
\end{proof}
\vskip5pt

\begin{remark}\label{rmk:positive_condtion_hold}
The positive condition \eqref{eq:positive_condition} can be verified by properly selecting the source functions $f_1$ and $f_2$. For example, set $f_1\equiv 0$ and ensure $f_2\ge c_f > 0$. Given the positive lower bound of $u_1$ as presented in \eqref{eqn:pos-ellip}, we can deduce that $f_2u_1 \ge c_f \underline{c}_u > 0$.
Consequently, we obtain $F=f_2u_1-f_1u_2 = f_2u_1 \ge c_f \underline{c}_u > 0$. Hence, the positive condition \eqref{eq:positive_condition} is satisfied for $\beta=2$ according to \cite[Lemma 3.7]{Bonito:2017}.
In general, one can establish the positive condition \eqref{eq:positive_condition} with $\beta=2$ when $f_2$ is sufficiently large in comparison to $f_1$. For superior stability with $0\le \beta<2$, additional regularity assumptions are required for the domain and problem data \cite[Lemma 3.8]{Bonito:2017}.
\end{remark}\vskip5pt


Let $u_i$ ($\tilde u_i$) be the solution to the elliptic equation \eqref{eqn:PDE-elliptic} with diffusion coefficient $D$ ($\tilde D$),
potential coefficient $\sigma$ ($\tilde \sigma$), the boundary data $g$ and source functions $f_i$. Using the strict positivity  of $u_i$ and $\tilde u_i$ in \eqref{eqn:pos-ellip}
and the uniform boundedness of $u_i$ and $\tilde u_i$, we obtain
\begin{equation*}
	\begin{split}
		\| D-\tilde D \|_{L^2\II}&= \Big\| \frac{q}{u_1^2}-\frac{\tilde q}{\tilde u_1^2}\Big\|_{L^2\II}\le c \| q\tilde u_1^2-\tilde q u_1^2\|_{L^2\II}\\
		&\le c\Big(\| q\tilde u_1^2-  q u_1^2\|_{L^2\II}+\| q  u_1^2-\tilde q u_1^2\|_{L^2\II} \Big)\\
		&\le c\Big(\| \tilde u_1-   u_1\|_{L^2\II}+ \| q  -\tilde q\|_{L^2\II} \Big).
	\end{split}
\end{equation*}
Using Theorem \ref{thm:stab-Du2}, the positivity \eqref{eqn:pos-ellip} and solution regularity \eqref{eqn:reg-ellip} , we obtain
\begin{equation}\label{eqn:stab-D}
	\begin{split}
		\| D-\tilde D \|_{L^2\II}&\le c\Big(\| \tilde u_1-   u_1\|_{L^2\II}+  \big(\| w-\tilde w\|_{W^{1,2}(\Omega)}+\| F-\tilde F\|_{L^2(\Omega)}\big)^{\frac{1}{2(1+\beta)}}\Big)\\
		&\le c\Big(\| \tilde u_1-   u_1\|_{L^2\II}+  \big(   \| u_1-\tilde u_1\|_{W^{1,2}(\Omega)} +  \| u_2-\tilde u_2\|_{W^{1,2}(\Omega)} \big)^{\frac{1}{2(1+\beta)}}\Big)\\
		&\le c  \big(   \| u_1-\tilde u_1\|_{W^{1,2}(\Omega)} +  \| u_2-\tilde u_2\|_{W^{1,2}(\Omega)} \big)^{\frac{1}{2(1+\beta)}}.
	\end{split}
\end{equation}

\subsection{Stability estimate for   the recovery of potential coefficient}\label{ssec:sigma}
In the preceding section, we derived the stability for the recovery of the diffusion coefficient $D$. Now, we will shift our focus to the stability analysis for the potential coefficient $\sigma$. We introduce $\zeta :=u_2-u_1$, which satisfies the following elliptic problem:
\begin{equation}\label{eq:elliptic_potential_0_bdry}
	\left\{
	\begin{aligned}
		-\nabla \cdot \left(D \nabla \zeta\right)+\sigma \zeta&=f_2-f_1,\quad \text{in }\Omega,\\
		\zeta&=0,\quad \text{on }\partial\Omega.
	\end{aligned}
	\right.
\end{equation}
Then the next theorem provide a conditional stability for the identification of $\sigma$.

\begin{theorem}\label{thm:stability_sigma}
Assume that Assumptions \ref{ass:ellip} and \ref{ass:ellip-2} are valid. Let $\sigma, \tilde\sigma\in \mathcal{A}_{\sigma}$, with their $W^{1,2}\II$-norm being bounded by a generic constant $C$. Under these conditions, the following weighted estimate holds:
	\begin{align*}
		\lVert(\sigma-\tilde\sigma)\zeta\lVert_{L^2\II}\le c \Big(  \sum_{i=1}^2 \| u_i-\tu_i\|_{L^2\II}^2 + \sum_{i=1}^2 \| u_i-\tu_i\|_{W^{1,2}\II} +
		\| \tilde D-D\|_{L^2\II}\Big)^{\frac{1}{2}}.
	\end{align*}
	Moreover, if $f_2-f_1\ge c >0$ a.e. in $\Omega$, then for any compact subset $\Omega'\Subset \Omega$ with $\mathrm{dist}(\overline{\Omega'},\partial\Omega)>0$, there exists a positive constant $c$, depending on $\mathrm{dist}(\overline{\Omega'},\partial\Omega)$ and $D, \sigma$, such that
	\begin{align*}
		\lVert\sigma-\tilde\sigma\lVert_{L^2(\Omega')} \le  c \Big(  \sum_{i=1}^2 \| u_i-\tu_i\|_{L^2\II}^2 + \sum_{i=1}^2 \| u_i-\tu_i\|_{W^{1,2}\II} +
		\| \tilde D-D\|_{L^2\II}\Big)^{\frac{1}{2}}.
	\end{align*}
\end{theorem}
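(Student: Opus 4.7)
My plan parallels the strategy used for Theorem \ref{thm:stab-Du2}, now applied to the reformulated equation \eqref{eq:elliptic_potential_0_bdry} for $\zeta = u_2 - u_1$. I would first introduce the analogous quantity $\tilde\zeta := \tu_2 - \tu_1$, which solves the same boundary value problem with coefficients $\tilde D, \tilde\sigma$ and source $f_2-f_1$. Subtracting the two equations and splitting the products $D\nabla\zeta - \tilde D\nabla\tilde\zeta = D\nabla(\zeta-\tilde\zeta) + (D-\tilde D)\nabla\tilde\zeta$ and $\sigma\zeta - \tilde\sigma\tilde\zeta = (\sigma-\tilde\sigma)\zeta + \tilde\sigma(\zeta-\tilde\zeta)$ yields the pointwise identity
\begin{equation*}
(\sigma-\tilde\sigma)\zeta = \nabla\cdot\bigl(D\nabla(\zeta-\tilde\zeta)\bigr) + \nabla\cdot\bigl((D-\tilde D)\nabla\tilde\zeta\bigr) - \tilde\sigma(\zeta-\tilde\zeta),
\end{equation*}
which isolates $\sigma-\tilde\sigma$ (weighted by $\zeta$) on the left. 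Since $\zeta-\tilde\zeta = (u_2-\tu_2)-(u_1-\tu_1)$, its $L^2\II$ and $W^{1,2}\II$ norms are already controlled by the target right-hand side.

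To establish the first (weighted) estimate, I would test the identity above against $(\sigma-\tilde\sigma)\zeta$ in $L^2\II$. By the hypotheses, $\sigma,\tilde\sigma \in W^{1,2}\II\cap L^\infty\II$ with uniformly bounded norms, and $\zeta\in W^{1,2}_0\II\cap L^\infty\II$ by the regularity \eqref{eqn:reg-ellip}. Hence the product $(\sigma-\tilde\sigma)\zeta$ lies in $W^{1,2}_0\II$ with
\begin{equation*}
\|\nabla[(\sigma-\tilde\sigma)\zeta]\|_{L^2\II} \le \|\zeta\|_{L^\infty\II}\|\nabla(\sigma-\tilde\sigma)\|_{L^2\II} + \|\sigma-\tilde\sigma\|_{L^\infty\II}\|\nabla\zeta\|_{L^2\II} \le C,
\end{equation*}
so integration by parts on the two divergence terms is justified and carries no boundary contribution because $\zeta$ vanishes on $\partial\Omega$. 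Cauchy–Schwarz then bounds the divergence contributions by $C(\|\nabla(\zeta-\tilde\zeta)\|_{L^2\II} + \|D-\tilde D\|_{L^2\II})$, while the last term gives $C\|(\sigma-\tilde\sigma)\zeta\|_{L^2\II}\|\zeta-\tilde\zeta\|_{L^2\II}$; Young's inequality absorbs the latter into the left side, yielding $\|(\sigma-\tilde\sigma)\zeta\|_{L^2\II}^2 \le c\bigl(\|\nabla(\zeta-\tilde\zeta)\|_{L^2\II} + \|D-\tilde D\|_{L^2\II} + \|\zeta-\tilde\zeta\|_{L^2\II}^2\bigr)$. Taking the square root and substituting the bounds on $\zeta-\tilde\zeta$ gives the first conclusion.

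For the second estimate, the stronger hypothesis $f_2-f_1\ge c>0$ makes the right-hand side of \eqref{eq:elliptic_potential_0_bdry} uniformly positive with zero boundary data. The weak maximum principle gives $\zeta\ge 0$, and the strong maximum principle then forces $\zeta>0$ in $\Omega$. To upgrade this to a quantitative interior bound $\zeta(x)\ge c_{\Omega'}>0$ on $\Omega'\Subset\Omega$, I would construct a positive subsolution $\phi$ solving $-\nabla\cdot(D\nabla\phi)+\sigma\phi = c$ with $\phi|_{\partial\Omega}=0$; comparison yields $\zeta\ge \phi$, and $c_{\Omega'} := \min_{\overline{\Omega'}}\phi>0$ depends only on $\mathrm{dist}(\overline{\Omega'},\partial\Omega)$, $D$, and $\sigma$ (alternatively, an interior Harnack inequality for $\zeta$ produces the same quantitative bound). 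Then the pointwise inequality $|\sigma-\tilde\sigma|^2 \le c_{\Omega'}^{-2}|(\sigma-\tilde\sigma)\zeta|^2$ on $\Omega'$, integrated and combined with the weighted estimate already proved, delivers the second conclusion.

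The main obstacle is precisely this last step: converting the qualitative positivity of $\zeta$ into a quantitative lower bound whose dependence on $\mathrm{dist}(\overline{\Omega'},\partial\Omega)$, $D$, and $\sigma$ is explicit. The first inequality is relatively routine bookkeeping with Cauchy–Schwarz and Young's inequality once the pointwise identity is at hand, but one must be careful to invoke the $W^{1,2}\II$ bound on $\sigma,\tilde\sigma$ (an assumption that is not in the original $\A_\sigma$) at exactly the point where the divergence terms are integrated by parts.
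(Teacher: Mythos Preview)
Your approach is essentially the same as the paper's: test the weak identity against $v=(\sigma-\tilde\sigma)\zeta\in W^{1,2}_0\II$, bound the resulting terms by Cauchy--Schwarz, absorb the $\|\zeta-\tilde\zeta\|_{L^2}\|v\|_{L^2}$ piece with Young, and then invoke the strong maximum principle for the interior estimate. There is, however, one genuine slip in your splitting.

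You write $D\nabla\zeta-\tilde D\nabla\tilde\zeta = D\nabla(\zeta-\tilde\zeta)+(D-\tilde D)\nabla\tilde\zeta$, so after integration by parts the second divergence term becomes $\int_\Omega (D-\tilde D)\,\nabla\tilde\zeta\cdot\nabla v$. To bound this by $C\|D-\tilde D\|_{L^2\II}$ via Cauchy--Schwarz you need $\nabla\tilde\zeta\cdot\nabla v\in L^2\II$ with a uniform bound, which in turn requires $\nabla\tilde\zeta\in L^\infty\II$. But $\tilde\zeta$ is the solution associated with $\tilde D\in\mathcal A_D$, and nothing in the hypotheses gives $\tilde D\in W^{1,\infty}\II$; you only have $\nabla\tilde\zeta\in L^2\II$. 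The product $\nabla\tilde\zeta\cdot\nabla v$ is then merely $L^1$, and your claimed bound ``$C\|D-\tilde D\|_{L^2\II}$'' does not follow.

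The paper avoids this by using the opposite splitting $\tilde D\nabla\tilde\zeta - D\nabla\zeta = \tilde D\nabla(\tilde\zeta-\zeta)+(\tilde D-D)\nabla\zeta$, placing the factor $(D-\tilde D)$ against $\nabla\zeta$. Since $\zeta=u_2-u_1$ is built from the \emph{exact} coefficients, Assumption~\ref{ass:ellip} and \eqref{eqn:reg-ellip} give $\zeta\in W^{1,\infty}\II$, and then $\|(\tilde D-D)\nabla\zeta\|_{L^2\II}\le\|\tilde D-D\|_{L^2\II}\|\nabla\zeta\|_{L^\infty\II}$ is legitimate. Swapping your splitting to this one fixes the argument completely; the rest of your proof (including the subsolution/Harnack route to the quantitative interior lower bound on $\zeta$) is fine and matches the paper's use of the strong maximum principle.
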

\begin{proof}
	Denote $\tilde{\zeta}$ be the solution of \eqref{eq:elliptic_potential_0_bdry} with coefficients $\tilde{D},\tilde{\sigma}$. For a test function $v\in W^{1,2}_0\II$, we consider  the $L^2\II$-inner product $\left((\sigma-\tilde\sigma)z,v\right)$. By integration by parts, we have
	\begin{align*}
		\left((\sigma-\tilde\sigma)\zeta,v\right)=&(\sigma \zeta,v)-(\tilde\sigma \zeta ,v)+(\tilde\sigma\tilde \zeta,v)-(\tilde\sigma\tilde \zeta,v)\\
		=&(\tilde D\nabla\tilde \zeta-D\nabla \zeta,\nabla v)+(\tilde\sigma(\tilde \zeta-\zeta),v).
	\end{align*}
	Now we take $v=(\sigma-\tilde\sigma)\zeta$. Recall that $\zeta\in W^{1,\infty}\II $, $\sigma,\tilde\sigma \in \A_\sigma$,
	and $\| \nabla \sigma\|_{L^2\II}, \| \nabla \tilde\sigma\|_{L^2\II}\le c$. Hence
	$$ \| \nabla v \|_{L^2\II} \le  (\|\nabla \sigma  \|_{L^2\II}+\|\nabla \tilde \sigma \|_{L^2\II})\|\zeta \|_{L^\infty\II} + 2 \bar{c}_\sigma\|\nabla \zeta\|_{L^\infty\II} \le c.$$
	As a result, noting that $D,\tilde D \in \A_D$, we obtain
	\begin{align*}
		|(\tilde D\nabla\tilde \zeta-D\nabla \zeta,\nabla v)|
		& \le \left( \lVert \tilde D(\nabla\tilde \zeta-\nabla \zeta)\|_{L^2\II}+\lVert(\tilde D-D)\nabla \zeta\|_{L^2\II}\right)\lVert \nabla v\|_{L^2\II}\\
		&\le c\Big( \bar{c}_D \|\nabla(\tilde \zeta- \zeta)\|_{L^2\II} +\|\tilde D-D\|_{L^2\II} \|\nabla \zeta \|_{L^\infty\II} \Big) \\
		&\le c\Big(  \sum_{i=1}^2 \| u_i-\tu_i\|_{W^{1,2}\II} +\lVert\tilde D-D\|_{L^2\II}\Big)
	\end{align*}
	and
	\begin{align*}
		|(\tilde\sigma(\tilde \zeta-\zeta),v)| &\le   \bar{c}_\sigma\|\tilde \zeta-\zeta\|_{L^2\II} \|v \|_{L^2\II} \le   c \sum_{i=1}^2 \| u_i-\tu_i\|_{L^2\II}^2 +  \frac12 \| v \|_{L^2\II}^2.
	\end{align*}
	Then we complete the proof of the first assertion.
	
	Since the source term in \eqref{eq:elliptic_potential_0_bdry} satisfying  $f_2 - f_1 \ge c >0$ in $\Omega$, then the  strong maximum principle \cite[Theorem 1]{Vazquez:1984} implies
	that for any $\Omega' \Subset \Omega$, there exists a positive constant $c$ depending on $\mathrm{dist}(\Omega',\partial\Omega)$ such that
	$\zeta(\sigma^\dag) \ge c > 0$ in $\Omega'$ , and consequently, the second assertion holds.
\end{proof}

\section{Finite element approximation and error analysis}\label{sec:ellip-fem}
In this section, we will introduce a numerical scheme aimed at reconstructing the diffusion coefficient $D^\dag$ and the potential coefficient $\sigma^\dag$. This is achieved using the output least-squares formulation. Taking inspiration from the stability estimate, we propose a decoupled algorithm that first recovers the diffusion coefficient $D^\dag$, followed by the reconstruction of the potential $\sigma^\dag$.

In our numerical scheme, we employ the standard Galerkin FEM \cite{BrennerScott:2008}.
Let $\mathcal{T}_h$ be a shape regular quasi-uniform triangulation of the domain $\Omega$ into $d$-simplexes,
denoted by $K$, with a mesh size $h$. Over $\mathcal{T}_h$, we define a continuous piecewise linear finite element
space $V_h$ by
\begin{align*}
	V_h=\{v_h\in W^{1,2}\II :v_h|_K \text{ is a linear function }\forall K\in \mathcal{T}_h\}
\end{align*}
and the space $V_{h}^0 = V_h\cap W^{1,2}_0\II $.
The following inverse inequality holds 
\begin{align}\label{eq:inverse_ineq}
	\lVert\varphi_h\|_{W^{1,2}\II}\le ch^{-1}\lVert \varphi_h\|_{L^2\II},\quad \forall\varphi_h\in V_h.
\end{align}
We define the $L^2$-projection $P_h:L^2\II\rightarrow V_h^0$ by
\begin{align*}
	(P_h\varphi,\chi)=(\varphi,\chi),\quad\forall\chi\in V_h^0.
\end{align*}
Note the operator $P_h$ satisfies the following error estimates \cite{Thomee:2006}: for any $s\in[1,2]$,
\begin{align}\label{eq:error_P_h}
	\lVert\varphi-P_h\varphi\|_{L^2\II}+\lVert\nabla(\varphi-P_h\varphi)\|_{L^2\II}\le h^s\lVert\varphi\|_{W^{s,2}\II},\quad\forall\varphi\in W^{s,2}\II\cap W^{1,2}_0\II.
\end{align}
Let $\mathcal{I}_h$ be the Lagrange interpolation operator associated with the finite element space $V_h$. It satisfies the following error estimate for $s=1,2$ and $1\le p\le \infty$ {(with $sp>d$ if $p>1$ and $sp\ge d$ if $p=1$)} \cite{BrennerScott:2008}:
\begin{align}\label{eq:error_I_h}
	\lVert\varphi-\mathcal{I}_h\varphi\|_{L^p\II}+\lVert\nabla(\varphi-\mathcal{I}_h\varphi)\|_{L^p\II}\le h^s\lVert\varphi\|_{W^{s,p}\II},\quad\forall\varphi\in W^{s,p}\II.
\end{align}

\subsection{Step one: numerically recover diffusion coefficient $D^\dag$}\label{subsec:recover_D}
Recall that the elliptic problem \eqref{eq:elliptic_diffusion_0_bdry} enables to recover the diffusion coefficient without the knowledge of potential.
Note that the exact solution $u_i^\dag:=u_i(D^\dag,\sigma^\dag)$, with $i=1,2$, is strictly positive with a fixed lower bound
(cf. \eqref{eqn:pos-ellip}). For ease of simplicity, we  assume  that the empirical observation $z_i^{\delta}$ satisfies the same positive lower bound.
We define
\begin{equation*}
	w^\delta(x) =\frac{z_2^{\delta}(x)}{z_1^{\delta}(x)}-1\quad \text{and}\quad  w^\dag(x) =\frac{u_2^{\dag}(x)}{u_1^{\dag}(x)}-1.
\end{equation*}
Using \eqref{eq:noise_level}, \eqref{eqn:reg-ellip} and \eqref{eqn:pos-ellip}, we derive
\begin{equation*}
	\begin{split}
		\| w^\delta - w^\dag\|_{L^2\II}  &= \Big\| \frac{z_1^{\delta}u_2^\dag-z_2^{\delta}u_1^\dag}{u_1^\dag z_1^{\delta}} \Big\|_{L^2\II}
		\le \Big\| \frac{z_1^{\delta}u_2^\dag-u_1^\dag u_2^\dag}{u_1^\dag z_1^{\delta}} \Big\|_{L^2\II}
		+ \Big\| \frac{u_1^{\dag} u_2^\dag-z_2^{\delta}u_1^\dag}{u_1^\dag z_1^{\delta}} \Big\|_{L^2\II} \\
		&\le \frac{1}{\underline{c}_u^2} \Big( \| z_1^{\delta}u_2^\dag-u_1^\dag u_2^\dag \|_{L^2\II} +  \|u_1^\dag u_2^\dag  - z_2^{\delta}u_1^\dag \|_{L^2\II}  \Big) \le c\delta.
	\end{split}
\end{equation*}
Moreover, we should also take care of the source term $F$ in \eqref{eq:elliptic_diffusion_0_bdry}
where the exact solutions $u_1$ and $u_2$ should be replaced with noisy observations.
Hence we define
$F^{\delta}:=f_2z_1^{\delta}-f_1z_2^{\delta}$ with
\begin{equation}\label{eqn:Fd}
	\| F-F^{\delta}\|_{L^2\II}\le \|(u_1-z_1^{\delta})f_2\|_{L^2\II}
	+\|(u_2-z_2^{\delta})f_1\|_{L^2\II}\le c\delta.
\end{equation}

We look for the numerical reconstruction of diffusion coefficient of the system \eqref{eq:elliptic_diffusion_0_bdry} in the admissible set $\mathcal{A}_{q,h}=\mathcal{A}_q\cap V_h$.
The finite element scheme reads
\begin{align}\label{eq:min_D_dis}
	\min_{q_h\in \mathcal{A}_{q,h}} J_{\alpha_1}(q_h)=\frac{1}{2}\lVert w_h(q_h)-w^{\delta}\|_{L^2\II}^2+\frac{\alpha_1}{2}\lVert \nabla q_h\|_{L^2\II}^2
\end{align}
where $w_h(q_h)\in V_h^0$ is a weak solution of
\begin{equation}\label{eq:min_D_dis_restriction}
	(q_h\nabla w_h,\nabla v_h)=(F^{\delta},v_h) \quad\forall \,v_h\in V_{h}^0.
\end{equation}
For any $\gamma,h>0$, there exists at least one minimizer  $q_h^*$ to problem \eqref{eq:min_D_dis}-\eqref{eq:min_D_dis_restriction}; see \cite{HinzeKaltenbacher:2018,Zou:1998} for related analysis
for the well-posedness and convergence.
Then our objective is to bound the error $q^{\dag}-q_h^*$, where $q^{\dag}=D^{\dag}u_1^2$ is the exact coefficient satisfying
$q^{\dag}\in W^{2,2}(\Omega)\cap W^{1,\infty}(\Omega)$ provided that Assumption \ref{ass:ellip-2} holds valid.
To this end, we state the following \textsl{a priori} estimate for $\| w_h(q_h^*)-w(q^{\dag}) \|_{L^2\II}$ and $  \|\nabla q_h^{*}\|_{L^2\II} $.

\begin{lemma}\label{lem:ellip-est-1}
Suppose that $q^\dag$ and $F$ satisfies Assumption \ref{ass:ellip-2}. Let $q_h^* \in \mathcal{A}_{q,h}$
be a minimizer of problem \eqref{eq:min_D_dis}-\eqref{eq:min_D_dis_restriction}. Then there holds
\begin{equation*}
	\begin{aligned}
 \| w_h(q_h^*)-w^{\delta}\|_{L^2\II}^2+\alpha_1 \|\nabla q_h^{*}\|_{L^2\II}^2  \le c (h^4 +\delta^2+\alpha_1),
	\end{aligned}
\end{equation*}
\end{lemma}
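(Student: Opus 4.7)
The plan is to exploit the minimizing property of $q_h^*$ by testing $J_{\alpha_1}$ against a carefully chosen competitor in the discrete admissible set $\mathcal{A}_{q,h}$. Since Assumption \ref{ass:ellip-2} gives $q^\dag\in W^{2,2}\II\cap W^{1,\infty}\II\cap\mathcal{A}_q$ and nodal interpolation preserves pointwise bounds, the Lagrange interpolant $\mathcal{I}_h q^\dag$ lies in $\mathcal{A}_{q,h}$. Thus
\begin{equation*}
J_{\alpha_1}(q_h^*)\le J_{\alpha_1}(\mathcal{I}_h q^\dag)=\tfrac{1}{2}\|w_h(\mathcal{I}_h q^\dag)-w^\delta\|_{L^2\II}^2+\tfrac{\alpha_1}{2}\|\nabla\mathcal{I}_h q^\dag\|_{L^2\II}^2,
\end{equation*}
and the task reduces to bounding the two right-hand side terms by $c(h^4+\delta^2)$ and $c\alpha_1$ respectively. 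The regularization term is immediate: by \eqref{eq:error_I_h}, $\|\nabla\mathcal{I}_h q^\dag\|_{L^2\II}\le\|\nabla q^\dag\|_{L^2\II}+\|\nabla(\mathcal{I}_h q^\dag-q^\dag)\|_{L^2\II}\le c$.

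For the residual term, I would introduce the continuous solution $w^\dag$ of \eqref{eq:elliptic_diffusion_0_bdry} with exact data $(q^\dag,F)$ and its standard Galerkin approximation $\tilde w_h\in V_h^0$ in the coefficient $q^\dag$ with source $F$, and split
\begin{equation*}
\|w_h(\mathcal{I}_h q^\dag)-w^\delta\|_{L^2\II}\le \|w_h(\mathcal{I}_h q^\dag)-\tilde w_h\|_{L^2\II}+\|\tilde w_h-w^\dag\|_{L^2\II}+\|w^\dag-w^\delta\|_{L^2\II}.
\end{equation*}
The last summand is $O(\delta)$ by the estimate already derived in the paragraph preceding the lemma. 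The middle summand is $O(h^2)$ by the classical FEM $L^2$ error bound, since Assumption \ref{ass:ellip-2} together with elliptic regularity yields $w^\dag\in W^{2,2}\II\cap W^{1,\infty}\II$.

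The crux is the first summand. Setting $e_h:=w_h(\mathcal{I}_h q^\dag)-\tilde w_h\in V_h^0$ and subtracting the two discrete weak formulations, one obtains, for every $v_h\in V_h^0$,
\begin{equation*}
(\mathcal{I}_h q^\dag\nabla e_h,\nabla v_h)=(F^\delta-F,v_h)+((q^\dag-\mathcal{I}_h q^\dag)\nabla\tilde w_h,\nabla v_h).
\end{equation*}
Testing with $v_h=e_h$ and invoking the uniform coercivity of $\mathcal{I}_h q^\dag$, Poincar\'e's inequality, the source perturbation bound \eqref{eqn:Fd}, and the factorization
\begin{equation*}
\|(q^\dag-\mathcal{I}_h q^\dag)\nabla\tilde w_h\|_{L^2\II}\le\|q^\dag-\mathcal{I}_h q^\dag\|_{L^2\II}\|\nabla\tilde w_h\|_{L^\infty\II}\le ch^2,
\end{equation*}
one deduces $\|\nabla e_h\|_{L^2\II}\le c(\delta+h^2)$ and hence $\|e_h\|_{L^2\II}\le c(\delta+h^2)$. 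Squaring the residual bound and combining with the regularization estimate then produces the asserted inequality.

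The principal technical obstacle is securing the uniform discrete bound $\|\nabla\tilde w_h\|_{L^\infty\II}\le c$ used in the factorization above. This rests on the $W^{1,\infty}$-stability of the Galerkin projection on quasi-uniform meshes (inherited from $w^\dag\in W^{1,\infty}\II$), a classical but delicate result; alternatively, an Aubin--Nitsche duality argument tested against the dual problem with smooth coefficient $q^\dag$ could sidestep this requirement at the cost of a longer derivation.
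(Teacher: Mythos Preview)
Your proposal is correct and follows essentially the same strategy as the paper: test the functional at $\mathcal{I}_h q^\dag$ and reduce matters to the intermediate bound $\|w_h(\mathcal{I}_h q^\dag)-w(q^\dag)\|_{L^2\II}\le c(h^2+\delta)$. The paper obtains this bound by citing a duality argument from \cite{JinZhou:SINUM2021}, whereas you derive it via the auxiliary Galerkin solution $\tilde w_h$ and the discrete $W^{1,\infty}$ estimate---both routes are valid, and the $W^{1,\infty}$ stability you flag is precisely what the paper itself invokes (with references \cite{Rannacher:1982,Guzman:2009}) in the proof of Lemma~\ref{lem:zhIhs-zs}.
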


\begin{proof}
First of all, we notice the following estimate
\begin{align}\label{eq:w_h(I_hq)-w(q)}
	\| w_h(\mathcal{I}_hq^{\dag})-w(q^{\dag}) \|_{L^2\II}\le c(h^2+\delta).
\end{align}
The proof follows from the Lax--Milgram lemma and the standard duality argument, similar to that of \cite[Lemma A.1]{JinZhou:SINUM2021}.
The only difference is that the source term $F^\delta$ in \eqref{eq:min_D_dis_restriction} is noisy with level $\delta$, cf. \eqref{eqn:Fd}.
Since $q_h^*$ is a minimizer of $J_{\alpha_1,h}$, we have $J_{\alpha_1,h}(q_h^*)\le J_{\alpha_1,h}(\mathcal{I}_h q^{\dag})$. This combined with the estimate \eqref{eq:w_h(I_hq)-w(q)}
leads to
\begin{equation*}
	\begin{aligned}
		\| w_h(q_h^*)-w^{\delta}\|_{L^2\II}^2+\alpha_1 \|\nabla q_h^{*}\|_{L^2\II}^2
		\le&   \| w_h(\mathcal{I}_h q^{\dag})-w^{\delta}\|_{L^2\II}^2+\alpha_1 \|\nabla\mathcal{I}_h q^{\dag}\|_{L^2\II}^2\\
		\le& \| w_h(\mathcal{I}_h q^{\dag})-w(q^{\dag})\|_{L^2\II}^2+ \| w(q^{\dag})-w^{\delta}\|_{L^2\II}^2+\alpha_1\|\nabla\mathcal{I}_h q^{\dag}\|_{L^2\II}^2\\
		\le&  c (h^4 +\delta^2+\alpha_1),
	\end{aligned}
\end{equation*}
Then the proof is complete.
\end{proof}\vskip5pt

The following theorem establishes a bound for the error $q^{\dag}-q_h^*$. The approach is inspired by the stability estimate provided in Theorem \ref{thm:stab-Du2}.
\vskip5pt

\begin{theorem}\label{thm:error_weighted_diffusion}
	Suppose Assumptions \ref{ass:ellip} and \ref{ass:ellip-2} hold valid. Let $q^\dag = D^\dag |u_1(D^\dag,\sigma^\dag)|^2 \in \mathcal{A}_{q}$ be the exact parameter in \eqref{eq:elliptic_diffusion_0_bdry}, $w^\dag=w(q^{\dag})$ be the solution of \eqref{eq:elliptic_diffusion_0_bdry}, and $q_h^*\in \mathcal{A}_{q,h}$ be a minimizer of problem \eqref{eq:min_D_dis}-\eqref{eq:min_D_dis_restriction}. Then with $\eta=h^2+\delta+\alpha_1^{\frac{1}{2}}$, there holds
	\begin{align*}
		\int_{\Omega}\left(\frac{q^{\dag}-q_h^*}{q^{\dag}}\right)^2\left( q^{\dag}|\nabla w^\dag|^2+Fw(q^{\dag})\right)\d x\le
		c \big( (h\eta\alpha_1^{-\frac{1}{2}}+\min(h+h^{-1}\eta,1))\eta\alpha_1^{-\frac{1}{2}}+\delta\big).
	\end{align*}
	Moreover, let $D_h^* = q_h^*/|z_1^\delta|^2$. If the positive condition \eqref{eq:positive_condition} holds with some $\beta\ge0$, then
	\begin{align*}
		\| D_h^* -  D^\dag \|_{L^2(\Omega)} \le c \big( (h\eta\alpha_1^{-\frac{1}{2}}+\min(h+h^{-1}\eta,1))\eta\alpha_1^{-\frac{1}{2}}+\delta\big)^{\frac{1}{2(1+\beta)}}.
	\end{align*}
\end{theorem}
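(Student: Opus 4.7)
The plan is to mirror the continuous stability argument of Theorem \ref{thm:stab-Du2}, replacing the continuous test function by a suitable discrete one and carefully tracking the perturbations introduced by the finite element discretization and the noisy source $F^\delta$. Set $\xi_h := q^\dag - q_h^*$ and $w_h^* := w_h(q_h^*)$. The starting point consists of two variational identities. Testing the continuous equation for $w^\dag=w(q^\dag)$ against $v_h\in V_h^0$ and subtracting the discrete equation \eqref{eq:min_D_dis_restriction} for $w_h^*$ yields
\begin{equation*}
(\xi_h\nabla w^\dag,\nabla v_h) = (q_h^*\nabla(w_h^*-w^\dag),\nabla v_h) + (F-F^\delta, v_h),
\end{equation*}
the discrete analogue of \eqref{eq:xi_nabla_nabla_1}. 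Separately, multiplying $-\nabla\cdot(q^\dag\nabla w^\dag)=F$ by $\xi_h v/q^\dag$ and integrating by parts exactly as in the derivation of \eqref{eq:xi_nabla_nabla_2} produces the continuous identity valid for any $v\in W^{1,2}_0\II$.

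The natural continuous choice is $v := \xi_h w^\dag/q^\dag\in W^{1,2}_0\II$ (the boundary trace vanishes since $w^\dag|_{\partial\Omega}=0$), which collapses the first two terms of \eqref{eq:xi_nabla_nabla_2} to $\tfrac12\int_\Omega(\xi_h/q^\dag)^2 q^\dag|\nabla w^\dag|^2\,\d x$ and hence produces the sought weighted quantity on the right-hand side. I would then take $v_h := \mathcal{I}_h v \in V_h^0$ in the first identity (continuity of $v$ follows from $q_h^*\in V_h$), combine with the second applied to $v$, and arrive at
\begin{equation*}
\tfrac12\int_\Omega\Big(\tfrac{\xi_h}{q^\dag}\Big)^{\!2}\!(q^\dag|\nabla w^\dag|^2 + F w^\dag)\,\d x = \underbrace{(q_h^*\nabla(w_h^*-w^\dag),\nabla v_h)}_{\mathrm{I}} + \underbrace{(F-F^\delta, v_h)}_{\mathrm{II}} + \underbrace{(\xi_h\nabla w^\dag, \nabla(v-v_h))}_{\mathrm{III}}.
\end{equation*}

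The proof then reduces to controlling $\mathrm{I}$, $\mathrm{II}$, $\mathrm{III}$ via Lemma \ref{lem:ellip-est-1}, which supplies $\|w_h^*-w^\dag\|_{L^2\II}\le c\eta$ and $\|\nabla q_h^*\|_{L^2\II}\le c\eta\alpha_1^{-1/2}$. For $\mathrm{I}$, the inverse inequality \eqref{eq:inverse_ineq} applied to $w_h^*-\mathcal{I}_h w^\dag\in V_h^0$ combined with standard interpolation gives $\|\nabla(w_h^*-w^\dag)\|_{L^2\II}\le c(h+h^{-1}\eta)$, while the trivial energy bound gives $c$; together these yield the factor $\min(h+h^{-1}\eta,1)$. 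The gradient of the test function is bounded by $\|\nabla v_h\|_{L^2\II}\le c\|\nabla v\|_{L^2\II}\le c(1+\|\nabla q_h^*\|_{L^2\II})\le c\eta\alpha_1^{-1/2}$, producing $\mathrm{I}\le c\min(h+h^{-1}\eta,1)\eta\alpha_1^{-1/2}$. For $\mathrm{II}$ one invokes \eqref{eqn:Fd} and $\|v_h\|_{L^2\II}\le c$ to obtain $\mathrm{II}\le c\delta$. The term $\mathrm{III}$ is the most delicate: the decomposition $v=w^\dag-q_h^*w^\dag/q^\dag$ reveals that on each element $K$ the Hessian $\nabla^2 q_h^*|_K$ vanishes, so a product-rule computation gives the piecewise bound $\|v\|_{H^2_{pw}\II}\le c(1+\|\nabla q_h^*\|_{L^2\II})\le c\eta\alpha_1^{-1/2}$; the elementwise interpolation estimate \eqref{eq:error_I_h} then produces $\|\nabla(v-\mathcal{I}_h v)\|_{L^2\II}\le ch\eta\alpha_1^{-1/2}$, hence $\mathrm{III}\le c\, h\eta\alpha_1^{-1/2}\cdot \eta\alpha_1^{-1/2}$.

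With the weighted bound in hand, the $L^2$-estimate for $q^\dag-q_h^*$ follows by the same $\Omega_\rho$ versus $\Omega_\rho^c$ splitting as in Theorem \ref{thm:stab-Du2}: the positivity condition \eqref{eq:positive_condition} turns the weighted integral into $\rho^{-\beta}$ times $\|\xi_h\|_{L^2(\Omega_\rho)}^2$, while $|\Omega_\rho^c|\le c\rho$; optimising in $\rho$ yields the exponent $1/(2(1+\beta))$. The conversion from $q_h^*$ to $D_h^* = q_h^*/|z_1^\delta|^2$ follows at once from the strict positivity of $z_1^\delta$ and $u_1^\dag$ and from the noise estimate \eqref{eq:noise_level}, which control $\|D_h^*-D^\dag\|_{L^2\II}$ by $c(\|q_h^*-q^\dag\|_{L^2\II}+\delta)$. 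The main technical hurdle is the bound on $\mathrm{III}$: the discrete test function $v=\xi_h w^\dag/q^\dag$ inherits only piecewise $H^2$ regularity from $q_h^*$, and the decisive estimate $\|v\|_{H^2_{pw}\II}\le c\eta\alpha_1^{-1/2}$ hinges crucially on the a priori control of $\|\nabla q_h^*\|_{L^2\II}$ provided by the regularisation term in \eqref{eq:min_D_dis}.
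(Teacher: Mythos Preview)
Your proof is correct and follows the same overall strategy as the paper: the identical test function $v=\xi_h w^\dag/q^\dag$, the same weighted identity from Theorem~\ref{thm:stab-Du2}, and the same use of Lemma~\ref{lem:ellip-est-1} to control $\|w_h^*-w^\dag\|_{L^2\II}$ and $\|\nabla q_h^*\|_{L^2\II}$. The only substantive technical difference is your choice of discrete surrogate for $v$. The paper uses the $L^2$-projection $P_h v$ and handles the correction term by integrating by parts, writing $(\xi_h\nabla w^\dag,\nabla(v-P_hv))=-(\nabla\cdot(\xi_h\nabla w^\dag),v-P_hv)$ and then invoking $\|v-P_hv\|_{L^2\II}\le ch\|\nabla v\|_{L^2\II}$; this relies on $\Delta w^\dag\in L^2\II$ but avoids any second-order regularity of $v$ itself. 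You instead take $v_h=\mathcal{I}_h v$ and bound $\|\nabla(v-\mathcal{I}_h v)\|_{L^2\II}$ elementwise via the piecewise $H^2$ regularity of $v$. Both routes are valid; the paper's choice sidesteps the need to verify that $w^\dag/q^\dag\in W^{2,2}\II\cap W^{1,\infty}\II$ (which your elementwise bound implicitly uses), while yours is more direct once that regularity is in hand. Two minor points worth tightening: your claim ``$\|\nabla\mathcal{I}_h v\|_{L^2\II}\le c\|\nabla v\|_{L^2\II}$'' is not a general $H^1$-stability property of $\mathcal{I}_h$ --- it follows here only because you already control $\|v\|_{H^2_{pw}}$, so write $\|\nabla\mathcal{I}_h v\|\le\|\nabla v\|+\|\nabla(v-\mathcal{I}_h v)\|$ instead; and your own argument for $\mathrm{III}$ actually gives the tighter bound $|\mathrm{III}|\le c\cdot h\eta\alpha_1^{-1/2}$ (since $\|\xi_h\nabla w^\dag\|_{L^2\II}\le c$), which of course still implies the theorem's right-hand side.
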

\begin{proof}
	For any test function $\varphi\in W^{1,2}_0\II$, the  the weak formulation of $w(q^{\dag})$ and $w_h(q_h^*)$ imply
	\begin{align*}
		\left((q^{\dag}-q_h^*)\nabla w^\dag,\nabla\varphi\right)=&\left((q^{\dag}-q_h^*)\nabla w^\dag,\nabla(\varphi-P_h\varphi)\right)+\left((q^{\dag}-q_h^*)\nabla w^\dag,\nabla P_h\varphi\right)\\
		=&-\left(\nabla\cdot\left((q^{\dag}-q_h^*)\nabla w^\dag\right),\varphi-P_h\varphi\right)\\
		&+\left(q_h^*(\nabla w_h(q_h^*)-\nabla w^\dag),\nabla P_h\varphi\right)+\left( F-F^{\delta},P_h\varphi\right).
	\end{align*}
	Motivated by the proof of Theorem \ref{thm:stab-Du2}, we choose $\varphi=\frac{q^{\dag}-q_h^*}{q^{\dag}}w^\dag$. A direct computation leads to
	\begin{align*}
		\nabla\varphi=\left(\frac{q^{\dag}\nabla(q^{\dag}-q_h^*)-(q^{\dag}-q_h^*)\nabla q^{\dag}}{q^{\dag2}}\right)w+\frac{q^{\dag}-q_h^*}{q^{\dag}}\nabla w^\dag.
	\end{align*}
	By the box constraint of the admissible sets $\mathcal{A}_q$ and $\mathcal{A}_{q,h}$ as well as the regularity of $w^\dag$, we derive
	\begin{align*}
		\|\nabla\varphi\|_{L^2\II}&\le c  \| q^{\dag}\nabla(q^{\dag}-q_h^*)-(q^{\dag}-q_h^*)\nabla q^{\dag} \|_{L^2\II} \|w^\dag\|_{L^\infty\II}+  c \| q^{\dag}-q_h^* \|_{L^\infty\II} \|\nabla w^\dag\|_{L^2\II}\\
		&\le c \| q^{\dag} \|_{L^\infty\II} (\|\nabla q^{\dag}\|_{L^2\II} + \| \nabla q_h^*\|_{L^2\II}) + c ( \| q^{\dag}\|_{L^\infty\II} + \| q_h^* \|_{L^\infty\II}) \|\nabla q^{\dag} \|_{L^2\II}) + c \\
		&\le c (1+ \| \nabla q_h^* \|_{L^2\II}).
	\end{align*}
	Next, according to the box constraint of $q^\dag$ and $q_h^*$, the regularity of $q^\dag$ and $w^\dag$, the approximation property of $P_h$ in \eqref{eq:error_P_h},
	as well as Lemma \ref{lem:ellip-est-1}, we have
	\begin{align*}
		&\quad |\left(\nabla\cdot\left((q^{\dag}-q_h^*)\nabla w^\dag\right),\varphi-P_h\varphi\right)|\\
		&\le \left(\|  q^{\dag}\|_{L^{\infty}\II}\| \nabla w^{\dag}\|_{L^2\II}+\|  q^{\dag}-q_h^*\|_{L^{\infty}\II}\|  \Delta w^{\dag}\|_{L^2\II}+\| \nabla q_h^*\|_{L^2\II}\| \nabla w^{\dag}\|_{L^{\infty}\II}\right)\| \varphi-P_h\varphi\|_{L^2\II}\\
		&\le c \left( 1 + \| \nabla q_h^*\|_{L^2\II} \right) \| \varphi-P_h\varphi\|_{L^2\II}
		\le ch\left(1+\| \nabla q_h^*\|_{L^2\II} \right)\| \nabla \varphi\|_{L^2\II}\\
		&\le  ch\left(1+\| \nabla q_h^*\|_{L^2\II} \right)^2 \le c h(1+\alpha_1^{-1}\eta^2).
	\end{align*}
	For the remaining terms, by the triangle inequality, the inverse inequality \eqref{eq:inverse_ineq}, the stability and approximation of $P_h$, and
	 Lemma \ref{lem:ellip-est-1}, we have
	\begin{align*}
		\| \nabla(w_h(q_h^*)-w^\dag )\|_{L^2\II}&\le \| \nabla(w_h(q_h^*)-P_hw^\dag )\|_{L^2\II}+\| \nabla(P_hw^\dag -w^\dag )\|_{L^2\II}\\
		&\le c h^{-1}\|  w_h(q_h^*)-P_hw^\dag\|_{L^2\II} + c h\|  w^\dag \|_{W^{2,2}\II} \\
		&\le ch^{-1}\left(\|  w_h(q_h^*)-P_hw_h(q_h^*)\|_{L^2\II}+\|  P_hw_h(q_h^*)-P_h w^\dag\|_{L^2\II} \right) + c h\|  w^\dag \|_{W^{2,2}\II} \\
		&\le c\left(h+h^{-1}\|  w_h(q_h^*)-w(q^{\dag})\|_{L^2\II}  \right) \le c \left(h+h^{-1}\eta \right).
	\end{align*}
	Meanwhile, the Lax--Milgram lemma implies $\| \nabla(w_h(q_h^*)-w^\dag)\|_{L^2\II} \le \| \nabla w_h(q_h^*)\|_{L^2\II} + \| \nabla w^\dag\|_{L^2\II} \le C $.
	As a result, we derive
	\begin{align*}
		| (q_h^*(\nabla w_h(q_h^*)-\nabla w^\dag),\nabla P_h\varphi)+( F-F^{\delta},P_h\varphi)|
		&\le c \| \nabla(w_h(q_h^*)-w^\dag)\|_{L^2\II}\| \nabla\varphi\|_{L^2\II}+\delta\| \varphi\|_{L^2\II}\\
		&\le  c\left(\min(h+h^{-1}\eta,1)\alpha_1^{-1/2}\eta+\delta\right)
	\end{align*}
	Then using integration by parts and $F=-\nabla\cdot(q^{\dag}\nabla w(q^{\dag}))$, we have
	\begin{align*}
		\left((q^{\dag}-q_h^*)\nabla w(q^{\dag}),\nabla\varphi\right) =\frac{1}{2}\int_{\Omega}\left(\frac{q^{\dag}-q_h^*}{q^{\dag}}\right)^2\left( q^{\dag}|\nabla w(q^{\dag})|^2+Fw(q^{\dag})\right)\d x.
	\end{align*}
	Consequently, under positivity condition \eqref{eq:positive_condition}, the same argument of \eqref{eq:estimate_Du_1^2} yields
	\begin{align*}
		\| q_h^* -  q^\dag \|_{L^2(\Omega)} \le c \big( (h\eta\alpha_1^{-\frac{1}{2}}+\min(h+h^{-1}\eta,1))\eta\alpha_1^{-\frac{1}{2}}+\delta\big)^{\frac{1}{2(1+\beta)}}.
	\end{align*}
	Finally, we let $D_h^* = q_h^*/|z_1^\delta|^2$ and recall that $D^\dag = q^\dag / |u_1^\dag|^2$ with $u_1^\dag = u_1 (D^\dag, \sigma^\dag)$.
	Using the box constraint of $q_h^*$, the estimate \eqref{eq:noise_level}, the regularity of $u_1^\dag$ (provided that Assumption \ref{ass:ellip} holds valid),
	and  the fact that $u_1^\dag, z_1^\delta \ge c_u$, we derive
	\begin{align*}
		\|D_h^*  - D^\dag \|_{L^2\II} & \le c (\| q_h^*\|_{L^\infty\II} \| (z_1^\delta)^2 - (u_1^\dag)^2 \|_{L^2\II} + \| q_h^* - q^\dag \|_{L^2\II} \| u_1^\dag \|_{L^\infty\II}) \\
		&\le c (\delta + \| q_h^* - q^\dag \|_{L^2\II}) \le c \big( (h\eta\alpha_1^{-\frac{1}{2}}+\min(h+h^{-1}\eta,1))\eta\alpha_1^{-\frac{1}{2}}+\delta\big)^{\frac{1}{2(1+\beta)}}.
	\end{align*}
	This completes the proof of the theorem.
\end{proof}

\begin{remark}\label{rem:error_diffusion_D}
Theorem \ref{thm:error_weighted_diffusion} serves as a guideline for the \textsl{a priori} selection of algorithmic parameters,
suggesting $\alpha_1 \sim \delta^2$ and $h \sim \sqrt\delta$. Given the positivity condition \eqref{eq:positive_condition} with $\beta\ge0$,
the following estimate is valid:
\begin{align*}
\| D^{\dag}-D_h^*\|_{L^2\II}\le c \delta^{\frac{1}{4(1+\beta)}}.
\end{align*}
This estimate is optimal with respect to the conditional stability estimate \eqref{eqn:stab-D}.
\end{remark}

\subsection{Step two: Numerically recover the potential $\sigma$}\label{subsec:recover_sigma}

In this section, we give the reconstruction formula of potential $\sigma$ and corresponding error analysis. We make following regularity assumption for true potential $\sigma^{\dag}$.
\begin{assumption}\label{assume:potential_regularity}
	The potential  $\sigma^{\dag}$ satisfies $\sigma^{\dag}\in  W^{2,2}\II \cap \mathcal{A}_{\sigma}$.
\end{assumption}

Recall $\zeta  =u_2-u_1$  satisfies following elliptic equation
\begin{equation*}
	\left\{
	\begin{aligned}
		-\nabla \cdot \left(D^\dag \nabla \zeta\right)+\sigma^\dag \zeta&=f_2-f_1,\quad \text{in }\Omega,\\
		\zeta&=0,\quad \text{on }\partial\Omega.
	\end{aligned}
	\right.
\end{equation*}
The noisy observational data for equation \eqref{eq:elliptic_potential_0_bdry} is $\zeta^{\delta}=z_2^{\delta}-z_1^{\delta}$ and $\|  \zeta-\zeta^{\delta}\|_{L^2\II}\le c\delta$.

From now on, let $H$ represent the spatial mesh size, which may differ from $h$ used in the previous section.
Utilizing the output least-squares formulation, we can approximate the recovery of the potential as follows:
\begin{align}\label{eq:min_sigma_dis}
	\min_{\sigma_H\in \mathcal{A}_{\sigma,H}} \mathcal{J}_{\alpha_2,H }(\sigma_H)=\frac{1}{2}\|  \zeta_H(\sigma_H)-\zeta^{\delta}\|_{L^2\II}^2+\frac{\alpha_2}{2}\|  \nabla \sigma_H\|_{L^2\II}^2
\end{align}
where $\mathcal{A}_{\sigma,H}=\mathcal{A}_{\sigma}\cap V_H$ and $\zeta_H(\sigma_H)\in V_H^0$ is the solution to the finite dimensional problem
\begin{equation}\label{eq:min_sigma_dis_restriction}
	(D_h^*\nabla \zeta_H,\nabla v_H)+(\sigma_H \zeta_H,v_H)=(f_2-f_1,v_H)\qquad\forall ~v_H\in V_{H}^0.
\end{equation}
Here $D_h^*$ is the diffusion coefficient we reconstructed in Theorem \ref{thm:error_weighted_diffusion}.
As stated in Remark \ref{rem:error_diffusion_D},
we have the \textsl{a priori} error estimate
$$\|  D_h^*-D^{\dag}\|_{L^2\II}\le c\, \delta^{\gamma} =:\epsilon \qquad  \text{with} ~~\gamma=(4(1+\beta))^{-1}.$$
The discrete problem \eqref{eq:min_sigma_dis}-\eqref{eq:min_sigma_dis_restriction} is well-posed: there exists at least one global minimizer $\sigma_H^*$ to problem \eqref{eq:min_sigma_dis}-\eqref{eq:min_sigma_dis_restriction}
and it depends continuously on the data perturbation. Then we aim to establish an \textsl{a priori} error bound between
$\sigma_H^*$ and $\sigma^{\dag}$.

To accomplish this, we initially derive a bound for $\zeta_H(\mathcal{I}_H \sigma^{\dag })-\zeta(\sigma^{\dag})$. This is achieved using the standard estimates applicable to the finite element method.
\begin{lemma}\label{lem:zhIhs-zs}
	Suppose Assumption \ref{assume:potential_regularity} holds, \textcolor{black}{$D^{\dag}\in \mathcal{A}_D\cap W^{1,\infty}\II$} and $\|  D_h^*-D^{\dag}\|_{L^2\II}\le \epsilon$.
	Let $\zeta(\sigma^\dag)$ be the solution to the elliptic problem \eqref{eq:elliptic_potential_0_bdry}, while $\zeta_H(\mathcal{I}_H \sigma^{\dag})$ be the solution to the finite dimensional problem \eqref {eq:min_sigma_dis_restriction} where $\sigma_H$ is replaced with $\mathcal{I}_H \sigma^{\dag }$. Then
	\begin{align*}
		\|  \zeta_H(\mathcal{I}_H \sigma^{\dag })-\zeta(\sigma^{\dag})\|_{L^2\II}\le c (H^2+\epsilon).
	\end{align*}
\end{lemma}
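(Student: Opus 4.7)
The plan is to introduce an intermediate standard Galerkin approximation $\hat\zeta_H \in V_H^0$ of $\zeta(\sigma^\dag)$ that uses the \emph{exact} coefficients, namely
\begin{equation*}
(D^\dag \nabla \hat\zeta_H, \nabla v_H) + (\sigma^\dag \hat\zeta_H, v_H) = (f_2 - f_1, v_H) \qquad \forall\, v_H \in V_H^0,
\end{equation*}
and then split the error by the triangle inequality as $\zeta_H(\mathcal{I}_H\sigma^\dag) - \zeta(\sigma^\dag) = \bigl(\zeta_H(\mathcal{I}_H\sigma^\dag) - \hat\zeta_H\bigr) + \bigl(\hat\zeta_H - \zeta(\sigma^\dag)\bigr)$, so that the two sources of error (coefficient perturbation versus Galerkin discretization) decouple cleanly.

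The second piece is handled by classical elliptic FEM theory. Under Assumption \ref{assume:potential_regularity} together with $D^\dag \in \A_D \cap W^{1,\infty}\II$, the exact solution $\zeta(\sigma^\dag)$ belongs to $W^{2,2}\II \cap W^{1,\infty}\II$ by elliptic regularity on the convex polyhedral domain $\Omega$. A standard Aubin--Nitsche duality argument then yields $\|\hat\zeta_H - \zeta(\sigma^\dag)\|_{L^2\II} \le c H^2$.

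For the first piece I set $e_H := \zeta_H(\mathcal{I}_H\sigma^\dag) - \hat\zeta_H \in V_H^0$ and subtract the two discrete equations to obtain, for every $v_H \in V_H^0$,
\begin{equation*}
(D_h^* \nabla e_H, \nabla v_H) + (\mathcal{I}_H\sigma^\dag\, e_H, v_H) = \bigl((D^\dag - D_h^*)\nabla \hat\zeta_H, \nabla v_H\bigr) + \bigl((\sigma^\dag - \mathcal{I}_H\sigma^\dag)\hat\zeta_H, v_H\bigr).
\end{equation*}
Testing with $v_H = e_H$ and using the uniform coercivity $D_h^* \ge \underline{c}_D > 0$ together with $\mathcal{I}_H\sigma^\dag \ge 0$, the left-hand side dominates $\underline{c}_D \|\nabla e_H\|_{L^2\II}^2$. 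On the right-hand side, Cauchy--Schwarz together with the hypothesis $\|D^\dag - D_h^*\|_{L^2\II} \le \epsilon$, the interpolation estimate \eqref{eq:error_I_h} applied to $\sigma^\dag \in W^{2,2}\II$ (giving $\|\sigma^\dag - \mathcal{I}_H\sigma^\dag\|_{L^2\II} \le cH^2$), and uniform $W^{1,\infty}$-bounds on $\hat\zeta_H$ then lead, after absorbing $\|e_H\|_{L^2\II}$ via Poincaré's inequality, to $\|\nabla e_H\|_{L^2\II} \le c(\epsilon + H^2)$. Consequently $\|e_H\|_{L^2\II} \le c(\epsilon + H^2)$, and the triangle inequality closes the argument.

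The main technical obstacle is securing the uniform bound on $\|\hat\zeta_H\|_{W^{1,\infty}\II}$ used above. Since $\zeta(\sigma^\dag) \in W^{1,\infty}\II$, this follows from the known maximum-norm stability of the elliptic Galerkin projection on quasi-uniform meshes for convex polyhedral domains, possibly with a harmless logarithmic factor in two dimensions which is absorbed into the generic constant $c$. Alternatively, one may decompose $\hat\zeta_H = \mathcal{I}_H \zeta(\sigma^\dag) + \bigl(\hat\zeta_H - \mathcal{I}_H \zeta(\sigma^\dag)\bigr)$, bounding the first summand via \eqref{eq:error_I_h} with $p = \infty$, and the discrete remainder by combining the inverse inequality \eqref{eq:inverse_ineq} with the standard $W^{1,2}$ Galerkin error estimate.
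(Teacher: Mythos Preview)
Your proof is correct and follows essentially the same strategy as the paper: the paper uses a three-way splitting through two intermediate Galerkin solutions (one with $D^\dag,\mathcal{I}_H\sigma^\dag$ and one with $D^\dag,\sigma^\dag$), whereas you use a single intermediate $\hat\zeta_H$ and handle both coefficient perturbations in one energy estimate, but the key technical ingredient---a uniform $W^{1,\infty}$ bound on the discrete solution, for which the paper cites \cite{Rannacher:1982,Guzman:2009}---is identical. One caveat: your \emph{alternative} route to the $W^{1,\infty}$ bound via the inverse inequality and the $W^{1,2}$ Galerkin error gives only $O(H^{1-d/2})$ for $\|\nabla(\hat\zeta_H-\mathcal{I}_H\zeta)\|_{L^\infty\II}$, which blows up for $d=3$, so you should rely on the max-norm stability result rather than this fallback.
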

\begin{proof}
We apply the following splitting
$$\zeta_H(\mathcal{I}_H \sigma^{\dag })-   \zeta(\sigma^{\dag})  =(\zeta_H(\mathcal{I}_H \sigma^{\dag })-\overline{\zeta}_H )
	+ ( \overline{\zeta}_H - \tilde{\zeta}_H )+ (   \tilde{\zeta}_H -   \zeta(\sigma^{\dag})) =: \sum_{j=1}^3 e_j,$$
where $\overline{\zeta}_H $ and  $\tilde{\zeta}_H$ respectively satisfy
\begin{align*}
		(D^{\dag}\nabla \overline{\zeta}_H ,\nabla v_H)+(\mathcal{I}_H\sigma^\dag \overline{\zeta}_H ,v_H)&=(f_2-f_1,v_H)\qquad\forall ~v_H\in V_H^0,\quad \text{and} \\
		(D^{\dag}\nabla \tilde{\zeta}_H ,\nabla v_H)+(\sigma^\dag \tilde{\zeta}_H ,v_H)&=(f_2-f_1,v_H)\qquad\forall ~v_H\in V_{H}^0.
\end{align*}
We begin with the $L^2\II$ bound of $e_1$, which satisfies
	\begin{align*}
		(D_h^*\nabla e_1 ,\nabla v_H)+((\mathcal{I}_H\sigma^\dag) e_1,v_H)&=((D^\dag-D_h^*) \nabla\overline\zeta_H,\nabla v_H)\qquad\forall ~v_H\in V_H^0.
	\end{align*}
	Now we choose $v_H = e_1$ in the above relation.
	By the regularity $\| D^\dag \|_{W^{1,\infty}\II} + \| (\mathcal{I}_H) \sigma^\dag \|_{L^\infty\II} \le c$, we conclude that
	 $\| \nabla\overline\zeta_H \|_{L^\infty\II}\le c$ (cf. \cite{Rannacher:1982} and \cite[Theorem 2]{Guzman:2009}).
	This together with the fact that $D_h^{\dag} \in \mathcal{A}_D$ and Poincar{\'e}'s inequality  implies
	\begin{align*}
		\| e_1 \|_{H^1\II} \le \| D^\dag-D_h^*\|_{L^2\II}  \| \nabla\overline\zeta_H\|_{L^\infty\II} \le c \epsilon.
	\end{align*}
	Now we turn to the second term $e_2$ which satisfies
	\begin{align*}
		(D^\dag \nabla e_2 ,\nabla v_H)+((\mathcal{I}_H \sigma^\dag) e_2,v_H)&= ((\sigma-\mathcal{I}_H \sigma^\dag) \tilde{\zeta}_H,v_H)\qquad\forall ~v_H\in V_H^0.
	\end{align*}
	Letting $v_H=e_2$ and using the fact that $\| \tilde{\zeta}_H  \|_{L^\infty\II} \le C$, we arrive at
	\begin{align*}
		\| \nabla e_2 \|_{L^2\II} \le c \| \sigma-\mathcal{I}_H \sigma^\dag\|_{L^2}  \|\tilde{\zeta}_H \|_{L^\infty\II} \le c H^2,
	\end{align*}
	where we use the estimate \eqref{eq:error_I_h} and the assumption that $\sigma^\dag \in W^{2,2}\II$.
	Finally, the estimate for the third term,  $\|  e_3 \|_{L^2\II}\le CH^2$, can be estimated directly by applying the standard argument.
\end{proof}\vskip5pt

The subsequent lemma offers useful bounds for the state $\zeta_H(\sigma_H^*)$ and the $W^{1,2}(\Omega)$ seminorm of $\sigma_H^*$.
\vskip5pt
\begin{lemma}\label{lem:zhsh*-zs}
	Suppose Assumption \ref{assume:potential_regularity} holds, \textcolor{black}{$D^{\dag}\in \mathcal{A}_D\cap W^{1,\infty}\II$ } and $\|  D_h^*-D^{\dag}\|_{L^2\II}\le \epsilon$. Then there is
	\begin{align*}
		\|  \zeta(\sigma^{\dag})-\zeta_H(\sigma_H^*)\|_{L^2\II}+ \alpha_2^{\frac{1}{2}}\|  \nabla \sigma_H^*\|_{L^2\II}\le c(H^2+\epsilon+\delta +\alpha_2^{\frac{1}{2}})
	\end{align*}
\end{lemma}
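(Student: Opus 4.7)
The plan is to exploit the minimizing property of $\sigma_H^*$ together with Lemma \ref{lem:zhIhs-zs}, in the classical Tikhonov-fidelity splitting style. Since $\sigma_H^*$ minimizes $\mathcal{J}_{\alpha_2,H}$ over $\mathcal{A}_{\sigma,H}$ and $\mathcal{I}_H\sigma^\dag\in\mathcal{A}_{\sigma,H}$ (for $H$ small enough, using $\sigma^\dag\in W^{2,2}\II\subset C(\overline\Omega)$ and the box constraint preserving property of the Lagrange interpolant), we have
\begin{equation*}
\tfrac{1}{2}\|\zeta_H(\sigma_H^*)-\zeta^\delta\|_{L^2\II}^2+\tfrac{\alpha_2}{2}\|\nabla\sigma_H^*\|_{L^2\II}^2
\le \tfrac{1}{2}\|\zeta_H(\mathcal{I}_H\sigma^\dag)-\zeta^\delta\|_{L^2\II}^2+\tfrac{\alpha_2}{2}\|\nabla\mathcal{I}_H\sigma^\dag\|_{L^2\II}^2.
\end{equation*}

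First I would bound the right-hand side. By the triangle inequality,
\begin{equation*}
\|\zeta_H(\mathcal{I}_H\sigma^\dag)-\zeta^\delta\|_{L^2\II}\le \|\zeta_H(\mathcal{I}_H\sigma^\dag)-\zeta(\sigma^\dag)\|_{L^2\II}+\|\zeta(\sigma^\dag)-\zeta^\delta\|_{L^2\II},
\end{equation*}
where the first term is controlled by Lemma \ref{lem:zhIhs-zs} by $c(H^2+\epsilon)$ and the second by $c\delta$ from the noise assumption. For the gradient term, the stability of $\mathcal{I}_H$ in $W^{1,2}\II$ combined with Assumption \ref{assume:potential_regularity} gives $\|\nabla\mathcal{I}_H\sigma^\dag\|_{L^2\II}\le c\|\sigma^\dag\|_{W^{2,2}\II}\le c$. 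Squaring and absorbing constants then yields
\begin{equation*}
\|\zeta_H(\sigma_H^*)-\zeta^\delta\|_{L^2\II}^2+\alpha_2\|\nabla\sigma_H^*\|_{L^2\II}^2\le c(H^4+\epsilon^2+\delta^2+\alpha_2).
\end{equation*}

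Then I would recover the claimed bound on $\|\zeta(\sigma^\dag)-\zeta_H(\sigma_H^*)\|_{L^2\II}$ by one more triangle inequality,
\begin{equation*}
\|\zeta(\sigma^\dag)-\zeta_H(\sigma_H^*)\|_{L^2\II}\le \|\zeta_H(\sigma_H^*)-\zeta^\delta\|_{L^2\II}+\|\zeta^\delta-\zeta(\sigma^\dag)\|_{L^2\II},
\end{equation*}
and taking square roots (using $\sqrt{a+b}\le\sqrt a+\sqrt b$) of the previous inequality, so that both the fidelity piece and $\alpha_2^{1/2}\|\nabla\sigma_H^*\|_{L^2\II}$ inherit the bound $c(H^2+\epsilon+\delta+\alpha_2^{1/2})$.

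The only genuinely non-routine ingredient is already packaged in Lemma \ref{lem:zhIhs-zs}; the rest is a standard quasi-best-approximation argument. The mildly delicate point I would pay attention to is ensuring $\mathcal{I}_H\sigma^\dag\in\mathcal{A}_{\sigma,H}$, i.e.\ that the Lagrange interpolant preserves the lower bound $0$ and the upper bound $\bar c_\sigma$; this is immediate for $\sigma^\dag\in C(\overline\Omega)$ since $\mathcal{I}_H\sigma^\dag$ is a convex combination of nodal values of $\sigma^\dag$. No obstruction beyond routine estimates is expected.
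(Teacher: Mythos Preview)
Your proposal is correct and follows essentially the same approach as the paper: use the minimizing property $\mathcal{J}_{\alpha_2,H}(\sigma_H^*)\le \mathcal{J}_{\alpha_2,H}(\mathcal{I}_H\sigma^\dag)$, bound the right-hand side via the triangle inequality, Lemma~\ref{lem:zhIhs-zs}, the noise bound, and the $W^{1,2}$-stability of $\mathcal{I}_H$, then take square roots. Your version is in fact slightly more explicit than the paper's in two places---the final triangle inequality recovering $\|\zeta(\sigma^\dag)-\zeta_H(\sigma_H^*)\|_{L^2\II}$ from $\|\zeta_H(\sigma_H^*)-\zeta^\delta\|_{L^2\II}$, and the verification that $\mathcal{I}_H\sigma^\dag\in\mathcal{A}_{\sigma,H}$---both of which the paper leaves implicit.
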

\begin{proof}
	Since $\sigma_H^*$ is a minimizer of $\mathcal{J}_{\alpha_2,H}$, we have $\mathcal{J}_{\alpha_2,H}(\sigma_H^*)\le \mathcal{J}_{\alpha_2,H}(\mathcal{I}_H \sigma^{\dag}) $.
	Then we derive
	\begin{align*}
		&\|  \zeta_H(\sigma_H^*)-\zeta^{\delta}\|_{L^2\II}^2+\alpha_2\| \nabla \sigma_H^{*}\|_{L^2\II}^2\\
		\le& \|  \zeta_H(\mathcal{I}_H \sigma^{\dag})-\zeta^{\delta}\|_{L^2\II}^2+\alpha_2\| \nabla\mathcal{I}_H \sigma^{\dag}\|_{L^2\II}^2\\
		\le& \|  \zeta_H(\mathcal{I}_H \sigma^{\dag})-\zeta(\sigma^{\dag})\|_{L^2\II}^2+\|  \zeta(\sigma^{\dag})-\zeta^{\delta}\|_{L^2\II}^2+\alpha_2\| \nabla\mathcal{I}_H \sigma^{\dag}\|_{L^2\II}^2\\
		\le & c \left( (H^2+\epsilon)^2 +\delta^2+\alpha_2\right),
	\end{align*}
	where in the last inequality we use the result in Lemma \ref{lem:zhIhs-zs}.
\end{proof}

Then we are ready to show the error bound of numerically recovered potential.

\begin{theorem}\label{thm:error_weighted_potential}
	Suppose that Assumption \ref{assume:potential_regularity} holds valid, $D^{\dag}\in \mathcal{A}_D\cap W^{1,\infty}\II$ and $\|  D_h^*-D^{\dag}\|_{L^2\II}\le \epsilon$.
	Let $\sigma_H^*$ be the numerical reconstruction of the potential given in  \eqref{eq:min_sigma_dis}-\eqref{eq:min_sigma_dis_restriction}.
	Then with $\eta=H^2+\epsilon+\delta+\alpha_2^{\frac{1}{2}}$, there holds
	\begin{align*}
		\|  (\sigma^{\dag}-\sigma_H^*)\zeta(\sigma^{\dag})\|_{L^2\II}
\le   c \big(H\alpha_2^{-\frac{1}{2}}\eta+\eta+ \big(\alpha_2^{-\frac{1}{2}}\eta(\min\{H+H^{-1}\eta,1\}+\epsilon )\big)^{\frac{1}{2}} \big).
	\end{align*}
	Finally, if $f_2-f_1\ge c>0$ a.e. in $\Omega$, then for any $\Omega' \Subset \Omega$, there exists a constant $c$ depending on $\mathrm{dist}(\Omega',\partial\Omega)$ and  $D^{\dag},\sigma^{\dag}$ such that
	\begin{equation*}
		\| \sigma^{\dag}-\sigma_H^*\|_{L^2(\Omega')}
\le  c \big(H\alpha_2^{-\frac{1}{2}}\eta+\eta+ \big(\alpha_2^{-\frac{1}{2}}\eta(\min\{H+H^{-1}\eta,1\}+\epsilon )\big)^{\frac{1}{2}} \big).
	\end{equation*}
\end{theorem}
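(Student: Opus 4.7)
The plan is to mirror the argument of Theorem~\ref{thm:error_weighted_diffusion} applied to the potential-recovery problem~\eqref{eq:elliptic_potential_0_bdry}, using at the discrete level the stability template developed in the proof of Theorem~\ref{thm:stability_sigma}. For any $v\in W^{1,2}_0(\Omega)$, subtracting the discrete equation~\eqref{eq:min_sigma_dis_restriction} tested against $P_H v\in V_H^0$ from the continuous weak form of~\eqref{eq:elliptic_potential_0_bdry} produces an identity of the form
\begin{align*}
((\sigma^{\dag}-\sigma_H^*)\zeta(\sigma^{\dag}),v) = (D^{\dag}\nabla\zeta(\sigma^{\dag})-D_h^*\nabla\zeta_H(\sigma_H^*),\nabla v) + (\sigma_H^*(\zeta_H(\sigma_H^*)-\zeta(\sigma^{\dag})),v) + R_H(v),
\end{align*}
where the Galerkin residual $R_H(v)$ consists of contributions of the form $(\,\cdot\,,v-P_H v)$. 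I would then choose the test function $v=(\sigma^{\dag}-\sigma_H^*)\zeta(\sigma^{\dag})$, exactly as in the stability proof, so that the left side becomes $\|(\sigma^{\dag}-\sigma_H^*)\zeta(\sigma^{\dag})\|_{L^2(\Omega)}^2$.

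For this test function, Lemma~\ref{lem:zhsh*-zs} together with the $W^{1,\infty}(\Omega)$-regularity of $\zeta(\sigma^{\dag})$ yields $\|\nabla v\|_{L^2(\Omega)}\le c(1+\|\nabla\sigma_H^*\|_{L^2(\Omega)})\le c\alpha_2^{-1/2}\eta$. On the right-hand side I would split
\begin{align*}
D^{\dag}\nabla\zeta(\sigma^{\dag})-D_h^*\nabla\zeta_H(\sigma_H^*) = (D^{\dag}-D_h^*)\nabla\zeta(\sigma^{\dag}) + D_h^*\nabla(\zeta(\sigma^{\dag})-\zeta_H(\sigma_H^*)),
\end{align*}
bounding the first piece by $c\epsilon\|\nabla v\|_{L^2(\Omega)}$. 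The $H^1$-error $\|\nabla(\zeta_H(\sigma_H^*)-\zeta(\sigma^{\dag}))\|_{L^2(\Omega)}$ is then controlled by triangle inequality, the inverse inequality~\eqref{eq:inverse_ineq}, the $L^2$-projection bound~\eqref{eq:error_P_h}, and the $L^2$-estimate in Lemma~\ref{lem:zhsh*-zs}, yielding $c(H+H^{-1}\eta)$; combined with the trivial $O(1)$ energy bound from Lax--Milgram this supplies the factor $\min\{H+H^{-1}\eta,1\}$ in the final estimate. The Galerkin residual $R_H(v)$ is estimated via $\|v-P_H v\|_{L^2(\Omega)}\le cH\|\nabla v\|_{L^2(\Omega)}$ from~\eqref{eq:error_P_h} together with uniform bounds on $D_h^*\nabla\zeta_H(\sigma_H^*)$ and $\sigma_H^*\zeta_H(\sigma_H^*)$, which produces the $H\alpha_2^{-1/2}\eta$ contribution.

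Assembling the estimates, the middle term $(\sigma_H^*(\zeta_H(\sigma_H^*)-\zeta(\sigma^{\dag})),v)$ is bounded via Cauchy--Schwarz by $c\eta\|v\|_{L^2(\Omega)}$ and absorbed by Young's inequality to generate the pure $\eta$ contribution; the residual and diffusion-perturbation terms, being of the form $(\text{quantity})\cdot\|\nabla v\|_{L^2}\le c(\text{quantity})\alpha_2^{-1/2}\eta$ rather than bilinear in $\|v\|_{L^2}$, lead after a square root to the term $(\alpha_2^{-1/2}\eta(\min\{H+H^{-1}\eta,1\}+\epsilon))^{1/2}$. The main technical obstacle is threading the perturbation $D^{\dag}-D_h^*=O(\epsilon)$ through the diffusion term while respecting the $\alpha_2^{-1/2}\eta$ scaling of $\|\nabla v\|_{L^2}$, so that the three-term structure of the stated bound emerges cleanly without blow-up as $\alpha_2\to 0$. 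Finally, for the localized $L^2(\Omega')$ estimate, exactly as in the proof of Theorem~\ref{thm:stability_sigma}, the strong maximum principle~\cite[Theorem 1]{Vazquez:1984} applied to~\eqref{eq:elliptic_potential_0_bdry} under $f_2-f_1\ge c>0$ yields $\zeta(\sigma^{\dag})\ge c_{\Omega'}>0$ on $\overline{\Omega'}$, which directly converts the weighted bound into the pointwise $L^2(\Omega')$ bound.
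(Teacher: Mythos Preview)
Your overall strategy---choosing $v=(\sigma^\dag-\sigma_H^*)\zeta(\sigma^\dag)$, splitting the diffusion perturbation into $(D^\dag-D_h^*)\nabla\zeta+D_h^*\nabla(\zeta-\zeta_H)$, controlling $\|\nabla(\zeta_H-\zeta)\|_{L^2}$ via the inverse inequality and \eqref{eq:error_P_h} to produce the factor $\min\{H+H^{-1}\eta,1\}$, absorbing the lower-order potential term by Young's inequality, and invoking the strong maximum principle for the localized estimate---is precisely the paper's approach.

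There is, however, a subtle organizational slip in your identity. If you test the continuous equation against $v$ and the discrete one against $P_Hv$, and then insist on writing the main diffusion contribution against $\nabla v$ as you do, the residual $R_H(v)$ necessarily contains the gradient pairing $(D_h^*\nabla\zeta_H(\sigma_H^*),\nabla(v-P_Hv))$, not merely $L^2$-pairings of the form $(\,\cdot\,,v-P_Hv)$. Since $\|\nabla(v-P_Hv)\|_{L^2}$ carries no factor of $H$ (only $H^1$-stability of $P_H$ gives $\|\nabla(v-P_Hv)\|_{L^2}\le c\|\nabla v\|_{L^2}$), this term contributes $c\alpha_2^{-1/2}\eta$ rather than $cH\alpha_2^{-1/2}\eta$. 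After the square root you would obtain $(\alpha_2^{-1/2}\eta)^{1/2}$, which is strictly weaker than the stated $(\alpha_2^{-1/2}\eta(\min\{H+H^{-1}\eta,1\}+\epsilon))^{1/2}$; under the parameter choices of Remark~\ref{rem:error_potential_sigma} this degrades the final rate from $\epsilon^{1/2}$ to $O(1)$.

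The paper avoids this by keeping $P_H\varphi$ (not $\varphi$) in the main terms from the outset, writing
\[
((\sigma^\dag-\sigma_H^*)\zeta,\varphi)
=((\sigma^\dag-\sigma_H^*)\zeta,\varphi-P_H\varphi)
+(D_h^*\nabla\zeta_H-D^\dag\nabla\zeta,\nabla P_H\varphi)
+(\sigma_H^*(\zeta_H-\zeta),P_H\varphi),
\]
so the only residual is the pure $L^2$-term $((\sigma^\dag-\sigma_H^*)\zeta,\varphi-P_H\varphi)$, which genuinely picks up the factor $H$ via \eqref{eq:error_P_h}. With this correction your remaining estimates go through verbatim and coincide with the paper's proof.
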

\begin{proof}
	For any test function $\varphi\in W^{1,2}_0\II$, by weak formulation of $\zeta_H(\sigma_H^*)$ and $\zeta(\sigma^{\dag})$, we have
	\begin{align*}
		&\left( (\sigma^{\dag}-\sigma_H^*)\zeta(\sigma^{\dag}),\varphi \right)\\
		=&\left( (\sigma^{\dag}-\sigma_H^*)\zeta(\sigma^{\dag}),\varphi-P_H\varphi \right)+\left( (\sigma^{\dag}-\sigma_H^*)\zeta(\sigma^{\dag}),P_H\varphi \right) \\
		=&\left( (\sigma^{\dag}-\sigma_H^*)\zeta(\sigma^{\dag}),\varphi-P_H\varphi \right) +\left(D_h^*\nabla \zeta_H(\sigma_H^*)-D^{\dag}\nabla \zeta(\sigma^{\dag}),\nabla P_H\varphi \right)+\left(\sigma_H^*(\zeta_H(\sigma_H^*)-\zeta(\sigma^{\dag})),P_H\varphi \right)\\
		=:& \sum_{j=1}^n \mathrm{I}_j.
	\end{align*}
	Now, we take $\varphi=(\sigma^{\dag}-\sigma_H^*)\zeta(\sigma^{\dag})$. A direct computation implies $\varphi\in W^{1,2}_0\II$ and
	$$\| \varphi\|_{L^2\II}\le c\quad \text{and} \quad \|  \nabla\varphi\|_{L^2\II}\le c(1+\| \nabla\sigma_H^*\|_{L^2\II}).$$
	By the box constraint of $\A_{\sigma,H}$, the error estimate of $P_H$ in \eqref{eq:error_P_h} and Lemma \ref{lem:zhsh*-zs},
	we have
	\begin{equation}\label{eqn:est-sig-1}
		\begin{aligned}
			|\mathrm{I}_1| \le& \|  (\sigma^{\dag}-\sigma_H^*)\zeta(\sigma^{\dag})\|_{L^{2}\II} \|  \varphi-P_H\varphi\|_{L^2\II}
			\le cH\| (\sigma^{\dag}-\sigma_H^*)\zeta(\sigma^{\dag})\|_{L^{2}\II}\| \nabla\varphi\|_{L^2\II}\\
			\le &cH\| (\sigma^{\dag}-\sigma_H^*)\zeta(\sigma^{\dag})\|_{L^{2}\II}(1+\| \nabla\sigma_h^*\|_{L^2\II})
			\le cH^2\alpha_2^{-1}\eta^2+\frac{1}{3}\| (\sigma^{\dag}-\sigma_H^*)\zeta(\sigma^{\dag})\|_{L^{2}\II}^2.
		\end{aligned}
	\end{equation}
	By the stability of $P_H$ and Lemma \ref{lem:zhsh*-zs} we conclude
\begin{equation}\label{eqn:est-sig-2}
\begin{aligned}
	|\mathrm{I}_3| \le&\|  \sigma_H^* \|_{L^{\infty}\II}\|  \zeta_H(\sigma_H^*)-\zeta(\sigma^{\dag}) \|_{L^2\II}\|  P_H\varphi \|_{L^2\II}\\
	 		\le& c\|  \zeta_H(\sigma_h^*)-\zeta(\sigma^{\dag}) \|_{L^2\II}^2+\frac{1}{3}\|  \varphi\|_{L^{2}\II}^2\le c\eta^2+\frac{1}{3}\|  \varphi\|_{L^{2}\II}^2.
\end{aligned}
\end{equation}
	Finally, we turn to the term $\mathrm{I}_2$ and use the splitting
	$$ \mathrm{I}_{2}  =  \left(D_h^*\nabla \zeta_H(\sigma_H^*)-D_h^*\nabla \zeta(\sigma^{\dag}),\nabla P_H\varphi \right) + \left(D_h^*\nabla \zeta(\sigma^{\dag})-D^{\dag}\nabla \zeta(\sigma^{\dag}),\nabla P_H\varphi \right) =: \mathrm{I}_{2,1} +  \mathrm{I}_{2,2}. $$
	It is easy to observe
	\begin{align*}
		|  \mathrm{I}_{2,2} |           \le&  \| D_h^* - D^\dag \|_{L^2\II}  \|  \nabla \zeta(\sigma^\dag) \|_{L^\infty\II}  \| \nabla P_h\varphi\|_{L^2\II}
		\le c \alpha_2^{-\frac{1}{2}}\eta \epsilon.
	\end{align*}
	where we use the \textsl{a priori} estimate that $\|\nabla \zeta(\sigma^\dag) \|_{L^\infty\II} \le c$ and the assumption that $\|   D_h^* - D^\dag \|_{L^2\II} \le \epsilon$.
	Moreover, by Lemma \ref{lem:zhsh*-zs}, we have
	\begin{align*}
		|  \mathrm{I}_{2,1} |
		\le&  \|  D_h^*\|_{L^{\infty}\II} \|  \nabla \zeta_H(\sigma_H^*)- \nabla \zeta(\sigma^{\dag})\|_{L^2\II} \| \nabla P_h\varphi\|_{L^2\II}
		\le c \alpha_2^{-\frac{1}{2}}\eta \|  \nabla \zeta_H(\sigma_H^*)- \nabla \zeta(\sigma^{\dag})\|_{L^2\II}.
	\end{align*}
	By the inverse inequality \eqref{eq:inverse_ineq} and the approximation property \eqref{eq:error_P_h}, we have
	\begin{equation*}
		\begin{aligned}
			\|  \nabla \zeta_H(\sigma_H^*)- \nabla \zeta(\sigma^{\dag})\|_{L^2\II}
			\le&    \|  \nabla (\zeta_H(\sigma_H^*)-   P_H \zeta(\sigma^{\dag}))\|_{L^2\II} + \| \nabla (P_H \zeta(\sigma^{\dag})-  \zeta(\sigma^{\dag})) \|_{L^2\II} \\
			\le &   c \Big( H^{-1} \|  \zeta_H(\sigma_H^*)-   P_H \zeta(\sigma^{\dag}) \|_{L^2\II}
			+ H \|   \zeta(\sigma^{\dag})  \|_{H^2\II}\Big) \\
			\le & c   \big(  H^{-1} \eta + H \big).
		\end{aligned}
	\end{equation*}
	Meanwhile, using the stability estimate $\|  \nabla \zeta_H(\sigma_H^*)  \|_{L^2\II} + \| \nabla \zeta(\sigma^{\dag})  \|_{L^2\II} \le c$,
	we conclude that
	\begin{align*}
		|  \mathrm{I}_{2,1} |
		\le&   c \alpha_2^{-\frac{1}{2}}\eta  \min\big(  H^{-1} \eta + H, 1 \big).
	\end{align*}
	Thus we arrive at the estimate
	\begin{align}\label{eqn:est-sig-3}
		|  \mathrm{I}_{2}  |
		\le&   c \alpha_2^{-\frac{1}{2}}\eta  \big(\min\big(  H^{-1} \eta + H, 1 \big) + \epsilon\big).
	\end{align}
	Then the desired estimate follows immediately by combining the estimates \eqref{eqn:est-sig-1},  \eqref{eqn:est-sig-2} and  \eqref{eqn:est-sig-3}.
	Finally, if $f_2-f_1\ge c>0$, with the same argument as that of Theorem \ref{thm:stability_sigma}, we have the second assertion.
\end{proof}

\begin{remark}\label{rem:error_potential_sigma}
	According to Theorem \ref{thm:error_weighted_diffusion} and Remark \ref{rem:error_diffusion_D}, we have
	$$ \| D_h^*  - D^\dag   \|  \le \epsilon =   c\delta^{\frac{1}{4(1+\beta)}},$$
	provided that  $h \sim \sqrt\delta$, $\alpha_1 \sim \delta^2$ and the positivity condition \eqref{eq:positive_condition} is valid.
	As a result, with the choice of parameters $H\sim  \epsilon^{\frac{1}{2}}$ and $\alpha_2\sim   \epsilon^2$, there holds the estimate
	\begin{equation*}
		\| (\sigma^{\dag}-\sigma_H^*)\zeta (\sigma^{\dag})\|_{L^2\II}\le c \delta^{\frac{1}{8(1+\beta)}}.
	\end{equation*}
	Moreover, if we choose source terms such that $f_2 - f_1 \ge c >0$ in $\Omega$, then similar argument as that of Theorem \ref{thm:stability_sigma} implies that
	\begin{equation*}
		\| \sigma^{\dag}-\sigma_H^*\|_{L^2(\Omega')}\le c  \delta^{\frac{1}{8(1+\beta)}}
	\end{equation*}
	where $\Omega' \Subset \Omega$ and  the constant $c$ depending on $\mathrm{dist}(\Omega',\partial\Omega)$ and  $D^{\dag},\sigma^{\dag}$.
\end{remark}

\begin{remark}\label{rem:coupled_reconstruction}
Instead of adopting the aforementioned decoupled approach, alternative reconstruction formulas exist to address the inverse problem. One intuitive method involves the following least-squares formulation:
\begin{equation}\label{eqn:coupled_reconstruction}
\underset{D_h\in \mathcal{A}_{D,h},\sigma_h\in \mathcal{A}_{\sigma,h}  }{\mathrm{min}} \mathcal{J}(D_h,\sigma_h)=\frac{1}{2}\sum_{i=1}^{2}\|u_{i,h}-z_i^{\delta}\|_{L^2(\Omega)}^2+\frac{\alpha_1}{2}\|\nabla D_h\|_{L^2(\Omega)}^2+\frac{\alpha_2}{2}\|\nabla \sigma_h\|_{L^2(\Omega)}^2.
\end{equation}
where $u_{i,h}=u_{i,h}(D_h,\sigma_h)\in V_h$ satisfies $u_{i,h}(D_h,\sigma_h)|_{\partial\Omega} = \mathcal{I}_h g$ and
\begin{equation}\label{eqn:coupled_eq}
 ( D_h \nabla u_{i,h},\nabla v_h ) + (\sigma_h u_{i,h},v_h) = (f_i, v_h)\quad \forall~ v_h \in V_h^0.
\end{equation}
Due to the non-homogeneous boundary condition, deriving an error estimate similar to Theorem \ref{thm:error_weighted_diffusion} and \ref{thm:error_weighted_potential} for the coupled reconstruction formula \eqref{eqn:coupled_reconstruction}-\eqref{eqn:coupled_eq} and its numerical discretization is not feasible. Furthermore, a numerical comparison between the decoupled approach and the coupled approach is presented in Section \ref{sec:numer}.
\end{remark}

\section{Inverse problem for parabolic equation}\label{sec:parab}
In this section, we extend the argument to the following parabolic problem with exact conductivity $D^\dag$ and potential $\sigma^\dag$
\begin{equation}\label{eq:origin_parabolic}
	\left\{
	\begin{aligned}
		\partial_t u-\nabla \cdot (D^\dag \nabla u)+\sigma^\dag u&=f,\quad \text{in }\Omega\times (0,T],\\
		u&=g,\quad \text{on }\partial\Omega\times (0,T],\\
		u(0)&=u_0,\quad \text{in }\Omega.
	\end{aligned}
	\right.
\end{equation}
We aim to reconstruct the conductivity $D^\dag$ and the potential $\sigma^\dag$ by observing $u(x,t)$ for $(x,t) \in (T_i - \theta, T_i] \times \Omega$, where $i = 1, 2$. Here, $\theta > 0$ represents a small positive constant.

The inverse problem under consideration generally does not allow for unique recovery. To illustrate this, let's examine a one-dimensional example within the unit interval $\Omega = (0,1)$, where $g=0$ and $f=0$. Suppose $u_0(x) = \sin(\pi x)$. In this case, both of the following parameter sets yield identical solutions $u(x,t)$ for all $(x,t) \in \Omega \times [0,\infty)$:
\begin{itemize}
	\item[(1)] $D(x) = 1$ and $\sigma(x) = \pi^2$,
	\item[(2)] $D(x) = 2$ and $\sigma(x) = 0$;
\end{itemize}
As a result, the recovery process is highly sensitive to the choice of problem data. Therefore, it is necessary to impose certain assumptions on the problem data in the parabolic problem given by equation \eqref{eq:origin_parabolic}.

\begin{assumption}\label{assume:coeff_positive_para}
	The problem data in \eqref{eq:origin_parabolic} satisfy following properties.
	\begin{itemize}
		\item[(i)] The initial data $u_0\in W^{2,2}(\Omega)\cap W^{1,\infty}\II$ and $u_0 \ge c_0>  0$.
		\item[(ii)] The source data $f\in C^k(0,T;L^{\infty}(\Omega) )\cap C^{k+1}(0,T;L^{2}(\Omega) ) $ with $k\in \mathbb{N}^*$ and $f \ge 0$. Moreover, there exists $T_0>0$ such that
		\begin{equation}\label{eqn:f}
			f(x,t)=\left\{\begin{aligned}
				f_1(x)&,\quad 0<t\le T_0\\
				f_2(x)&,\quad t\ge2T_0,
			\end{aligned}\right.
		\end{equation}
		\item[(iii)] The boundary data $g\in  W^{\frac32,2}(\partial\Omega) \cap W^{1,\infty}(\partial\Omega)$ and $g\ge c_g>0$.
		\item[(iv)] The exact diffusion coefficient $D^\dag \in \A_D \cap W^{1,\infty}\II$ and the exact potential coefficient $\sigma^\dag \in \A_\sigma $.
	\end{itemize}
\end{assumption}
Under Assumption \ref{assume:coeff_positive_para}, the parabolic equation \eqref{eq:origin_parabolic} admits a unique solution $u \in L^{\infty}(0,T;  W^{1,\infty}(\Omega))$. Moreover, the parabolic maximum principle \cite[Section 7.1.4]{evans:book2022partial} implies that there exists a constant $\underline{c}_u$ depending on $D^\dag$, $\sigma^\dag$, $g$ and $\Omega$, but independent of $f$ such that
\begin{equation}\label{eqn:pos-parab}
	u(x,t)\ge \underline{c}_u  > 0, \quad \forall ~x,t\in \overline \Omega\times[0,T].
\end{equation}

\subsection{Stability estimate}
We first give \textsl{a priori} estimate which will be frequently used in the stability analysis.
We denote $A=A(D^\dag,\sigma^\dag)$ be the realization of $-\nabla\cdot D^\dag\nabla +\sigma^\dag$ with a zero Dirichlet boundary condition.
Note that the operator $A$ satisfies the following  resolvent estimate
\begin{equation}\label{eqn:resolv}
	\|  (z+A)^{-1} \|_{L^p\II\rightarrow L^p\II} \le c (1+|z|)^{-1},  \quad \forall z\in \Sigma_{\phi},
\end{equation}
where $\Sigma_{\phi}=\{0\ne z\in \mathbb{C}:|\arg(z)|\le \phi\}$ with a fixed $\phi\in (\pi/2,\pi)$.
If   $D^\dag \in \A_D$
and $\sigma^\dag \in \A_\sigma$, then the resolvent estimate \eqref{eqn:resolv} with $p=2$ holds for a constant
$c$ independent of $D^\dag$ and $\sigma^\dag$  (but depending on $\underline{c}_D$, $\bar{c}_D$ and
$\bar{c}_\sigma$ in \eqref{eq:admissible_set}). This could be easily proved by using a standard energy argument; see e.g.,
\cite[p. 92]{Thomee:2006}. Moreover, if $D^\dag \in W^{1,\infty}\II$, then  the resolvent estimate \eqref{eqn:resolv} holds for $p=\infty$ (cf. \cite[Theorem 1]{Stewart:1974}, \cite[Theorem 2.1]{Bakaev:2003} and \cite[Appendix  A]{LiMa:2022}).

Then we  introduce the solution operator
\begin{equation}\label{op:E}
	E(t)=\frac{1}{2\pi i}\int_{\Gamma_{\phi,\kappa }}e^{zt}(z+A)^{-1}\d z.
\end{equation}
Here $\Gamma_{\phi,\kappa}=\{z\in \mathbb{C}:|z|=\kappa,|\arg(z)|\le \phi\}\cup\{z\in \mathbb{C}:z=\rho e^{i\phi},\rho\ge \kappa\}$
with fixed constants $\kappa\in (0,\infty)$ and $\phi\in(\pi/2,\pi)$.
Then the solution $u$ to the parabolic problem \eqref{eq:origin_parabolic} could be written as
\begin{equation}\label{eqn:sol_reps}
	u(t)=E(t)(u_0-\bar{g})+\int_0^t E(s)f(t-s)\d s,
\end{equation}
where $\bar{g}$ satisfies elliptic equation $A\bar{g}=0$ with Dirichlet boundary condition $\bar g|_{\partial\Omega} = g$.
Under Assumption \ref{assume:coeff_positive_para}, by the elliptic regularity theory and maximum principle, $\bar{g}\in W^{1,\infty}(\Omega)\cap W^{2,2}(\Omega)$ and has a strictly positive lower  bound.

Next, we provide a selection of valuable smoothing properties associated with the solution operator $E(t)$.

\begin{lemma}\label{lem:sm_prop_E}
	Let $E(t)$ be the solution opetator defined in \eqref{op:E}.  Suppose that  $D^\dag \in \A_D$
	and $\sigma^\dag \in \A_\sigma$. Then there exists a constant $c$ independent of $D^\dag$ and $\sigma^\dag$
	(but depending on $\underline{c}_D$, $\bar{c}_D$ and
	$\bar{c}_\sigma$ in \eqref{eq:admissible_set}) such that for any nonnegative integer $\ell$,
	\begin{equation*}
		\|   E^{(\ell)}(t)\|_{L^{2}\II\rightarrow L^2\II}\le c \min(t^{-\ell-1},t^{-\ell}).
	\end{equation*}
	Moreover, if $D^\dag \in W^{1,\infty}\II$, then there holds
	\begin{equation*}
		\|   E^{(\ell)}(t)\|_{L^{\infty}\II\rightarrow L^{\infty}\II}\le c \min(t^{-\ell-1},t^{-\ell}).
	\end{equation*}
\end{lemma}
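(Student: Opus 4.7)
The plan is to differentiate the contour representation \eqref{op:E} under the integral, giving
\[
E^{(\ell)}(t) = \frac{1}{2\pi i}\int_{\Gamma_{\phi,\kappa}} z^{\ell} e^{zt}(z+A)^{-1}\,\d z,
\]
and then estimate the integral using the resolvent bound \eqref{eqn:resolv}. The short-time factor $t^{-\ell}$ is the standard analytic-semigroup bound, while the sharper long-time factor $t^{-\ell-1}$ reflects a positive spectral lower bound of $A$ that must be made uniform over the admissible sets.

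The first step is to upgrade \eqref{eqn:resolv} to a sector shifted away from the imaginary axis. Since $\sigma^\dag \ge 0$ and $D^\dag \ge \underline{c}_D > 0$, a standard energy test of the eigenvalue problem combined with Poincar\'e's inequality on the bounded $\Omega$ yields $\lambda(A) \ge \lambda_0 > 0$ with $\lambda_0$ depending only on $\underline{c}_D$ and $\Omega$. Consequently \eqref{eqn:resolv} persists on the shifted sector $-\lambda_0 + \Sigma_\phi$ with the same type of uniform constant --- for $p=2$ by the energy argument of \cite[p.~92]{Thomee:2006}, and for $p=\infty$ by the cited resolvent results \cite{Stewart:1974,Bakaev:2003,LiMa:2022}, which give a bound depending only on $\|D^\dag\|_{W^{1,\infty}\II}$ and $\bar c_\sigma$.

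With this in hand, the short-time bound follows by the textbook trick of deforming $\Gamma_{\phi,\kappa}$ so that its circular arc has radius $\max(1/t,\kappa)$; after the substitution $s=tz$ the two ray contributions and the arc contribution all scale as $t^{-\ell}$, giving $\|E^{(\ell)}(t)\|_{L^p\II\to L^p\II} \le c\,t^{-\ell}$ for all $t>0$. For the long-time bound I would use the semigroup identity $E^{(\ell)}(t) = (-A)^{\ell} E(t/2)\cdot E(t/2)$. Applying the short-time estimate to the first factor and representing $E(t/2)$ along a contour with $\mathrm{Re}(z) \le -\lambda_0/2$ for the second yields
\[
\|E^{(\ell)}(t)\|_{L^p\II\to L^p\II} \le c\,(t/2)^{-\ell}\, e^{-\lambda_0 t/2}, \qquad t \ge 1,
\]
and since $e^{-\lambda_0 t/2}$ decays faster than any polynomial, this is bounded by $c\,t^{-\ell-1}$. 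Combining the two regimes produces the claimed $\min(t^{-\ell-1},t^{-\ell})$.

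The main obstacle will be keeping every constant uniform in $D^\dag$ and $\sigma^\dag$ as the contour is deformed and the spectral shift is applied. For $p=2$ this is automatic from the coercivity-based energy estimate. For $p=\infty$ the cited resolvent bounds are sensitive to the smoothness of $D^\dag$, so one must verify that shifting by $\lambda_0$ does not spoil their $W^{1,\infty}$-dependence; this is the case because $\lambda_0$ itself is controlled purely by $\underline{c}_D$ and $\Omega$, so the shifted resolvent estimate inherits exactly the same type of constants as the unshifted one.
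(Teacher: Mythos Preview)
Your proposal is correct but takes a longer route than the paper. The paper exploits the fact that the resolvent bound \eqref{eqn:resolv} already reads $\|(z+A)^{-1}\| \le c(1+|z|)^{-1}$ rather than merely $c|z|^{-1}$; the extra ``$1+$'' in the denominator is precisely what encodes the spectral gap that you go on to establish explicitly via Poincar\'e. Consequently the paper obtains both the $t^{-\ell}$ and the $t^{-\ell-1}$ bounds in a single computation: differentiate under the contour integral, apply \eqref{eqn:resolv}, split $\Gamma_{\phi,\kappa}$ into its rays and arc, and take $\kappa = t^{-1}$, whereupon the integrand $|z|^{\ell}(1+|z|)^{-1}$ automatically yields $t^{-\ell}$ for $t\le 1$ and $t^{-\ell-1}$ for $t\ge 1$. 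Your approach --- separating short and long time, deriving an explicit lower bound $\lambda_0$ on the spectrum, and then using the semigroup factorisation $E^{(\ell)}(t)=(-A)^{\ell}E(t/2)\,E(t/2)$ together with exponential decay of the second factor --- is valid and arguably makes the mechanism more transparent, but it costs you the extra work of shifting the sector and, in the $L^\infty$ case, of arguing that the cited maximum-norm resolvent estimates survive that shift with uniform constants. The paper's single contour scaling avoids all of that.
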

\begin{proof}
The proof of the lemma follows by the contour integral \eqref{op:E} and the resolvent estimate \eqref{eqn:resolv}. If  $D^\dag \in \A_D$ and $\sigma^\dag \in \A_\sigma$, then the resolvent estimate \eqref{eqn:resolv} with $p=2$ holds for a constant $c$ depending on $\underline{c}_D$, $\bar{c}_D$ and
$\bar{c}_\sigma$ in \eqref{eq:admissible_set}. Then for $\ell \ge 0$ we have
\begin{align*}
		\|  E^{(\ell)}(t)\|_{L^2\II\rightarrow  L^2\II}&\le c\int_{\Gamma_{\phi,\kappa }}|z|^\ell |e^{z t}|\| (z+A)^{-1}\|_{L^{2}\II\rightarrow  L^{2}\II} |\d z|
		\le C\int_{\Gamma_{\phi,\kappa }}|e^{z t}| |z|^\ell (1+|z|)^{-1}  |\d z| \\
		&  \le c \Big(\int_ \kappa^\infty e^{- st \cos \phi}\frac{s^\ell}{s+1}  \,\d s +  \int_{-\phi}^\phi e^{- \kappa t \cos\psi} \frac{\kappa^{\ell+1}}{1+\kappa}  \,\d \psi \Big)
		\le c \min(t^{-\ell-1},t^{-\ell}).
\end{align*}
where we take $\kappa=t^{-1}$ in the last inequality.
The estimate in maximum-norm could be derived similarly using the resolvent estimate \eqref{eqn:resolv} with $p=\infty$.
\end{proof}\vskip5pt
Lemma  \ref{lem:sm_prop_E} immediately leads to the next lemma showing the decay properties of the solution. \vskip5pt

\begin{lemma}\label{lem:dtu}
	Suppose that Assumption \ref{assume:coeff_positive_para} (i)-(iii) holds valid.
	Let $u$ be the solution of the parabolic problem \eqref{eq:origin_parabolic}.
	If $D^\dag \in \A_D$
	and $\sigma^\dag \in \A_\sigma$, then  for $\ell=1,\cdots,k+1$,  there holds
	$$\|   \partial_t^{\ell}u(t)\|_{L^{2}\II} \le c \max(t^{-\ell},1),\quad \forall ~t\in  (0,\infty).$$
	Moreover, if $D^\dag \in W^{1,\infty}\II$, then there holds
	\begin{equation*}
		\|  \partial_t u(t)\|_{L^{\infty}\II} \le
		\begin{cases}
			c t^{-1},  &\quad \forall~ t\in (0,T_0],  \\
			c(t-2T_0)^{-1},& \quad \forall ~t\in  (2T_0,\infty).
		\end{cases}
	\end{equation*}
\end{lemma}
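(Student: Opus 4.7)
The plan is to differentiate the Duhamel representation \eqref{eqn:sol_reps} in time and apply the smoothing estimates from Lemma \ref{lem:sm_prop_E}. Rewriting the convolution via $\tau = t-s$ as $w(t) = \int_0^t E(s) f(t-s)\,\d s$ and iterating the Leibniz rule gives
\begin{equation*}
\partial_t^\ell u(t) = E^{(\ell)}(t)(u_0 - \bar g) + \sum_{j=0}^{\ell-1} E^{(\ell-1-j)}(t)\, f^{(j)}(0^+) + \int_0^t E(s)\, f^{(\ell)}(t-s)\,\d s.
\end{equation*}
The key observation is that the piecewise definition \eqref{eqn:f} forces $f^{(j)}(0^+) = 0$ for $j \ge 1$ and $f(0^+) = f_1$, so the sum collapses to the single term $E^{(\ell-1)}(t)\, f_1$.

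For the $L^2$ bound I would apply Lemma \ref{lem:sm_prop_E} to obtain $\|E^{(\ell)}(t)(u_0 - \bar g)\|_{L^2} \le c\min(t^{-\ell-1}, t^{-\ell})$ and $\|E^{(\ell-1)}(t) f_1\|_{L^2} \le c\min(t^{-\ell}, t^{1-\ell})$; a short case split at $t = 1$ shows that both are dominated by $c\max(t^{-\ell}, 1)$. For the remaining convolution, the piecewise constancy of $f$ on $(0,T_0]$ and $[2T_0,\infty)$ forces $f^{(\ell)}(t-s)$ to vanish outside $s \in (\max(0, t-2T_0),\, t-T_0)$, an interval of length at most $T_0$. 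Combined with the uniform bound $\|E(s)\|_{L^2 \to L^2} \le c\min(s^{-1}, 1)$ on that interval, this keeps the integral bounded in $t$, completing the $L^2$ estimate.

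For the $L^\infty$ estimate on $(0, T_0]$, the fact that $f(t-s) = f_1$ for all $s \in (0,t)$ whenever $t \le T_0$ reduces the convolution to $\int_0^t E(s) f_1 \,\d s$, whose time derivative is simply $E(t) f_1$. Thus $\partial_t u(t) = E'(t)(u_0 - \bar g) + E(t) f_1$, and the $L^\infty$ smoothing available under $D^\dag \in W^{1,\infty}(\Omega)$ gives $\|\partial_t u(t)\|_{L^\infty} \le c\min(t^{-2}, t^{-1}) + c \le c t^{-1}$ on the whole interval $(0, T_0]$. For $(2T_0, \infty)$ I would take a different route: introduce the stationary solution $u_\infty$ defined by $-\nabla\cdot(D^\dag \nabla u_\infty) + \sigma^\dag u_\infty = f_2$ in $\Omega$ with $u_\infty = g$ on $\partial\Omega$. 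Since $f \equiv f_2$ on $[2T_0,\infty)$, the difference $W := u - u_\infty$ satisfies $\partial_t W + AW = 0$ with homogeneous Dirichlet data, so $W(t) = E(t - 2T_0)(u(2T_0) - u_\infty)$ and hence $\partial_t u(t) = E'(t - 2T_0)(u(2T_0) - u_\infty)$. The parabolic maximum principle \eqref{eqn:pos-parab} bounds $\|u(2T_0)\|_{L^\infty}$, and elliptic $L^\infty$ theory bounds $\|u_\infty\|_{L^\infty}$; the $L^\infty$ smoothing in Lemma \ref{lem:sm_prop_E} then yields $\|\partial_t u(t)\|_{L^\infty} \le c(t-2T_0)^{-1}$ on $(2T_0,\infty)$.

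The main subtlety is the careful bookkeeping of the convolution term $\int_0^t E(s) f^{(\ell)}(t-s)\,\d s$, and in particular the passage through the transition window $(T_0, 2T_0)$: a naive estimate using $\|E(s)\|_{L^2\to L^2} \le c\min(s^{-1},1)$ on all of $(0,t)$ would produce a spurious logarithmic growth in $t$, and the localization of $\operatorname{supp} f^{(\ell)}$ to a fixed-length interval is what saves the bound. The clean decomposition via $u_\infty$ on $[2T_0,\infty)$ is essential for obtaining the sharp shifted decay $(t-2T_0)^{-1}$ rather than the looser global rate $t^{-1}$.
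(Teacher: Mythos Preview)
Your argument is correct. For the $L^2$ estimate and the $L^\infty$ bound on $(0,T_0]$ you follow exactly the paper's route: differentiate the Duhamel formula, collapse the boundary sum using $f^{(j)}(0^+)=0$ for $j\ge1$, and apply Lemma~\ref{lem:sm_prop_E}. Your treatment of the convolution term is in fact more explicit than the paper's, which simply writes $\int_0^t\|E(s)\|\,\|f^{(\ell)}(t-s)\|\,\d s\le c$ without spelling out the support localization.

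For the $L^\infty$ bound on $(2T_0,\infty)$ you take a genuinely different route. The paper keeps the full Duhamel expression $\partial_t u(t)=E'(t)(u_0-\bar g)+E(t)f(0)+\int_0^tE(s)\partial_tf(t-s)\,\d s$, notes that $\partial_tf$ is supported in $(T_0,2T_0)$, and bounds the integral by $\int_{t-2T_0}^{t-T_0}s^{-1}\,\d s\le c(t-2T_0)^{-1}$. Your approach instead introduces the stationary solution $u_\infty$ with source $f_2$ and writes $\partial_t u(t)=E'(t-2T_0)(u(2T_0)-u_\infty)$, reading off the decay directly from the semigroup smoothing. Your version is arguably cleaner---it produces the shifted rate in one step and avoids the integral estimate---at the price of introducing and bounding the auxiliary elliptic solution $u_\infty$. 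One small correction: the reference \eqref{eqn:pos-parab} gives a \emph{lower} bound on $u$; the $L^\infty$ upper bound on $u(2T_0)$ you need comes from the regularity statement $u\in L^\infty(0,T;W^{1,\infty}(\Omega))$ stated just before \eqref{eqn:pos-parab}.
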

\begin{proof}
	By solution representation \eqref{eqn:sol_reps}, we may write
	\begin{equation*}
		\partial_{t}^{\ell} u(t)=E^{(\ell)}(t)(u_0-\bar{g})+\sum_{i=0}^{\ell-1}E^{(i)}(t)f^{(\ell-1-i)}(0) +\int_0^t E(s) f^{(\ell)}(t-s)\d s.
	\end{equation*}
	According to Assumption \ref{assume:coeff_positive_para} (iii), we have $ f^{(i)}(0)=0$, $i=1,\cdots,\ell-1$. Then Lemma \ref{lem:sm_prop_E}  implies
	\begin{align*}
		\| \partial_{t}^{\ell} u(t)\|_{L^{2}\II} \le&  \|  E^{(\ell)}(t)\|_{L^{2}\II\rightarrow L^{2}\II} \|   u_0-\bar{g}  \|_{L^{2}\II}+ \|  E^{(\ell-1)}(t)\|_{L^{2}\II\rightarrow L^{2}\II} \|   f(0) \|_{L^{2}\II}\\
		&+\int_0^t\|E(s)\|_{L^{2}\II\rightarrow L^{2}\II}\|  f^{(\ell)}(t-s)\|_{L^{2}\II}\d s\\
		\le& ct^{-\ell}+ct^{-\ell+1}+c\le c\max(t^{-\ell},1).
	\end{align*}
	
	Now we assume that $D^\dag \in W^{1,\infty}\II$. Similarly, with solution representation \eqref{eqn:sol_reps}, $\partial_t u$ could be written as
	\begin{equation*}
		\partial_t u(t)=E'(t)(u_0-\bar{g})+E(t)f(0)+\int_0^t E(s)\partial_t f(t-s)\d s
	\end{equation*}
	For any $t \in (0,T_0]$, we recall that $\partial_t f=0$ according to Assumption \ref{assume:coeff_positive_para}.
	Then the regularity of problem data in Assumption \ref{assume:coeff_positive_para} and
	Lemma \ref{lem:sm_prop_E}  lead to
	\begin{align*}
		\|  \partial_t u(t)\|_{L^{\infty}\II} \le \|E'(t)\|_{L^{\infty}\II\to  L^{\infty}\II}\|u_0-\bar{g}\|_{L^{\infty}\II}+\|E(t)\|_{L^{\infty}\II\to  L^{\infty}\II}\|f(0)\|_{L^{\infty}\II}\le c t^{-1}.
	\end{align*}
	Next, we turn to the case that $t \in (2T_0,\infty)$.
	Assumption \ref{assume:coeff_positive_para} leads to $\partial_t f\ne 0$ for $t\in (T_0,2T_0)$ and $\partial_t f \equiv 0$ otherwise. Then Lemma \ref{lem:sm_prop_E}   yields
	\begin{align*}
		\|  \partial_t u(t)\|_{L^{\infty}\II} \le&  \|  E'(t)\|_{L^{\infty}\II\rightarrow L^{\infty}\II} \|   u_0-\bar{g}  \|_{L^{\infty}\II}+ \|  E(t)\|_{L^{\infty}\II\rightarrow L^{\infty}\II} \|   f(0) \|_{L^{\infty}\II}\\
		&+\int_0^t\|E(s)\|_{L^{\infty}\II\rightarrow L^{\infty}\II}\|\partial_t f(t-s)\|_{L^{\infty}\II}\d s\\
		\le& Ct^{-1}+C\int_{t-2T_0}^{t-T_0} s^{-1}\d s\le C(t-2T_0)^{-1}.
	\end{align*}
	This completes the proof of the lemma.
\end{proof}

The stability estimation follows a similar approach to that of the elliptic case. First, we decouple the parameters, and then sequentially determine the stability for the diffusion coefficient $D$ and the potential $\sigma$. To achieve this, we select time intervals such that $0 < T_1 \le T_0$ and $T_2 \ge 2T_0$.  Multiplying the equation at time $T_1$ by $u(T_2)$ and at time $T_2$ by $u(T_1)$, after subtracting the two equations, we can eliminate the potential $\sigma$ and obtain
\begin{equation}\label{eq:parabolic_diffusion}
	\left\{
	\begin{aligned}
		-\nabla \cdot \left(Du^2(T_1)\nabla\left( \frac{u(T_2)}{u(T_1)}-1\right)\right)&=(f(T_2)-\partial_tu(T_2))u(T_1)-(f(T_1)-\partial_tu(T_1))u(T_2),\quad \text{in }\Omega\\
		\frac{u(T_2)}{u(T_1)}-1&=0,\quad \text{on }\partial\Omega.
	\end{aligned}
	\right.
\end{equation}
For ease of reference, let's introduce the following notation:
\begin{equation}\label{eqn:parab-notation}
	w:=\frac{u(T_2)}{u(T_1)}-1,\quad q:=D|u(T_1)|^2\quad \text{and}\quad F:=(f(T_2)-\partial_tu(T_2))u(T_1)-(f(T_1)-\partial_tu(T_1))u(T_2).
\end{equation}
Then the system \eqref{eq:parabolic_diffusion} could be written as the form \eqref{eq:elliptic_diffusion_0_bdry}.
Therefore, the following result is an immediate application of Theorem \ref{thm:stab-Du2} and we omit the proof.
\begin{assumption}\label{ass:para-2}
	The exact diffusion coefficient $q^{\dag} = D^\dag|u(T_1)|^2 \in W^{2,2}\II\cap W^{1,\infty}\II\cap \mathcal{A}_q$ and source term $F=(f(T_2)-\partial_tu(T_2))u(T_1)-(f(T_1)-\partial_tu(T_1))u(T_2)  \in L^{\infty}\II$.
\end{assumption}
\begin{theorem}\label{thm:stability_para_Du_1^2}
Suppose that  $F$, $\tilde{F}\in L^{\infty}\II$, $q$ satisfy Assumption \ref{ass:para-2} and $\tilde q\in \mathcal{A}_q$.
Also, suppose the $W^{1,2}\II$-norm of $q$ and $\tilde q$ are bounded by a generic constant $C$.
Let $w$ be the solution of \eqref{eq:parabolic_diffusion} with diffusion coefficient $q$ and source $F$, while  $\tilde{w}$ be the solution  with diffusion coefficient $\tilde{q}$ and source $\tilde{F}$. Then there holds
	\begin{align*}
		\int_{\Omega}\frac{(q-\tilde q)^2}{q^2}\left(q\left|\nabla w\right|^2+F w\right)\d x\le c\left(\|  w-\tilde w\|_{W^{1,2}(\Omega)}+\|  F-\tilde F\|_{L^2(\Omega)}\right).
	\end{align*}
	Moreover, if  the following positive condition holds
	\begin{align}\label{eq:positive_condition_para_D}
		q\left|\nabla w \right|^2+Fw\ge c\,\mathrm{dist}(x,\partial \Omega)^{\beta} \quad a.e. \text{ on }\Omega
	\end{align}
	for some $\beta\ge 0$ and $c>0$. Then the following estimate holds
	\begin{align*}
		\|  q-\tilde q\|_{L^2(\Omega)}\le c\left(\|  w-\tilde w\|_{W^{1,2}(\Omega)}+\|  F-\tilde F\|_{L^2(\Omega)}\right)^{\frac{1}{2(1+\beta)}}
	\end{align*}
\end{theorem}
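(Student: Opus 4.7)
The plan is to recognize that the system \eqref{eq:parabolic_diffusion} has exactly the same structural form as the elliptic problem \eqref{eq:elliptic_diffusion_0_bdry} treated in Theorem \ref{thm:stab-Du2}. With the notation in \eqref{eqn:parab-notation}, the parabolic-derived equation reads $-\nabla\cdot(q\nabla w)=F$ in $\Omega$ with $w=0$ on $\partial\Omega$, which is exactly the setting of Theorem \ref{thm:stab-Du2}. Hence it suffices to verify that all hypotheses of Theorem \ref{thm:stab-Du2} are in force in the present parabolic context, and then invoke that theorem verbatim.

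The first step is to confirm the admissibility and regularity of the coefficient $q^\dag = D^\dag|u(T_1)|^2$. The parabolic maximum principle \eqref{eqn:pos-parab} gives $u(T_1)\ge \underline{c}_u>0$, which together with $D^\dag\in\A_D$ yields $q^\dag\in\A_q$ with a uniform positive lower bound; the additional $W^{2,2}\II\cap W^{1,\infty}\II$ regularity is supplied by Assumption \ref{ass:para-2}. The second step is to check that the source term $F\in L^\infty\II$. The values $f(T_1)=f_1$, $f(T_2)=f_2$ are bounded by Assumption \ref{assume:coeff_positive_para}(ii), and the states $u(T_i)$ are bounded by the solution representation \eqref{eqn:sol_reps} together with Lemma \ref{lem:sm_prop_E}. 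The genuinely new ingredient relative to the elliptic case is the appearance of the time derivatives $\partial_t u(T_i)$. Here the choices $T_1\in(0,T_0]$ and $T_2\ge 2T_0$ are precisely made so that Lemma \ref{lem:dtu} supplies $\|\partial_t u(T_1)\|_{L^\infty\II}\le cT_1^{-1}$ and $\|\partial_t u(T_2)\|_{L^\infty\II}\le c(T_2-2T_0)^{-1}$, delivering $F\in L^\infty\II$.

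Once these verifications are in place, standard elliptic regularity for \eqref{eq:parabolic_diffusion} yields $w\in W^{1,2}_0\II\cap W^{2,2}\II\cap W^{1,\infty}\II$, matching the hypothesis on $w$ in Theorem \ref{thm:stab-Du2}. The first (weighted) assertion then follows from the same test-function argument as in the elliptic case: subtract the weak forms for $(q,F)$ and $(\tilde q,\tilde F)$ to produce the identity \eqref{eq:xi_nabla_nabla_1}, combine it with the identity \eqref{eq:xi_nabla_nabla_2} obtained by testing against $\xi v/q$, and pick $v = \xi w/q$ with $\xi = q-\tilde q$. The conditional $L^2$ estimate under the positivity assumption \eqref{eq:positive_condition_para_D} is then obtained by splitting $\Omega=\Omega_\rho\cup\Omega_\rho^c$ according to distance to $\partial\Omega$ and optimizing over $\rho$, exactly as in the final part of the proof of Theorem \ref{thm:stab-Du2}.

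The main potential obstacle is the handling of $\partial_t u(T_i)$, since this is the one structural difference from the elliptic setting; however, Lemma \ref{lem:dtu} together with the switching structure \eqref{eqn:f} of the source $f$ is tailored precisely to deliver $L^\infty\II$ control of these time derivatives at the chosen observation times. No genuinely new analytical idea is required beyond the elliptic argument, which is why the result can be stated as an immediate corollary of Theorem \ref{thm:stab-Du2}.
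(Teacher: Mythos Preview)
Your proposal is correct and matches the paper's approach exactly: the paper states that Theorem \ref{thm:stability_para_Du_1^2} is ``an immediate application of Theorem \ref{thm:stab-Du2}'' and omits the proof entirely, since with the notation \eqref{eqn:parab-notation} the parabolic-derived equation \eqref{eq:parabolic_diffusion} has precisely the form \eqref{eq:elliptic_diffusion_0_bdry}. Your additional verification of the hypotheses (positivity of $q$, $L^\infty$ control of $\partial_t u(T_i)$ via Lemma \ref{lem:dtu}) is more explicit than what the paper writes, but this is exactly the content the paper packages into Assumption \ref{ass:para-2} and the surrounding discussion.
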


\begin{remark}\label{rmk:positive_condtion_hold_para}
As suggested by Remark \ref{rmk:positive_condtion_hold}, the crucial aspect to ensure the positivity condition in \eqref{eq:positive_condition_para_D} is to establish the strict positivity of the function $F$. It is worth noting that $F$ contains the time derivative term $\partial_tu$, making the positive condition more intricate than that presented in Remark \ref{rmk:positive_condtion_hold}. Therefore, a careful examination is required.
Next, we demonstrate that the positive condition prevails under certain specific excitations $f$ and $g$.

We claim that the function $F$ in \eqref{eqn:parab-notation} could be strictly positive provided some restrictions on excitation $f$ and $g$. For example, with Assumption \ref{assume:coeff_positive_para}, we take a time dependent source term $f(x,t)$ which has a form of
\begin{equation*}
		f(x,t)=\left\{
		\begin{aligned}
			0&,\quad 0<t\le T_0\\
			c_f&,\quad t>2T_0,
		\end{aligned}
		\right.
\end{equation*}
for some $T_0$. Here we assume that $T_0$ is sufficient large, $T_1=T_0$, and $T_2 = 3T_0$. As a result, according to Lemma \ref{lem:dtu}, $\|\partial_t u(T_1)\|_{L^{\infty}\II} + \|\partial_t u(T_2)\|_{L^{\infty}\II}\le CT_0^{-1}$. Since $u$ has a strict positive lower bound  \eqref{eqn:pos-parab} and $\| u (t)\|_{L^\infty\II}\le c$ uniform in $t$, we conclude that for sufficiently large $T_0$, $$F \ge c_f \underline{c}_u-T_0^{-1}(\underline{c}_u + \|  u(T_2)  \|_{L^\infty\II})> c_F > 0.$$   Then the positivity condition \eqref{eq:positive_condition_para_D} holds valid for some $\beta\in[0,2]$.
\end{remark}\vskip5pt


The following corollary gives the estimate of $\|  F-\tilde F\|_{L^2\II}$ and hence the estimate of $\|  D-\tilde D\|_{L^2\II}$.
\begin{corollary}\label{coro:para_F-tF}
	Suppose that Assumption \ref{assume:coeff_positive_para} holds valid, $\tilde D \in \mathcal{A}_D$ and $\sigma\in \mathcal{A}_\sigma$.
	Let $u$ ($\tilde u$) be the solution to the parabolic equation \eqref{eqn:PDE-parab} with diffusion coefficient $D$ ($\tilde D$),
	potential coefficient $\sigma$ ($\tilde \sigma$), the boundary data $g$ and source $f$.
	Let $D\in W^{1,\infty}\II\cap \mathcal{A}_D$, $\sigma\in L^{\infty}\II\cap \mathcal{A}_{\sigma}$. Assume that $\| \tilde u(T_i) \|_{L^2\II} \le C$ for $i=1,2$.
	Then there holds
	\begin{align*}
		\|  F-\tilde F\|_{L^2\II}\le c \Big(\sum_{i=1}^2 \|  u-\tilde u\|_{L^{\infty}(T_i-\theta,T_i;L^2\II )}\Big)^{\frac{k}{k+1}}.
	\end{align*}
	If in addition, the positive condition \eqref{eq:positive_condition_para_D} holds for some $\beta\ge 0$, then
	\begin{align*}
		\|  D-\tilde D\|_{L^2\II}\le c\left(\sum_{i=1}^2\|(u-\tilde u)(T_i)\|_{W^{1,2}\II}+
		\Big(\sum_{i=1}^2 \|  u-\tilde u\|_{L^{\infty}(T_i-\theta,T_i;L^2\II )}\Big)^{\frac{k}{k+1}}\right)^{\frac{1}{2(1+\beta)}}.
	\end{align*}
\end{corollary}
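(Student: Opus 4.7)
The plan is to derive the two estimates sequentially. For the first estimate, the goal is to bound $\|F-\tilde F\|_{L^2\II}$, where, letting $v = u - \tilde u$,
\begin{equation*}
F - \tilde F = (f(T_2)-\partial_t u(T_2))v(T_1) + \tilde u(T_1)(\partial_t \tilde u(T_2)-\partial_t u(T_2)) - (f(T_1)-\partial_t u(T_1))v(T_2) - \tilde u(T_2)(\partial_t\tilde u(T_1)-\partial_t u(T_1)).
\end{equation*}
The ``easy'' pieces are the ones that carry $v(T_i)$ directly. Using Assumption \ref{assume:coeff_positive_para} to bound $f\in L^\infty$, applying Lemma \ref{lem:dtu} to bound $\partial_t u(T_i)$ in $L^\infty$, and using $\|\tilde u(T_i)\|_{L^2}\le C$, these contribute at most $c\sum_{i=1}^2\|v(T_i)\|_{L^2\II}$, which is controlled by $\sum_{i=1}^2 \|v\|_{L^{\infty}(T_i-\theta,T_i;L^2\II)}$.

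The main obstacle is the residual terms $\partial_t v(T_i) := \partial_t u(T_i)-\partial_t \tilde u(T_i)$ in $L^2\II$, since the assumption only controls $v$, not $\partial_t v$, at time $T_i$. Here I would apply a Landau--Kolmogorov type interpolation in time on the interval $I_i = [T_i-\theta, T_i]$. Specifically, for a smooth function $\phi: I_i \to L^2\II$, one has the one-sided bound
\begin{equation*}
\|\phi'(T_i)\|_{L^2\II} \le c\,\|\phi\|_{L^\infty(I_i; L^2\II)}^{k/(k+1)}\,\|\phi^{(k+1)}\|_{L^\infty(I_i; L^2\II)}^{1/(k+1)},
\end{equation*}
provable by evaluating finite-difference formulas of order $k+1$ and optimizing the step size. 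Applying this to $\phi = v$, Lemma \ref{lem:dtu} (applied to both $u$ and $\tilde u$, which is legitimate because $T_i-\theta > 0$ and hence $\partial_t^{k+1} u(t)$ is uniformly bounded on $I_i$) gives $\|\partial_t^{k+1} v\|_{L^\infty(I_i;L^2\II)}\le c$, and therefore
\begin{equation*}
\|\partial_t v(T_i)\|_{L^2\II}\le c\,\|v\|_{L^\infty(I_i; L^2\II)}^{k/(k+1)}.
\end{equation*}
Combined with the $L^\infty$ bounds on $u(T_1)$ and $\tilde u(T_2)$, this yields the asserted $\|F-\tilde F\|_{L^2\II}$ estimate, with the $k/(k+1)$ exponent dominating the exponent $1$ of the $v(T_i)$ terms (since the noise levels are small).

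For the second estimate, I would invoke Theorem \ref{thm:stability_para_Du_1^2} under the positivity condition, giving
\begin{equation*}
\|q-\tilde q\|_{L^2\II}\le c\bigl(\|w-\tilde w\|_{W^{1,2}\II}+\|F-\tilde F\|_{L^2\II}\bigr)^{1/(2(1+\beta))}.
\end{equation*}
To convert back to $D$ versus $\tilde D$, I would use $D = q/|u(T_1)|^2$ together with the strict positivity \eqref{eqn:pos-parab} and uniform boundedness of $u(T_1)$ and $\tilde u(T_1)$, mimicking the elliptic calculation preceding \eqref{eqn:stab-D}, so that
\begin{equation*}
\|D-\tilde D\|_{L^2\II}\le c\bigl(\|(u-\tilde u)(T_1)\|_{L^2\II}+\|q-\tilde q\|_{L^2\II}\bigr).
\end{equation*}
Finally, $w-\tilde w = u(T_2)/u(T_1)-\tilde u(T_2)/\tilde u(T_1)$ is controlled in $W^{1,2}\II$ by $\sum_i\|(u-\tilde u)(T_i)\|_{W^{1,2}\II}$, using the uniform positive lower bound on $u(T_1),\tilde u(T_1)$ together with $W^{1,\infty}$ regularity of $u(T_i)$ from the forward problem. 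Substituting the first estimate for $\|F-\tilde F\|_{L^2\II}$ yields the claimed bound. The hardest technical point remains the Landau--Kolmogorov interpolation at the endpoint $T_i$, where one must be careful with the one-sided nature of the interval and with verifying that the regularity supplied by Lemma \ref{lem:dtu} is uniform over the admissible class of $(D,\sigma)$ so that the constant $c$ is universal.
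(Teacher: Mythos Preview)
Your proposal is correct and follows essentially the same route as the paper: the paper also splits $F-\tilde F$ into ``easy'' $v(T_i)$-terms and the hard $\partial_t v(T_i)$-terms, and handles the latter by inserting a $k$th-order backward difference quotient $\partial_\tau$, bounding $\|\partial_t u(T_i)-\partial_\tau u(T_i)\|_{L^2}\le c\tau^k$ via Lemma~\ref{lem:dtu} and $\|\partial_\tau v(T_i)\|_{L^2}\le c\tau^{-1}\|v\|_{L^\infty(I_i;L^2)}$, then optimizing over $\tau$ --- which is exactly the finite-difference proof of the one-sided Landau--Kolmogorov inequality you invoke. The second estimate in both cases is obtained by feeding this into Theorem~\ref{thm:stability_para_Du_1^2} and passing from $q$ to $D$ via the positivity and regularity of $u(T_1)$, just as you outline.
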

\begin{proof}
	By definition in \eqref{eqn:parab-notation}, $F-\tilde F$ can be written as
	\begin{align*}
		F-\tilde F&=f(T_2) \left( u(T_1)-\tilde u(T_1) \right)  +f(T_1) \left( \tilde u(T_2)-u(T_2) \right)\\
		&+u(T_2)\partial_tu(T_1)-\tilde u(T_2)\partial_t\tilde u(T_1)+\tilde u(T_1)\partial_t \tilde u(T_2)- u(T_1)\partial_t u(T_2)
	\end{align*}
	The first two terms can be easily bounded by
	$$\| f(T_2) \left( u(T_1)-\tilde u(T_1) \right)  +f(T_1) \left( \tilde u(T_2)-u(T_2) \right) \|_{L^2\II} \le  c \sum_{i=1}^2\|(u-\tilde u)(T_i)\|_{L^2\II}.$$
	Inserting an intermediate term $\tilde u(T_2)\partial_t u(T_1)$, we have
	\begin{align*}
		\|  u(T_2)\partial_tu(T_1)-\tilde u(T_2)\partial_t\tilde u(T_1)\|_{L^2\II}&\le \|  \tilde u(T_2)\|_{L^{\infty}\II}\|  \partial_tu(T_1)-\partial_t\tilde u(T_1)\|_{L^2\II}\\
		&+ \| \partial_t  u(T_1) \|_{L^{\infty}\II}\|  u(T_2)- \tilde u(T_2) \|_{L^2\II }
	\end{align*}
	By assumption, we have $\|\tilde u(T_2)\|_{L^{\infty}\II}\le C$. Meanwhile, Lemma \ref{lem:dtu}  implies $\|  \partial_t  u(T_1)\|_{L^{\infty}\II}\le CT_1^{-1}\le C$.
	These together imply
	\begin{align*}
		\|  u(T_2)\partial_tu(T_1)-\tilde u(T_2)\partial_t\tilde u(T_1)\|_{L^2\II}&\le C\left(\| \partial_tu(T_1)-\partial_t\tilde u(T_1)\|_{L^2\II}	+ \|  u(T_2)- \tilde u(T_2) \|_{L^2\II }\right).
	\end{align*}
	To analyze the term $ \|  \partial_tu(T_1)-\partial_t\tilde u(T_1)\|_{L^2\II}$, we insert the backward difference quotient of order $k$,
	\begin{equation*}
		\partial_\tau u(T_1) = \tau^{-1} \sum_{j=0}^{k}a_{j}u(T_1-j\tau)\quad \text{and}\quad \partial_\tau \tilde{u}(T_1) = \tau^{-1} \sum_{j=0}^{k}a_{j}\tilde{u}(T_1-j\tau),
	\end{equation*}
	for some $0<\tau <\theta/k$, where $\{a_j\}_{j=1}^{k}$  are backward difference quotient coefficients. By Lemma \ref{lem:dtu}, we have
	$ \|  \partial_{t}^{k+1} u  \|_{L^\infty(T_1-\theta,T_1;L^2\II)}\le c $, and hence by Taylor's expansion we obtain
	\begin{equation}\label{eqn:err-quot}
		\begin{aligned}
			\|  \partial_t u(T_1)-\partial_\tau u(T_1)\|_{L^2\II}\le c\tau^k  \|  \partial_{t}^{k+1} u  \|_{L^\infty(T_1-\theta,T_1;L^2\II)} \le c \tau^k.
		\end{aligned}
	\end{equation}
	Then we obtain
	\begin{align*}
		\| u(T_2)\partial_tu(T_1)-\tilde u(T_2)\partial_t\tilde u(T_1)\|_{L^2\II}&\le c\left(\tau^k+\tau^{-1}\| u-\tilde u\|_{L^{\infty}(T_1-\theta,T_1;L^2\II )}
		+\| (u-\tilde u)(T_2)\|_{L^2\II}\right).
	\end{align*}
	The bound for $\| u(T_1)\partial_tu(T_2)-\tilde u(T_1)\partial_t\tilde u(T_2)\|_{L^2\II}$ can be obtained similarly.
	Consequently, we arrive  at
	\begin{align*}
		\|  F-\tilde F\|_{L^2\II}\le c\Big(\tau^k + \tau^{-1}\sum_{i=1}^2 \|  u-\tilde u\|_{L^{\infty}(T_i-\theta,T_i;L^2\II )}\Big)
	\end{align*}
	Then the choice $\tau = \left(\sum_{i=1}^2 \|  u-\tilde u\|_{L^{\infty}(T_i-\theta,T_i;L^2\II )}\right)^{\frac{1}{k+1}}$ leads to the desired estimate.
	The second assertion of the corollary is a direct consequence of applying Theorem \ref{thm:stability_para_Du_1^2}.
\end{proof}

Having obtained the reconstructed conductivity, we can now proceed to recover the potential. The subsequent theorem offers conditional stability for the potential's recovery.
\begin{theorem}\label{thm:stability_sigma_para}
	Let the assumptions in Corollary \ref{coro:para_F-tF} hold true. Then there holds
	\begin{align*}
		\|  (\sigma-\tilde\sigma) & (u(T_2)-u(T_1))\|_{L^2\II}\\
		&\le c\left( \Big(\sum_{i=1}^2 \|  u-\tilde u\|_{L^{\infty}(T_i-\theta,T_i;L^2\II )}\Big)^{{\frac{2k}{k+1}}} + \sum_{i=1}^2 \|  (u-\tilde u)(T_i)\|_{W^{1,2}\II} +\|  D-\tilde D\|_{L^2\II} \right)^{\frac{1}{2}}.
	\end{align*}
\end{theorem}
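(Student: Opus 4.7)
My plan is to lift the elliptic stability argument in Theorem \ref{thm:stability_sigma} to the parabolic setting, with the principal new ingredient being the treatment of the time-derivative mismatch that enters the effective source. First I would note that subtracting the parabolic equations evaluated at $t=T_1$ and $t=T_2$ shows that $\zeta := u(T_2)-u(T_1)$ satisfies the elliptic problem
\[
-\nabla\cdot(D^\dag\nabla\zeta)+\sigma^\dag\zeta = G, \qquad G := \bigl(f(T_2)-f(T_1)\bigr)-\bigl(\partial_t u(T_2)-\partial_t u(T_1)\bigr),
\]
with $\zeta|_{\partial\Omega}=0$, and analogously $\tilde\zeta := \tilde u(T_2)-\tilde u(T_1)$ solves the corresponding equation with $(\tilde D,\tilde\sigma,\tilde G)$. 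The standing parabolic regularity puts $\zeta,\tilde\zeta$ in $W^{1,\infty}(\Omega)$, mirroring the role of $\zeta$ in the elliptic proof.

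Next I would replay the elliptic derivation: inserting $\pm(\tilde\sigma\tilde\zeta,v)$ into $((\sigma^\dag-\tilde\sigma)\zeta,v)$ and using the two weak formulations produces the identity
\[
((\sigma^\dag-\tilde\sigma)\zeta,v) = (\tilde D\nabla\tilde\zeta - D^\dag\nabla\zeta,\nabla v) + (\tilde\sigma(\tilde\zeta-\zeta),v) + (G-\tilde G,v),
\]
after which I would pick the test function $v=(\sigma^\dag-\tilde\sigma)\zeta\in W^{1,2}_0(\Omega)$, whose $W^{1,2}(\Omega)$-norm is controlled by a constant thanks to the box constraints on $\sigma^\dag,\tilde\sigma$, a working bound on their $W^{1,2}$-seminorms, and the $W^{1,\infty}$-regularity of $\zeta$. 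The first two right-hand terms can then be bounded exactly as in the proof of Theorem \ref{thm:stability_sigma}: the splitting $\tilde D\nabla\tilde\zeta-D^\dag\nabla\zeta = \tilde D\nabla(\tilde\zeta-\zeta)+(\tilde D-D^\dag)\nabla\zeta$ delivers $c\bigl(\sum_i\|(u-\tilde u)(T_i)\|_{W^{1,2}(\Omega)}+\|D^\dag-\tilde D\|_{L^2(\Omega)}\bigr)$, while $|(\tilde\sigma(\tilde\zeta-\zeta),v)|$ contributes $c\sum_i\|(u-\tilde u)(T_i)\|_{L^2(\Omega)}^2$ together with a small fraction of $\|v\|_{L^2(\Omega)}^2$ to be absorbed on the left.

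The hard part will be the source mismatch $(G-\tilde G,v)$, since $G-\tilde G$ involves $\partial_t u(T_i)-\partial_t\tilde u(T_i)$ for $i=1,2$, and these time derivatives are not part of the data. Here I would recycle the backward difference quotient device already used in the proof of Corollary \ref{coro:para_F-tF}: for $\tau\in(0,\theta/k)$ introduce $\partial_\tau u(T_i)=\tau^{-1}\sum_{j=0}^k a_{j}u(T_i-j\tau)$ and the analogous $\partial_\tau\tilde u(T_i)$, combine Taylor expansion with the bound $\|\partial_t^{k+1}u\|_{L^\infty(T_i-\theta,T_i;L^2(\Omega))}\le c$ from Lemma \ref{lem:dtu} to obtain $\|\partial_t u(T_i)-\partial_\tau u(T_i)\|_{L^2(\Omega)}\le c\tau^k$, and trivially estimate $\|\partial_\tau(u-\tilde u)(T_i)\|_{L^2(\Omega)}\le c\tau^{-1}\|u-\tilde u\|_{L^\infty(T_i-\theta,T_i;L^2(\Omega))}$. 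Optimizing in $\tau$ then gives $\|G-\tilde G\|_{L^2(\Omega)}\le c\bigl(\sum_i\|u-\tilde u\|_{L^\infty(T_i-\theta,T_i;L^2(\Omega))}\bigr)^{k/(k+1)}$, which, after one more Young's inequality, produces the $(\,\cdot\,)^{2k/(k+1)}$ contribution to the target bound. Collecting the three estimates, absorbing $\|v\|_{L^2(\Omega)}^2$ on the left, and taking a square root yields the claimed inequality.
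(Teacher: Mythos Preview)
Your proposal is correct and follows essentially the same route as the paper's proof: derive the weak identity for $((\sigma-\tilde\sigma)\zeta,v)$, choose $v=(\sigma-\tilde\sigma)\zeta$, bound the diffusion and potential terms exactly as in Theorem~\ref{thm:stability_sigma}, and handle the time-derivative mismatch via the backward difference quotient argument of Corollary~\ref{coro:para_F-tF} followed by Young's inequality. The only cosmetic difference is that you package the time-derivative terms as a ``source mismatch'' $G-\tilde G$, whereas the paper writes them out directly as $\sum_{i}(\partial_t(\tilde u-u)(T_i),v)$; the substance is identical.
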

\begin{proof}
Letting $\zeta=u(T_2)-u(T_1)$ and choosing a test function $v\in H_0^1\II$, we have
\begin{align*}
	((\sigma-\tilde\sigma)\zeta,v)=(\tilde D\nabla \tilde \zeta-D\nabla \zeta,\nabla v)+(\tilde\sigma(\tilde \zeta-\zeta),v)+\sum_{i=1}^2(\partial_t(\tilde u- u)(T_1),v).
\end{align*}
Now we take $v=(\sigma-\tilde\sigma)\zeta$ and note that $\| \nabla v \|_{L^2\II} \le c$. Then we obtain
\begin{equation*}
	\begin{split}
	\|  v\|_{L^2\II}^2 &\le c\Big(\|   \zeta-\tilde \zeta\|_{W^{1,2}\II}+\|  D-\tilde D\|_{L^2\II}+\|  \tilde \zeta-\zeta\|_{L^2\II}\|  v\|_{L^2\II}
	+\sum_{i=1}^2\| \partial_t(\tilde u- u)(T_i)\|_{L^2\II} \|  v\|_{L^2\II} \Big).
	\end{split}
\end{equation*}
	Using the argument in the proof of Corollary \ref{coro:para_F-tF}, we use backward difference quotient to estimate $\| \partial_t(\tilde u- u)(T_i)\|_{L^2\II} $ and obtain
	\begin{align*}
		\sum_{i=1}^2\| \partial_t(\tilde u- u)(T_i)\|_{L^2\II}   \le \Big( \sum_{i=1}^2 \|  u-\tilde u\|_{L^{\infty}(T_i-\theta,T_i;L^2\II )}\Big)^{\frac{k}{k+1}}.
	\end{align*}
As a result, we conclude that
\begin{equation*}
\begin{split}
	\|  v\|_{L^2\II}^2 &\le c\Big(\sum_{i=1}^2\| (u - \tilde u)(T_i)\|_{W^{1,2}\II}+\|D-\tilde D\|_{L^2\II} +\Big( \sum_{i=1}^2 \|  u-\tilde u\|_{L^{\infty}(T_i-\theta,T_i;L^2\II )}\Big)^{\frac{2k}{k+1}}\Big).
\end{split}
\end{equation*}
	This completes the proof of the theorem.
\end{proof}

\begin{remark}\label{rem:sigma_internal_error}
	Similar as in Theorem \ref{thm:stability_sigma}, for any compact subset $\Omega'\Subset \Omega$ with $\mathrm{dist}(\overline{\Omega'},\partial\Omega)>0$, we can obtain the bound for $\|\sigma-\tilde{\sigma}\|_{L^2(\Omega')}$, if $\zeta=u(T_2)-u(T_1)\ge C>0$ in $\Omega'$.
	This condition can be achieved by following choice of data: we take $f_2-f_1\ge c_f>0$ in \eqref{eqn:f}, $T_1=T_0$, $T_2=3T_0$, with $T_0$ sufficiently large.
	Then $\zeta$ satisfies the elliptic equation with source $F=f_2-f_1+\partial_t u(T_1)-\partial_t u(T_2) $. According to Lemma \ref{lem:dtu}, there holds
	$\|\partial_t u(T_1)\|_{L^{\infty}\II} + \|\partial_t u(T_2)\|_{L^{\infty}\II}\le c T_0^{-1}$, we conclude that $F\ge c_f-cT_0^{-1}$ is strictly positive when $T_0$ is sufficiently large.
	Consequently, the strong maximun principle \cite[Theorem 1]{Vazquez:1984} implies $\zeta \ge c>0$ in $\Omega'$ and hence
	\begin{align*}
		\|  \sigma-\tilde\sigma\|_{L^2(\Omega')}
		\le c\Big( \big(\sum_{i=1}^2 \|  u-\tilde u\|_{L^{\infty}(T_i-\theta,T_i;L^2\II )}\big)^{\frac{2k}{k+1}} + \sum_{i=1}^2 \|  (u-\tilde u)(T_i)\|_{W^{1,2}\II} +\|  D-\tilde D\|_{L^2\II} \Big)^{\frac{1}{2}}.
	\end{align*}
\end{remark}

\subsection{Numerical scheme and error analysis}
In this part, we present the numerical scheme for the   reconstruction  of diffusion coefficient and potential coefficient.
First of all, we use the system \eqref{eq:parabolic_diffusion} to recover the diffusion coefficient without the knowledge of potential.
Denote the exact solution $u^\dag(x,t):=u(x,t;D^\dag,\sigma^\dag)$ and define
\begin{equation*}
	w^{\dag}=\frac{u(T_1)}{u(T_2)}-1  \quad \text{and}\quad w^{\delta}=\frac{z^{\delta}(T_1)}{z^{\delta}(T_2)}-1
\end{equation*}
Recall that the exact solution $u^\dag$ is strictly positive, cf. \eqref{eqn:pos-parab}.
Then for ease of simplicity, we assume that $z^{\delta}$ are strictly positive in $\overline\Omega$.
Moreover, we assume that
\begin{equation}\label{eqn:bound-z}
	\| z^\delta \|_{C((T_i-\theta_i,T]; L^\infty(\Omega))} \le c
\end{equation}
with some generic constant $c$.
Then it is easy to observe
\begin{equation*}
	\|w^{\delta}-w^{\dag}\|\le c\delta.
\end{equation*}
Moreover, we take
$$F^{\delta}=\left(f(T_2)-\partial_{\tau}z^{\delta}(T_2)\right)z^{\delta}(T_1)-\left(f(T_1)-\partial_{\tau} z^{\delta}(T_1)\right)z^{\delta}(T_2),$$
where $ \partial_{\tau}$ denote backward difference quotient of orde $k$ for some $0<\tau<\theta/k$.
We apply the following output least squares formulation
\begin{align}\label{eq:min_D_dis_para}
	\min_{q_h\in \mathcal{A}_{q,h}} J_{\alpha_1,h }(q_h)=\frac{1}{2}\|  w_h(q_h)-w^{\delta}\|_{L^2\II}^2+\frac{\alpha_1}{2}\|  \nabla q_h\|_{L^2\II}^2
\end{align}
where $w_h(q_h)\in V_h^0$ is a weak solution of
\begin{equation}\label{eq:min_D_dis_restriction_para}
	(q_h\nabla w_h,\nabla v_h)=(F^{\delta},v_h),\quad\forall v_h\in V_{h}^0.
\end{equation}
The following theorem is a direct consequence of Theorem \ref{thm:error_weighted_diffusion}.
\begin{theorem}\label{thm:error_weighted_diffusion_para}
	Suppose Assumption \ref{assume:coeff_positive_para} and \ref{ass:para-2} hold. Let $q^{\dag} = D^\dag |u^\dag(T_1)|^2\in \mathcal{A}_q$ be the exact parameter in
	the elliptic equation \eqref{eq:parabolic_diffusion}, $w(q^{\dag})$ be the exact solution, and $q_h^*\in \mathcal{A}_{q,h}$ be a minimizer of problem \eqref{eq:min_D_dis_para}-\eqref{eq:min_D_dis_restriction_para}.Then with $\eta=h^2+\delta+\tau^k + \delta\tau^{-1}+\alpha_1^{\frac{1}{2}}$ , there holds
	\begin{align*}
		\int_{\Omega}\left(\frac{q^{\dag}-q_h^*}{q^{\dag}}\right)^2\left( q^{\dag}|\nabla w(q^{\dag})|^2+Fw(q^{\dag})\right)\d x\le c \left( \left(h\eta\alpha_1^{-\frac{1}{2}}+\min(h+h^{-1}\eta,1)\right)\eta\alpha_1^{-\frac{1}{2}}+\delta+\tau^k+\delta\tau^{-1}\right)
	\end{align*}
	Moreover, if  the following positive condition \eqref{eq:positive_condition_para_D} holds for $\beta\ge 0$, we have
	\begin{align*}
		\| q^{\dag}-q_h^*\|_{L^2(\Omega)}\le c \left( \left(h\eta\alpha_1^{-\frac{1}{2}}+\min(h+h^{-1}\eta,1)\right)\eta\alpha_1^{-\frac{1}{2}}+
		\delta+\tau^k+\delta\tau^{-1}\right)^{\frac{1}{2(1+\beta)}}
	\end{align*}
\end{theorem}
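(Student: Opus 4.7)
The plan is to treat this result as a direct consequence of Theorem \ref{thm:error_weighted_diffusion} by recognizing that the reformulated system \eqref{eq:parabolic_diffusion} has exactly the same PDE structure as \eqref{eq:elliptic_diffusion_0_bdry}, namely a homogeneous Dirichlet problem for $w$ with diffusion coefficient $q = D^\dag |u^\dag(T_1)|^2$ and source $F$. The only structural difference is that both the observational data $w^\delta$ and the numerical source $F^\delta$ are now polluted not only by the measurement noise $\delta$ but also by the error of the backward difference quotient $\partial_\tau$ used to approximate $\partial_t u^\dag(T_i)$. Therefore the main work is to quantify $\|F - F^\delta\|_{L^2(\Omega)}$; once this is in hand, the entire machinery of Theorem \ref{thm:error_weighted_diffusion} can be reused verbatim.

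The first step is to show $\|w^\delta - w^\dag\|_{L^2(\Omega)} \le c\delta$, which follows by exactly the same manipulation as in Section \ref{subsec:recover_D}, using the strict positivity of both $u^\dag(T_i)$ (from \eqref{eqn:pos-parab}) and $z^\delta(T_i)$ together with the uniform bound \eqref{eqn:bound-z}. Next, for the source perturbation I would split
\begin{equation*}
F - F^\delta = \sum_{i \ne j}\Bigl[\bigl(f(T_i) - \partial_t u^\dag(T_i)\bigr)\bigl(u^\dag(T_j) - z^\delta(T_j)\bigr) + \bigl(\partial_\tau z^\delta(T_i) - \partial_t u^\dag(T_i)\bigr)\,u^\dag(T_j)\Bigr],
\end{equation*}
where $i, j \in \{1, 2\}$ with $i \ne j$. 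The pieces containing $u^\dag(T_j) - z^\delta(T_j)$ are bounded by $c\delta$ thanks to \eqref{eq:noise_level_para} and the $L^\infty$-bounds of $f$ and $\partial_t u^\dag(T_i)$ from Lemma \ref{lem:dtu}. For the remaining pieces I would further insert $\partial_\tau u^\dag(T_i)$ and use the Taylor estimate \eqref{eqn:err-quot} (giving the $\tau^k$ contribution) together with $\|\partial_\tau(z^\delta - u^\dag)(T_i)\|_{L^2(\Omega)} \le c\delta\tau^{-1}$, which is immediate from the definition of $\partial_\tau$ and \eqref{eq:noise_level_para}. Combining these yields the key bound
\begin{equation*}
\|F - F^\delta\|_{L^2(\Omega)} \le c\bigl(\delta + \tau^k + \delta\tau^{-1}\bigr).
\end{equation*}

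With this estimate in place, the analog of Lemma \ref{lem:ellip-est-1} immediately gives $\|w_h(q_h^*) - w^\delta\|_{L^2(\Omega)}^2 + \alpha_1 \|\nabla q_h^*\|_{L^2(\Omega)}^2 \le c(h^4 + \delta^2 + \tau^{2k} + \delta^2\tau^{-2} + \alpha_1)$, which is precisely $\eta^2$ with $\eta = h^2 + \delta + \tau^k + \delta\tau^{-1} + \alpha_1^{1/2}$. The remainder of the argument then reproduces the proof of Theorem \ref{thm:error_weighted_diffusion} line for line: choose the test function $\varphi = (q^\dag - q_h^*) w(q^\dag) / q^\dag$, use the $L^2$-projection $P_h$ and the approximation estimates \eqref{eq:error_P_h}, combine with the inverse inequality \eqref{eq:inverse_ineq} to bound $\|\nabla(w_h(q_h^*) - w(q^\dag))\|_{L^2(\Omega)}$ by $c\min(h + h^{-1}\eta, 1)$, and finally integrate by parts to produce the weighted quantity $\tfrac{1}{2}\int_\Omega (\tfrac{q^\dag - q_h^*}{q^\dag})^2(q^\dag|\nabla w(q^\dag)|^2 + F w(q^\dag))\,\d x$. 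The only mild obstacle is bookkeeping: the source error now enters as $\delta + \tau^k + \delta\tau^{-1}$ rather than $\delta$, which must be carried through consistently when bounding the term $(F - F^\delta, P_h\varphi)$. Under the positivity condition \eqref{eq:positive_condition_para_D}, the final $L^2$ estimate follows from the same two-region argument ($\Omega_\rho$ vs.\ $\Omega_\rho^c$) used at the end of the proof of Theorem \ref{thm:stab-Du2}.
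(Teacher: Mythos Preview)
Your proposal is correct and follows essentially the same approach as the paper: reduce to Theorem \ref{thm:error_weighted_diffusion} by bounding $\|F-F^\delta\|_{L^2(\Omega)}\le c(\delta+\tau^k+\delta\tau^{-1})$ via insertion of $\partial_\tau u^\dag(T_i)$ and the Taylor estimate \eqref{eqn:err-quot}, then rerun the weighted-energy argument with $\eta$ redefined accordingly. One small slip: in your displayed decomposition of $F-F^\delta$, the second bracket should multiply $z^\delta(T_j)$ (or equivalently you need one more intermediate term), not $u^\dag(T_j)$, and the sum over $i\ne j$ should carry alternating signs; this is harmless since both factors are uniformly bounded in $L^\infty$ by \eqref{eqn:bound-z} and Lemma \ref{lem:dtu}, so the estimate is unaffected.
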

\begin{proof}
	The proof closely resembles that of Theorem \ref{thm:error_weighted_diffusion}, with the primary distinction being the estimation of $F - F^{\delta}$. More specifically, we obtain
	\begin{align*}
		\|  F- F^{\delta}\|_{L^2\II}&
		=\|  f(T_2)\big(u(T_1)-z^{\delta}(T_1)\big)\|_{L^2\II}+\|  f(T_1)\big(u(T_2)-z^{\delta}(T_2)\big)\|_{L^2\II}\\
		&\quad + \|  \partial_t u(T_1) u(T_2)-\partial_{\tau} z^{\delta}(T_1) z^{\delta}(T_2) \|_{L^2\II}+\|  \partial_t u(T_2) u(T_1)-\partial_{\tau} z^{\delta}(T_2) z^{\delta}(T_1)\|_{L^2\II}\\
		&\le  c \delta + \|  \partial_t u(T_1) u(T_2)-\partial_{\tau} z^{\delta}(T_1) z^{\delta}(T_2) \|_{L^2\II}+\|  \partial_t u(T_2) u(T_1)-\partial_{\tau} z^{\delta}(T_2) z^{\delta}(T_1)\|_{L^2\II}.
	\end{align*}
	Then it suffices to bound the second term, and then the third term follows analogously.
	Inserting the terms $\partial_\tau u(T_1) u(T_2)$ and $\partial_\tau z^\delta(T_1) u(T_2)$, we have
	\begin{equation*}
		\begin{aligned}
			&\quad  \|  \partial_t u(T_1) u(T_2)-\partial_{\tau} z^{\delta}(T_1) z^{\delta}(T_2) \|_{L^2\II} \\
			&\le \|  \partial_t u(T_1) u(T_2)-\partial_\tau u(T_1) u(T_2) \|_{L^2\II}
			+  \| \partial_\tau u(T_1) u(T_2) - \partial_{\tau} u(T_1) z^\delta(T_2) \|_{L^2\II} \\
			& + \| \partial_{\tau} u(T_1) z^\delta(T_2) - \partial_{\tau} z^{\delta}(T_1) z^{\delta}(T_2)\|_{L^2\II} = \sum_{j=1}^3 \mathrm{I}_j.
		\end{aligned}
	\end{equation*}
	By Lemma \ref{lem:dtu}, we observe that $\|\partial_t^{k+1}u \|_{L^\infty(T_i-\theta,T_i;L^2\II)}  + \|   u(T_i) \|_{L^\infty\II} \le c$
	and hence we apply the estimate \eqref{eqn:err-quot} to obtain
	\begin{equation*}
		\mathrm{I}_1 \le  \|  \partial_t u(T_1) -\partial_\tau u(T_1)   \|_{L^2\II}   \|   u(T_2) \|_{L^\infty\II}
		\le c \tau^k \|\partial_t^{k+1}u \|_{L^\infty(T_1-\theta,T_1;L^2\II)} \|   u(T_2) \|_{L^\infty\II}
		\le c \tau^k.
	\end{equation*}
	For the second term, we apply Lemma \ref{lem:dtu} and the assumption \eqref{eq:noise_level_para} to obtain
	\begin{equation*}
		\begin{aligned}
			\mathrm{I}_2 &\le  \| \partial_\tau u(T_1) \|_{L^\infty\II}  \|  u(T_2)  - z^\delta \|_{L^2\II}
			\le c  \delta  \tau^{-1}   \int_{T_1 - \theta}^{T_1} \|u_t(t)\|_{L^\infty\II} \,\d t
			\le c \delta \tau^{-1}.
		\end{aligned}
	\end{equation*}
	Finally, for the term $\mathrm{I}_3$, we use the assumption \eqref{eq:noise_level_para} and \eqref{eqn:bound-z} to derive
	\begin{equation*}
		\begin{aligned}
			\mathrm{I}_3
			&\le  \| \partial_{\tau} u(T_1) z^\delta(T_2) - \partial_{\tau} z^{\delta}(T_1) z^{\delta}(T_2)\|_{L^2\II} \\
			&\le c \|  \partial_{\tau} (u(T_1) - z^\delta(T_1)) \|_{L^2\II} \| z^{\delta}(T_2) \|_{L^\infty\II} \le c \delta \tau^{-1}.
		\end{aligned}
	\end{equation*}
	Consequently, we arrive at
	\begin{equation*}
		\|  F- F^{\delta}\|_{L^2\II} \le c (\delta + \tau^k+\delta \tau^{-1}).
	\end{equation*}
	The proof that follows simply involves substituting $\| F- F^{\delta}\|_{L^2\II}$ in Theorem \ref{thm:error_weighted_diffusion} with the newly established error bound. As such, we omit this largely redundant proof.
\end{proof}

\begin{remark}\label{rem:error_diffusion_D_para}
	In Theorem \ref{thm:error_weighted_diffusion_para}, the choice $\tau\sim \delta^{\frac{1}{k+1}}$ implies that $\|  F- F^{\delta}\|_{L^2\II} \le c \delta^{\frac{k}{k+1}}$.
	With \textsl{a priori} choice of the algorithmic parameters: $h \sim  \delta^{\frac{k}{2(k+1)}}$, 	$\alpha_1 \sim \delta^{\frac{2k}{k+1}}$. Under the positivity condition \eqref{eq:positive_condition_para_D} with $\beta\ge0$, there holds the estimate
	\begin{align*}
		\|  D^{\dag}-D_h^*\|_{L^2\II}\le c\, \delta^{\frac{k}{4(1+\beta)(k+1)}}.
	\end{align*}
	When $f$ is smooth over time, the optimal order is nearly $(4(1+\beta))^{-1}$. This estimation aligns with the one given in the elliptic case, as detailed in Remark \ref{rem:error_diffusion_D}.
	\end{remark}\vskip5pt

Now we turn to the reconstruction formula of potential $\sigma^\dag$ and study the approximation error. Let $\zeta=u(T_1)-u(T_2)$ which is a solution to the  elliptic problem
\begin{equation}\label{eq:para_potential_0_bdry}
	\left\{
	\begin{aligned}
		-\nabla \cdot \left(D \nabla \zeta\right)+\sigma \zeta&=f(T_1)-f(T_2)+\partial_t u(T_2)-\partial_t u(T_1),\quad \text{in }\Omega,\\
		\zeta&=0,\quad \text{on }\partial\Omega.
	\end{aligned}
	\right.
\end{equation}
The noisy observational data for equation \eqref{eq:para_potential_0_bdry} is
$\zeta^{\delta}=z^{\delta}(T_2)-z^{\delta}(T_1)$ satisfying $\|\zeta-\zeta^{\delta}\|_{L^2(\Omega)}\le c\delta$. We consider following least-squares formulation
\begin{align}\label{eq:min_sigma_dis_para}
	\min_{\sigma_H\in \mathcal{A}_{\sigma,H}} \mathcal{J}_{\alpha_2,H }(\sigma_H)=\frac{1}{2}\|  \zeta_H(\sigma_H)-\zeta^{\delta}\|_{L^2\II}^2+\frac{\alpha_2}{2}\|  \nabla \sigma_H\|_{L^2\II}^2
\end{align}
where $\mathcal{A}_{\sigma,H}=\mathcal{A}_{\sigma}\cap V_H$ and $\zeta_H(\sigma_H)\in V_H^0$ is the solution to the finite dimensional problem
\begin{equation}\label{eq:min_sigma_dis_restriction_para}
	(D_h^*\nabla \zeta_H,\nabla v_H)+(\sigma_H \zeta_H,v_H)=(f(T_1)-f(T_2)+\partial_{\tau} z^{\delta}(T_2)-\partial_{\tau} z^{\delta}(T_1),v_H),\quad\forall v_H\in V_{H}^0,
\end{equation}
where  $ \partial_{\tau}$ denote backward difference quotient of orde $k$ for some $0<\tau<\theta/k$.
Similar as Section \ref{sec:ellip-fem}, we use $H$ to denote the different spatial mesh size. Here  $\partial_{\tau}$ denotes the difference quotient as the discretized scheme \eqref{eq:min_D_dis_restriction_para} and $D_h^*$ is the diffusion coefficient we reconstructed in previous step with \textit{a priori} estimate
\begin{equation*}
	\|D_h^*-D^{\dag}\|_{L^2\II}\le c\delta^{\gamma}=:\epsilon \qquad  \text{with} ~~\gamma=\frac{ k}{4(1+\beta)(k+1)}.
\end{equation*}

The subsequent result offers an error estimation for $\sigma_H - \sigma^\dag$. Given that the proof parallels that of Theorem \ref{thm:error_weighted_potential}, we have decided not to reproduce it here.
\begin{theorem}\label{thm:error_weighted_potential_para}
	Suppose Assumption \ref{assume:coeff_positive_para} and \ref{assume:potential_regularity} holds, $D^{\dag}\in \mathcal{A}_D\cap W^{1,\infty}\II$  and $\|  D_h^*-D^{\dag}\|_{L^2\II}\le\epsilon$. Let $\zeta(\sigma^{\dag})$ be the solution to equation \eqref{eq:para_potential_0_bdry}, while $\sigma_H^*$ be the minimizer of \eqref{eq:min_sigma_dis_para}-\eqref{eq:min_sigma_dis_restriction_para}. Then with $\eta=H^2+\epsilon+(\tau^k + \delta\tau^{-1})^2+\sqrt{\alpha_2}$, there holds
	\begin{align*}
		\|  (\sigma^{\dag}-\sigma_H^*)\zeta(\sigma^{\dag})\|_{L^2\II}\le c \left(H\alpha_2^{-\frac{1}{2}}\eta+\eta+\tau^k +\delta\tau^{-1}+\left(\alpha_2^{-\frac{1}{2}}\eta(\min\{H+H^{-1}\eta,1\}+\epsilon)\right)^{\frac{1}{2}} \right).
	\end{align*}
Moreover, if $f_2-f_1\ge c>0$ a.e. in $\Omega$, $T_1 = T_0$ and $T_2 = 3T_0$ with $T_0$ being sufficiently large,
	then for any $\Omega' \Subset \Omega$, there exists a constant $c$ depending on $\mathrm{dist}(\Omega',\partial\Omega)$, $f$, $g$, $u_0$, $D^{\dag}$ and $\sigma^{\dag}$, such that
	\begin{align*}
		\|  (\sigma^{\dag}-\sigma_H^* \|_{L^2(\Omega')}\le c\left(H\alpha_2^{-\frac{1}{2}}\eta+\eta+\tau^k +\delta\tau^{-1}+\left(\alpha_2^{-\frac{1}{2}}\eta(\min\{H+H^{-1}\eta,1\}+\epsilon)\right)^{\frac{1}{2}} \right).
	\end{align*}
\end{theorem}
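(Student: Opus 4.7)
The plan is to mirror the elliptic proof of Theorem \ref{thm:error_weighted_potential} essentially verbatim, with the only substantive modification being the careful tracking of the additional error introduced by replacing $\partial_t u(T_2) - \partial_t u(T_1)$ by its noisy, discretely-differentiated surrogate $\partial_\tau z^\delta(T_2) - \partial_\tau z^\delta(T_1)$. Denote the exact right-hand side of \eqref{eq:para_potential_0_bdry} by $G := f(T_1)-f(T_2)+\partial_t u(T_2)-\partial_t u(T_1)$ and the discrete noisy surrogate by $G^\delta := f(T_1)-f(T_2)+\partial_\tau z^\delta(T_2)-\partial_\tau z^\delta(T_1)$. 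Using Lemma \ref{lem:dtu} (which gives $\|\partial_t^{k+1} u\|_{L^\infty(T_i-\theta,T_i; L^2\II)} \le c$ since $T_1,T_2$ are away from the switching times of $f$), the estimate \eqref{eqn:err-quot} applied to both $T_1$ and $T_2$, and the noise bound \eqref{eq:noise_level_para}, one obtains the key discrete-source error bound
\begin{equation*}
\| G - G^\delta \|_{L^2\II} \le c(\tau^k + \delta \tau^{-1}).
\end{equation*}

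Next I would establish the two preparatory lemmas analogous to Lemmas \ref{lem:zhIhs-zs} and \ref{lem:zhsh*-zs} in the parabolic setting. The first one compares $\zeta_H(\mathcal I_H \sigma^\dag)$ (with discrete source $G^\delta$ and diffusion $D_h^*$) against the exact $\zeta(\sigma^\dag)$ (with exact source $G$ and diffusion $D^\dag$). A three-term splitting identical to the elliptic case, combined with the source error bound above and $\|D_h^*-D^\dag\|_{L^2\II}\le \epsilon$, yields
\begin{equation*}
\| \zeta_H(\mathcal I_H \sigma^\dag) - \zeta(\sigma^\dag) \|_{L^2\II} \le c(H^2 + \epsilon + \tau^k + \delta\tau^{-1}).
\end{equation*}
The minimization inequality $\mathcal J_{\alpha_2,H}(\sigma_H^*) \le \mathcal J_{\alpha_2,H}(\mathcal I_H \sigma^\dag)$, together with $\|\zeta(\sigma^\dag)-\zeta^\delta\|_{L^2\II}\le c\delta$, then delivers the parabolic analogue of Lemma \ref{lem:zhsh*-zs}:
\begin{equation*}
\| \zeta(\sigma^\dag) - \zeta_H(\sigma_H^*)\|_{L^2\II} + \alpha_2^{1/2}\|\nabla \sigma_H^*\|_{L^2\II} \le c(H^2+\epsilon+\delta+\tau^k+\delta\tau^{-1}+\alpha_2^{1/2}) \le c\eta.
\end{equation*}

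For the main energy argument, I would take an arbitrary $\varphi \in W^{1,2}_0\II$, insert $P_H \varphi$, and use the weak formulations of $\zeta(\sigma^\dag)$ and $\zeta_H(\sigma_H^*)$ to obtain the identity
\begin{equation*}
((\sigma^\dag - \sigma_H^*)\zeta(\sigma^\dag), \varphi) = \mathrm I_1 + \mathrm I_2 + \mathrm I_3 + \mathrm I_4,
\end{equation*}
where $\mathrm I_1,\mathrm I_2,\mathrm I_3$ are the three terms appearing in the proof of Theorem \ref{thm:error_weighted_potential}, and $\mathrm I_4 := (G - G^\delta, P_H\varphi)$ is the new term arising from the difference between the continuous and discrete-noisy sources. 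Choosing $\varphi = (\sigma^\dag - \sigma_H^*)\zeta(\sigma^\dag)$ and using $W^{1,2}_0\II$-regularity of $\zeta(\sigma^\dag)$ together with the box constraint on $\A_\sigma$ give $\|\varphi\|_{L^2\II} \le c$ and $\|\nabla\varphi\|_{L^2\II}\le c(1+\|\nabla\sigma_H^*\|_{L^2\II})\le c(1+\alpha_2^{-1/2}\eta)$. The terms $\mathrm I_1, \mathrm I_2, \mathrm I_3$ are then bounded exactly as in the elliptic proof, producing the bound $cH^2\alpha_2^{-1}\eta^2 + c\eta^2 + c\alpha_2^{-1/2}\eta(\min\{H+H^{-1}\eta,1\}+\epsilon)$. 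For the new term, stability of $P_H$ and the source error bound yield
\begin{equation*}
|\mathrm I_4| \le \|G-G^\delta\|_{L^2\II}\|P_H\varphi\|_{L^2\II} \le c(\tau^k+\delta\tau^{-1})\|\varphi\|_{L^2\II} \le c(\tau^k+\delta\tau^{-1})^2 + \tfrac{1}{4}\|\varphi\|_{L^2\II}^2,
\end{equation*}
which is already absorbed into $\eta^2$ (note the $(\tau^k+\delta\tau^{-1})^2$ contribution in the definition of $\eta$). Combining all four bounds and absorbing fractions of $\|\varphi\|_{L^2\II}^2$ produces the first stated inequality. The second assertion follows exactly as in the proof of Theorem \ref{thm:stability_sigma}: under the stated choice $T_1=T_0$, $T_2=3T_0$ with $T_0$ large and $f_2-f_1\ge c > 0$, Remark \ref{rem:sigma_internal_error} shows $\zeta(\sigma^\dag)\ge c>0$ on any $\Omega' \Subset \Omega$ via the strong maximum principle, which converts the weighted $L^2$ bound into an $L^2(\Omega')$ bound on $\sigma^\dag-\sigma_H^*$.

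The main obstacle, and essentially the only point where the parabolic proof departs nontrivially from the elliptic one, is the clean estimation of $\|G-G^\delta\|_{L^2\II}$. This requires the Taylor-expansion bound \eqref{eqn:err-quot} for the backward difference quotient of order $k$, which in turn relies on Lemma \ref{lem:dtu} to bound $\|\partial_t^{k+1}u\|_{L^\infty(T_i-\theta,T_i;L^2\II)}$; the restriction on $T_1,T_2$ avoiding the switching times of $f$ in Assumption \ref{assume:coeff_positive_para}(ii) is exactly what makes this high-order time regularity available. Once this term is controlled, everything else proceeds by a bookkeeping repetition of the elliptic argument with $\eta$ redefined to absorb the new error $(\tau^k+\delta\tau^{-1})^2$.
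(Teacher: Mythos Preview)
Your proposal is correct and follows exactly the approach the paper intends: the paper explicitly omits the proof, stating only that it ``parallels that of Theorem \ref{thm:error_weighted_potential}'', and you have correctly carried out that parallel by tracking the single new ingredient, namely the source perturbation $\|G-G^\delta\|_{L^2\II}\le c(\tau^k+\delta\tau^{-1})$ coming from the backward difference quotient and noisy data. One minor bookkeeping point: your claim that the intermediate quantity $H^2+\epsilon+\delta+\tau^k+\delta\tau^{-1}+\alpha_2^{1/2}$ is $\le c\eta$ is not literally true with the stated $\eta=H^2+\epsilon+(\tau^k+\delta\tau^{-1})^2+\alpha_2^{1/2}$ (since $\tau^k+\delta\tau^{-1}$ need not be bounded by its square when small); rather it is $\le c(\eta+\tau^k+\delta\tau^{-1})$, and this extra term is precisely the explicit $\tau^k+\delta\tau^{-1}$ appearing in the final estimate, so the conclusion is unaffected.
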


\begin{remark}\label{rem:pot-err}
	According to Theorem \ref{thm:error_weighted_diffusion} and Remark \ref{rem:error_diffusion_D}, we have
	$$ \| D_h^*  - D^\dag   \|  \le \epsilon =  c\, \delta^{\frac{k}{4(1+\beta)(k+1)}},$$
	provided that  $h \sim  \delta^{\frac{k}{2(k+1)}}$, $\alpha_1 \sim \delta^{\frac{2k}{k+1}}$ and the positivity condition \eqref{eq:positive_condition} is valid with $\beta\in[0,2]$.
	As a result, with the choice of parameters $H\sim \epsilon^{\frac{1}{2}},\alpha_2\sim \epsilon^2,\tau\sim\delta^{\frac{1}{k+1}}$, there holds the estimate
	\begin{equation*}
		\| (\sigma^{\dag}-\sigma_H^*)\zeta (\sigma^{\dag})\|_{L^2\II}\le c \,\delta^{\frac{k}{8(1+\beta)(k+1)}},
	\end{equation*}
	Finally, if $f_2-f_1\ge c>0$ a.e. in $\Omega$, $T_1 = T_0$ and $T_2 = 3T_0$ with $T_0$ being sufficiently large,
	then
	\begin{equation*}
		\| \sigma^{\dag}-\sigma_H^*\|_{L^2(\Omega')}\le c \,\delta^{\frac{k}{8(1+\beta)(k+1)}}.
	\end{equation*}
	where $\Omega' \Subset \Omega$ and  the constant $c$ depending on $\mathrm{dist}(\Omega',\partial\Omega)$, $f$, $g$, $u_0$, $D^{\dag}$ and $\sigma^{\dag}$.
\end{remark}

\section{Numerical results}\label{sec:numer}
In this section, we start with a concise introduction of our approach to addressing the optimization problems – \eqref{eq:min_D_dis}-\eqref{eq:min_D_dis_restriction}, \eqref{eq:min_D_dis_para}-\eqref{eq:min_D_dis_restriction_para}, \eqref{eq:min_sigma_dis}-\eqref{eq:min_sigma_dis_restriction}, and \eqref{eq:min_sigma_dis_para}-\eqref{eq:min_sigma_dis_restriction_para}. Following this, we present empirical results that demonstrate the precision of our proposed decoupled numerical algorithm.

\subsection{Numerical implementation}
In this part, we introduce the numerical implementation for the reconstruction of both the diffusion coefficient $D^\dag$ and potential $\sigma^\dag$.
 Our approach, inspired by our theoretical analysis, involves a sequential reconstruction of these two parameters. In particular, we initiate with the reconstruction of $D^\dag$, followed by the subsequent reconstruction of $\sigma^\dag$.
The noisy data, denoted as $z_i^{\delta}$ for $i=1,2$ in the elliptic problem, and $z^{\delta}$ for the parabolic problem, are respectively generated as follows:
\begin{align*}
	z_i^{\delta}(x)=u_i^{\dag}(x)+\delta\sup_{x\in\Omega}|u_i^{\dag}(x)|\xi(x),\quad\text{and}\quad z^{\delta}(x,t)=u^{\dag}(x,t)+\delta\sup_{x\in\Omega}|u^{\dag}(x,t)|\xi(x,t).
\end{align*}
Here, $\xi$ adheres to the standard Gaussian distribution, while $\delta$ denotes the level of noise. The solution $u^{\dag}$ corresponds to the precise values of $D^\dag$ and $\sigma^\dag$, calculated using a highly refined mesh. The conjugate gradient method is utilized to address the discrete optimization problem \cite[Section 5]{Nocedal:1999}. The derivative of the least-squares functional is computed and presented in the following lemma.

\begin{lemma}\label{lem:functional_derivative}
Consider the objective functional $J_{\alpha_1}$, which is defined in \eqref{eq:min_D_dis}-\eqref{eq:min_D_dis_restriction} or   \eqref{eq:min_D_dis_para}-\eqref{eq:min_D_dis_restriction_para}.
The derivative of $J_{\alpha_1}$ with respect to $q$, denoted as $J_{\alpha_1}'(q)$, for $q\in W^{1,2}\II$, is expressed as:
\begin{equation*}
J_{\alpha_1}'(q)=\nabla w\cdot\nabla v-\alpha_1\Delta q.
\end{equation*}
In the above equation, $v$ denotes a function which is the solution to the elliptic problem:
\begin{equation*}
\left\{ \begin{aligned}
-\nabla\cdot(q\nabla v) &= w^{\delta}-w, &&\mbox{\rm in } \Omega, \\
v&=0, &&\mbox{\rm on } \partial\Omega,
\end{aligned}
\right.
\end{equation*}
Similarly, consider another functional $J_{\alpha_2}$, defined in \eqref{eq:min_sigma_dis}-\eqref{eq:min_sigma_dis_restriction} or  \eqref{eq:min_sigma_dis_para}-\eqref{eq:min_sigma_dis_restriction_para}.
The derivative of $J_{\alpha_2}$ with respect to $\sigma$, denoted as $J_{\alpha_2}'(\sigma)$, for $\sigma\in W^{1,2}\II$, is given by:
\begin{equation*}
			 \quad J_{\alpha_2}'(\sigma)= \zeta \tilde{v}-\alpha_2\Delta \sigma,
\end{equation*}
where the function $\tilde{v}$ solves the elliptic problem
\begin{equation*}
\left\{\begin{aligned}
-\nabla\cdot(D_h^*\nabla \tilde{v})+\sigma\tilde{v} &= \zeta^{\delta}-\zeta, &&\mbox{\rm in } \Omega, \\
\tilde{v}&=0, &&\mbox{\rm on } \partial\Omega.
\end{aligned}\right.
\end{equation*}
\end{lemma}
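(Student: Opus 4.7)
The plan is to compute the Gateaux (formal $L^2$) derivative of each least-squares functional at a generic admissible coefficient, using the classical adjoint-state method that reduces the need to evaluate $w'(q)[\delta q]$ explicitly. I would begin with $J_{\alpha_1}$, splitting it as the mismatch term $\tfrac12\|w(q)-w^\delta\|_{L^2}^2$ plus the regularization $\tfrac{\alpha_1}{2}\|\nabla q\|_{L^2}^2$. Formal differentiation of the regularization in a direction $\delta q$ gives $\alpha_1(\nabla q,\nabla \delta q)$, which by Green's identity (assuming the natural Neumann condition on $q$, consistent with the $H^1$-seminorm penalty) equals $-\alpha_1(\Delta q,\delta q)$. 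This identifies the regularization contribution to the $L^2$-gradient as $-\alpha_1\Delta q$.

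For the data-misfit term, I would differentiate the state equation $-\nabla\cdot(q\nabla w)=F^\delta$ with respect to $q$ in the direction $\delta q$. This yields the sensitivity $\delta w := w'(q)[\delta q] \in W_0^{1,2}(\Omega)$ satisfying
\begin{equation*}
-\nabla\cdot(q\nabla \delta w) = \nabla\cdot(\delta q\, \nabla w) \quad\text{in }\Omega,\qquad \delta w = 0 \quad\text{on }\partial\Omega.
\end{equation*}
Differentiability of the solution map $q\mapsto w(q)$ on $\A_q$ is standard under the box constraint and follows from the Lax--Milgram lemma. The chain rule then gives the directional derivative of the misfit as $(w-w^\delta,\delta w)$. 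To convert this into an explicit expression avoiding $\delta w$, I would introduce the adjoint state $v \in W_0^{1,2}(\Omega)$ defined by $-\nabla\cdot(q\nabla v) = w^\delta - w$ in $\Omega$ with $v=0$ on $\partial\Omega$. Testing the adjoint equation with $\delta w$ and the sensitivity equation with $v$, and subtracting, I obtain
\begin{equation*}
(w-w^\delta,\delta w) = -(q\nabla v,\nabla \delta w) = (\delta q\,\nabla w,\nabla v),
\end{equation*}
which identifies the $L^2$-gradient of the misfit at $q$ as $\nabla w\cdot \nabla v$. Combining the two contributions yields the formula $J_{\alpha_1}'(q) = \nabla w\cdot\nabla v - \alpha_1\Delta q$.

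The argument for $J_{\alpha_2}$ at a potential $\sigma$ is entirely analogous: the linearized state satisfies $-\nabla\cdot(D_h^*\nabla \delta\zeta)+\sigma\,\delta\zeta = -\delta\sigma\,\zeta$ with zero Dirichlet data, the adjoint $\tilde v$ is defined by $-\nabla\cdot(D_h^*\nabla\tilde v)+\sigma\tilde v = \zeta^\delta - \zeta$ in $\Omega$ with $\tilde v = 0$ on $\partial\Omega$, and testing each equation with the other's state produces $(\zeta-\zeta^\delta,\delta\zeta) = (\delta\sigma\,\zeta,\tilde v)$. The regularization term is handled exactly as before. Assembling the pieces gives $J_{\alpha_2}'(\sigma) = \zeta\tilde v - \alpha_2\Delta\sigma$.

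The calculation itself is routine; the only delicate points are justifying the Gateaux differentiability of the solution maps and making precise the sense in which $-\alpha_j\Delta$ represents the $H^1$-seminorm gradient (effectively treating Neumann boundary terms as vanishing, which is consistent with the unconstrained penalty and with how the formula is used inside the conjugate gradient iteration). I expect these regularity/boundary bookkeeping issues, rather than the algebra, to be the main obstacle to a fully rigorous statement; everything else is a direct application of the Lax--Milgram lemma and integration by parts.
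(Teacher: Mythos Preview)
Your proposal is correct and follows essentially the same adjoint-state argument as the paper: differentiate the state equation to obtain the sensitivity $\delta w$, introduce the adjoint $v$ (resp.\ $\tilde v$) solving the same operator with right-hand side $w^\delta-w$ (resp.\ $\zeta^\delta-\zeta$), and test each equation against the other's solution to eliminate the sensitivity. The only cosmetic difference is that the paper leaves the regularization contribution as $\alpha_1(\nabla q,\nabla p)$ in weak form rather than integrating by parts to $-\alpha_1\Delta q$, and handles the Riesz representation separately afterward.
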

\begin{proof}
To begin with, we calculate the derivative of $J_{\alpha_1}$ at $q$ in the direction of $p$, which is represented as $J'_{\alpha_1}(q)$.
A direct computation yields
\begin{equation*}
J'_{\alpha_1}(q)[p] = (w(q)-w^{\delta},w'(q)[p]) + \alpha_1 (\nabla q,\nabla p),
\end{equation*}
where $w'(q)[p] $ is the derivative of $w(q)$ at $q$ along the direction $p$. Note that $w'(q)[p]$ satisfies
\begin{align}\label{eqn:weak u_prime_q_p}
	(q\nabla w'(q)[p], \nabla\varphi)=-(p\nabla w(q), \nabla\varphi),\quad \forall \varphi \in { W_0^{1,2}(\Omega)}.
\end{align}
Meanwhile, the weak formulation of $v$ is given by
\begin{align}\label{eqn:weak_adjoint}
	(q\nabla v, \nabla\varphi) = (w^{\delta}-w(q),\varphi),\quad \forall \varphi\in { W_0^{1,2}(\Omega)}.
\end{align}
Now we choose $\varphi =v$ in \eqref{eqn:weak u_prime_q_p} and $\varphi=w'(q)[p]$ in \eqref{eqn:weak_adjoint}, and subtract the resulting identities. We obtain
$(w(q)-w^{\delta}), w'(q)[p]) =(p\nabla w,\nabla v)$. This shows the first assertion.

Moreover, the derivative  $J'_{\alpha_2}(\sigma)$ could be computed similarly.
Specifically, the derivative $J'_{\alpha_2}(\sigma)$ of $J_{\alpha_2}$ at $\sigma$ along the direction $p$ is given by
	\begin{equation*}
		J'_{\alpha_2}(\sigma)[p] = (\zeta(\sigma)-\zeta^{\delta},\zeta'(\sigma)[p]) + \alpha_2 (\nabla \sigma,\nabla p).
	\end{equation*}
	The derivative $\zeta'(\sigma)[p]$ of $\zeta(\sigma)$ at $\sigma$ along the direction $p$ satisfies
	\begin{align*}
		(D_h^*\nabla \zeta'(\sigma)[p], \nabla\varphi)+(\sigma\zeta'(\sigma)[p],\varphi)=-(p \zeta(\sigma), \varphi),\quad \forall \varphi \in { W_0^{1,2}(\Omega)}.
	\end{align*}
	Meanwhile, the weak formulation of $\tilde{v}$ is given by
	\begin{align*}
		(D_h^*\nabla \tilde{v}, \nabla\varphi)+(\sigma\tilde{v},\varphi) = (\zeta^{\delta}-\zeta(\sigma),\varphi),\quad \forall \varphi\in { W_0^{1,2}(\Omega)}.
	\end{align*}
	Choosing $\varphi =\tilde{v}$ in the former equation and $\varphi=\zeta'(\sigma)[p]$ in the later equation and subtracting, we obtain  $(\zeta(\sigma)-\zeta^{\delta}), \zeta'(\sigma)[p]) =(p\zeta,\tilde{v})$. The proof is completed.
\end{proof}

Lemma \ref{lem:functional_derivative} indicates that
the derivatives $J_{\alpha_1}'(q)$ and $J_{\alpha_2}'(\sigma)$ belongs to the dual space $(W^{1,2}(\Omega))'$ of $W^{1,2}(\Omega)$,
which is unsuitable for updating the coefficient $q$ and $\sigma$ directly. Therefore, we apply the Riesz map to pull them back to the space $W^{1,2}(\Omega)$ and obtain a sufficiently regular direction for updating. By Riesz representation theorem, there exists a unique function $G\in W^{1,2}(\Omega)$ such that
\begin{equation*}
	\langle J_{\alpha_1}'(q),\varphi\rangle_{(W^{1,2}(\Omega))',W^{1,2}(\Omega)}=(G,\varphi)_{W^{1,2}(\Omega)},\quad \forall \varphi\in { W^{1,2}(\Omega)},
\end{equation*}
where $	\langle \cdot,\cdot\rangle_{(W^{1,2}(\Omega))',W^{1,2}(\Omega)}$  denotes the duality pairing between $(W^{1,2}(\Omega))'$ and $W^{1,2}(\Omega)$, and $ (\cdot,\cdot)_{W^{1,2}(\Omega)}$ the $W^{1,2}(\Omega)$ inner product. Thus, $G\in W^{1,2}(\Omega)$ is the weak solution of the following elliptic problem:
\begin{equation*}
	\left\{
	\begin{aligned}
		-\Delta G+ G&= J_{\alpha_1}'(q),\quad \mbox{in }\Omega,\\
		\partial_{\nu} G&=0, \quad \mbox{on }\partial\Omega.
	\end{aligned}
	\right.
\end{equation*}
Then the function $G$ serves as the gradient of the functional $J_{\alpha_1}$ at $q$, and is used in practical computation. The gradient of the functional  $J_{\alpha_2}$ at $\sigma$ can be computed in the same formulation.

\subsection{Numerical experiments}
In this part, we present numerical results for reconstructing diffusion coefficient $D$ and potential $\sigma$.
 The reconstruction accuracy is measured in relatively $L^2(\Omega)$ error:
\begin{align*}
	e_D=\|  D_h^*-D^{\dag}\|_{L^2\II}/\|D^{\dag}\|_{L^2(\Omega)}\quad\text{and}\quad e_{\sigma}=\|  \sigma_h^*-\sigma^{\dag}\|_{L^2\II}/\|\sigma^{\dag}\|_{L^2(\Omega)}.
\end{align*}

To begin with, we present numerical results for one- and two-dimensional elliptic equations.
\vskip5pt

\begin{example}[1-D elliptic equation]\label{ex:ellptic_1D_smooth}
$\Omega=(0,1)$, $D^{\dag}(x)=2+\sin(2\pi x)$, $\sigma^{\dag}(x)=1+x(1-x)$.
The boundary is $g\equiv 1$ and the two sources are given by $f_1\equiv 1$ and $f_2\equiv 10$.
The exact solution $u_i^{\dag}$ are computed with mesh size $h=1/1600$.
\end{example}\vskip5pt
The results of the reconstruction at different noise levels can be seen in Figure \ref{Fig:elliptic_1D_sm},
while the relative errors are displayed in Table \ref{tab:ex_elliptic}.
For different noise level $\delta$, we adopt the regularization parameter $\alpha_1$ and mesh size $h$ as $\alpha_1=C_{\alpha_1}\delta^2$ and $h=C_h\delta^{\frac{1}{2}}$ respectively. This choice is guided by the recommendations made in Remark \ref{rem:error_diffusion_D} for the reconstruction of $D^\dag$. Next, in the process of reconstructing $\sigma^\dag$, we follow the guidelines provided in Remark \ref{rem:error_potential_sigma}. Specifically, we assign values to $\alpha_2$ and $H$ as $\alpha_2= C_{\alpha_2}\delta^{2\gamma}$ and $H= C_H\delta^{\frac{\gamma}{2}}$ respectively. Here, $\gamma$ represents the empirical convergence rates observed in our experiments.
The constant $C_{\alpha_1}$, $C_h$, $ C_{\alpha_2}$ and $ C_H$  are determined by a trial and error way.
For reconstruction of $D^\dag$, we initially take $\alpha_1=10^{-6}$ and $h=1/16$. The numerical results indicate that the error $e_D$ decays to zero as the noise level tends to zero, with rate $O(\delta^{0.52})$. For reconstruction of potential $\sigma^\dag$, we initially take $\alpha_2=10^{-5}$ and $H=1/16$ and observe a convergence rate $O(\delta^{0.18})$.
It's important to note, as discussed in Remark 3.1, that the predicted rate for $D^\dag$ is $O(\delta^{1/4})$, which is significantly lower than the empirically observed rate. Moreover, with the empirical rate $\gamma = 0.52$, the predicted rate for $\sigma^\dag$ is expected to be $O(\delta^{0.26})$ as noted in Remark 3.2.
However, this rate is seldom observed in practical applications.
\begin{figure}[htbp]
	\centering
	\begin{tabular}{ccc}
		\includegraphics[width=0.27\textwidth]{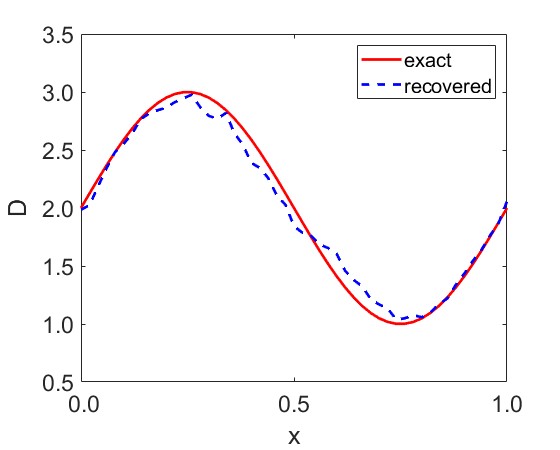}&
		\includegraphics[width=0.27\textwidth]{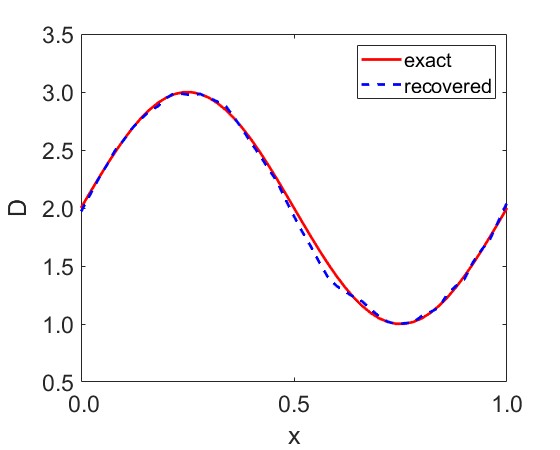}&
		\includegraphics[width=0.27\textwidth]{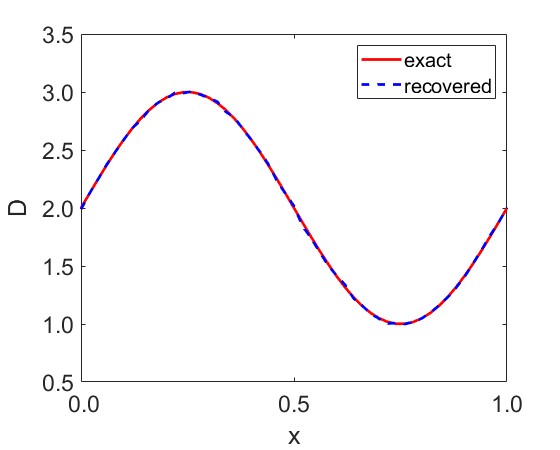}\\
		\includegraphics[width=0.27\textwidth]{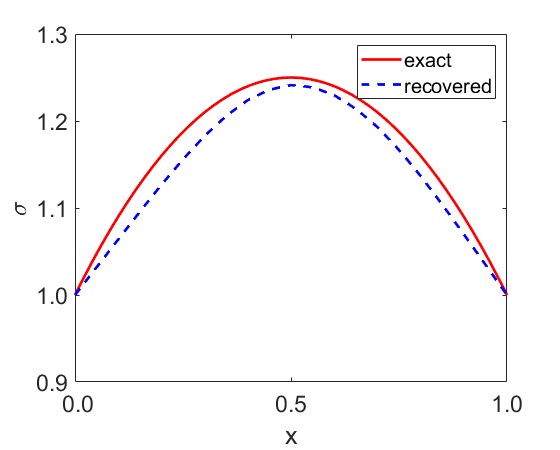}&
		\includegraphics[width=0.27\textwidth]{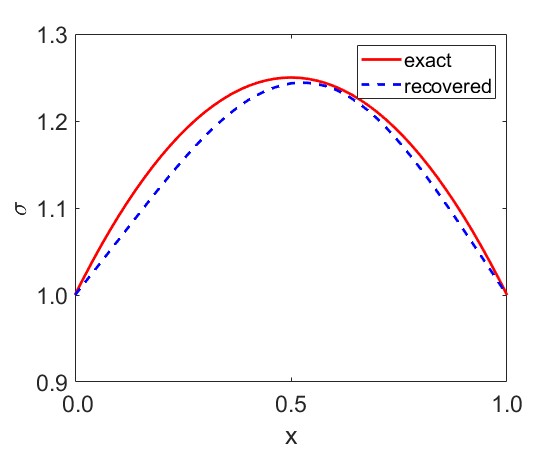}&
		\includegraphics[width=0.27\textwidth]{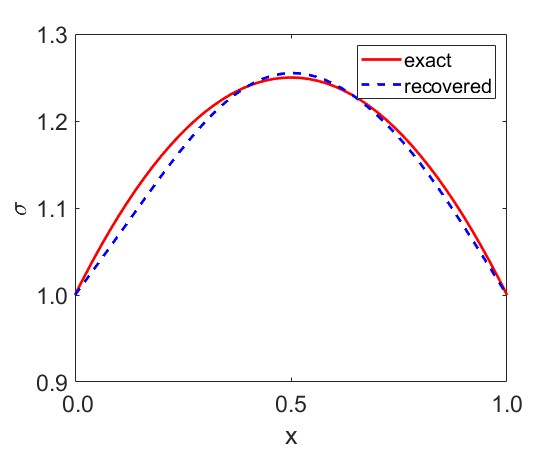}\\
		(a) $\delta=10^{-2}$ & (b) $\delta=10^{-3}$ & (c) $\delta=10^{-4}$
	\end{tabular}
	\caption{Example \ref{ex:ellptic_1D_smooth}. 
	First row: reconstructions of $D^\dag$. Second row: reconstructions of $\sigma^\dag$.}
	\label{Fig:elliptic_1D_sm}
\end{figure}

 \begin{table}[htp!]
	\centering
	\caption{Examples \ref{ex:ellptic_1D_smooth} and \ref{ex:ellptic_2D_smooth}: convergence with respect to $\delta$.\label{tab:ex_elliptic}} 
	\begin{tabular}{c|ccccc|ccccc|}
		\toprule
		\multicolumn{1}{c}{}&
		\multicolumn{5}{c}{(a) Example \ref{ex:ellptic_1D_smooth}}&\multicolumn{5}{c}{(b) Example \ref{ex:ellptic_2D_smooth}}\\
		\cmidrule(lr){2-6} \cmidrule(lr){7-11}
		$\delta$   & 1e-2 & 5e-3 & 1e-3 & 5e-4 &1e-4   & 10e-2 & 5e-2 & 1e-2 & 5e-3 &1e-3\\
		\midrule
		$e_D$ &4.87e-2 & 3.51e-2 & 1.73e-2 & 7.12e-3 & 4.67e-3 & 1.29e-1 & 6.34e-2 & 3.20e-2 & 1.95e-2 & 1.14e-2  \\
		$e_{\sigma}$ & 1.78e-2  & 1.73e-2 & 1.61e-2 & 1.00e-2 & 9.24e-3 &7.70e-2 & 3.55e-2 & 3.12e-2 & 2.23e-2 & 2.13e-2\\
		\bottomrule
	\end{tabular}
\end{table}

\begin{example}[2-D elliptic equation]\label{ex:ellptic_2D_smooth}
$\Omega=(0,1)^2$,  $D^{\dag}(x,y)=2+\sin(2\pi x)\sin(2\pi y)$ and $\sigma^{\dag}(x,y)=1+y(1-y)\sin(\pi x)$. The boundary is $g\equiv 1$ and the two sources are given by $f_1\equiv 1$, $f_2\equiv 10$. The exact solution $u_i^{\dag}$ are computed  with mesh size $h=1/200$.
\end{example}

The numerical results for Example \ref{ex:ellptic_2D_smooth}
are presented in Table \ref{tab:ex_elliptic} and Fig. \ref{Fig:elliptic_2D_sm}.
The regularization parameters  and mesh sizes are initialized to $h=1/16$,
$\alpha_1=10^{-8}$, $h=1/16$,  $\alpha_2=5\times 10^{-6}$  and $H=1/12 $
The empirical convergence rates for $e_D$ and $e_\sigma$ with respect to $\delta$
are about $O(\delta^{0.51})$ and $O(\delta^{0.25})$, respectively, which are
comparable with that for Example \ref{ex:ellptic_1D_smooth}

%
\begin{figure}[htbp]
	\centering
	\begin{tabular}{ccc}
		\includegraphics[width=0.27\textwidth]{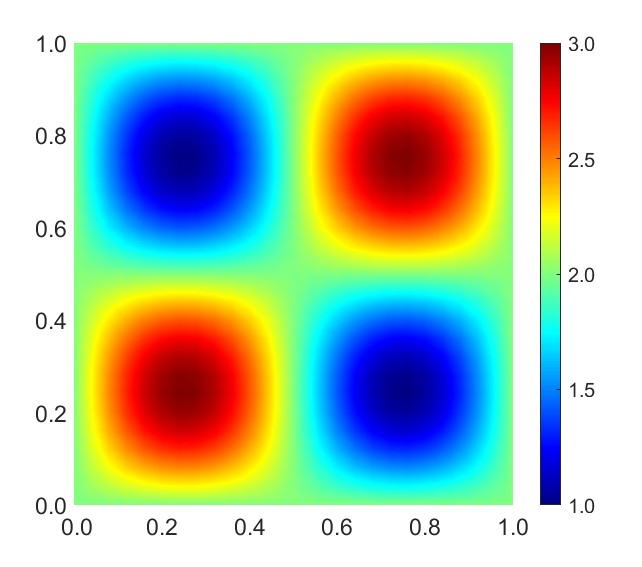}&
		\includegraphics[width=0.27\textwidth]{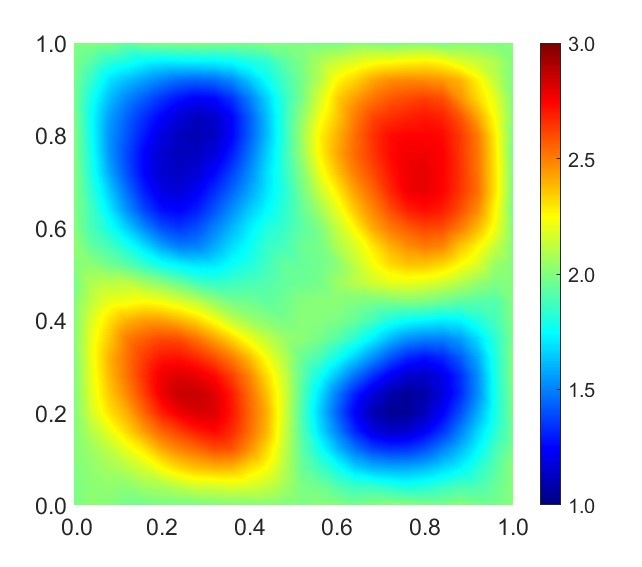}&
		\includegraphics[width=0.27\textwidth]{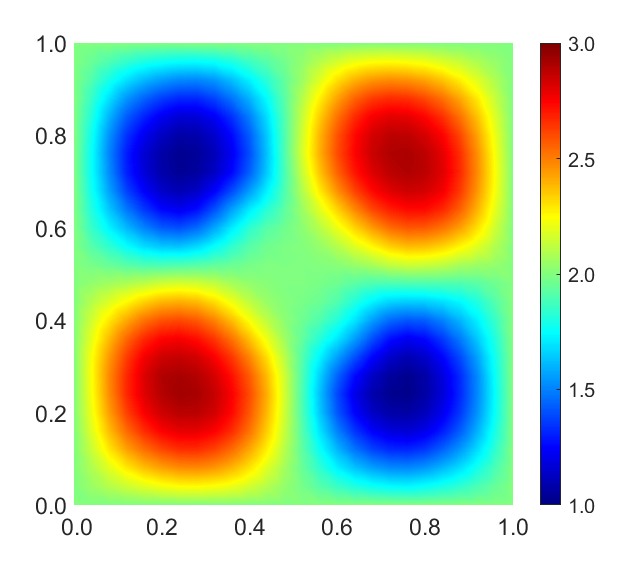}\\
		\includegraphics[width=0.27\textwidth]{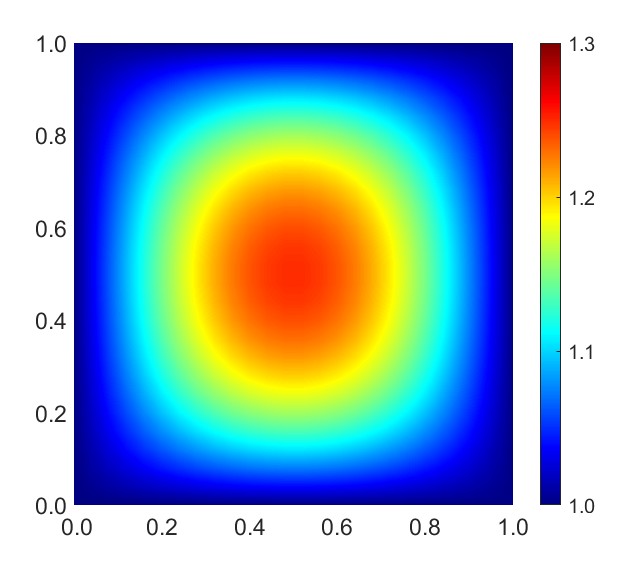}&
		\includegraphics[width=0.27\textwidth]{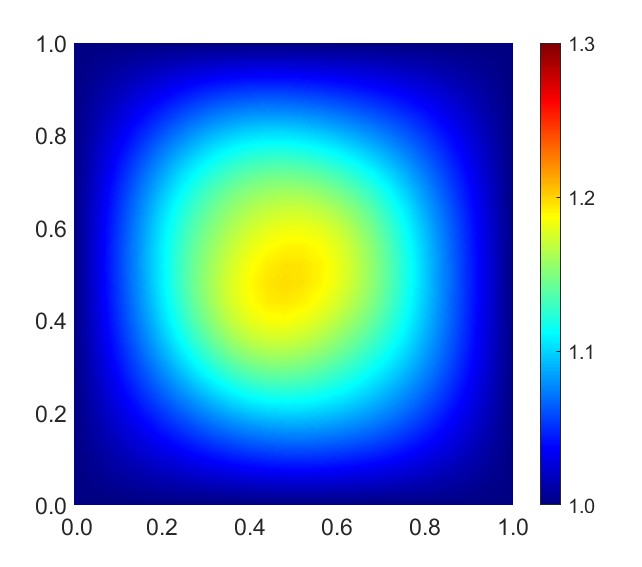}&
		\includegraphics[width=0.27\textwidth]{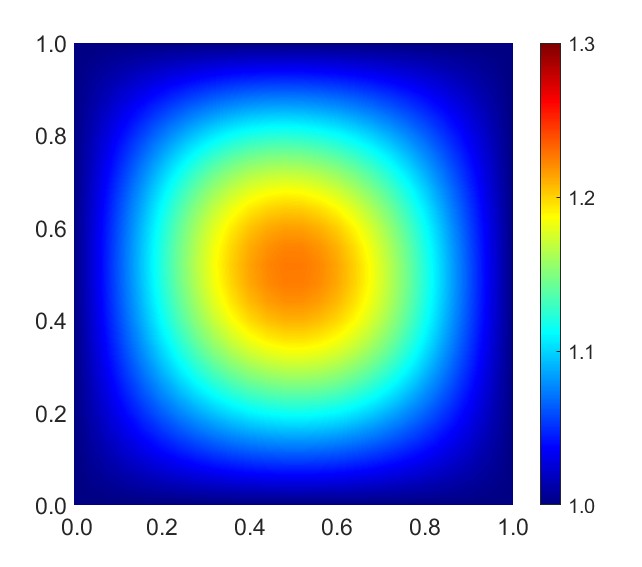}\\
		(a) exact & (b) $\delta=5\times10^{-3}$ & (c) $\delta=5\times10^{-4}$
	\end{tabular}
	\caption{Example \ref{ex:ellptic_2D_smooth}. First row: reconstructions of $D^\dag$.
		Second row: reconstructions of $\sigma^\dag$. }
	\label{Fig:elliptic_2D_sm}
\end{figure}

\begin{figure}[!h]
	\centering
	\subfigure[$e_D$]{
		\includegraphics[width=0.28\textwidth]{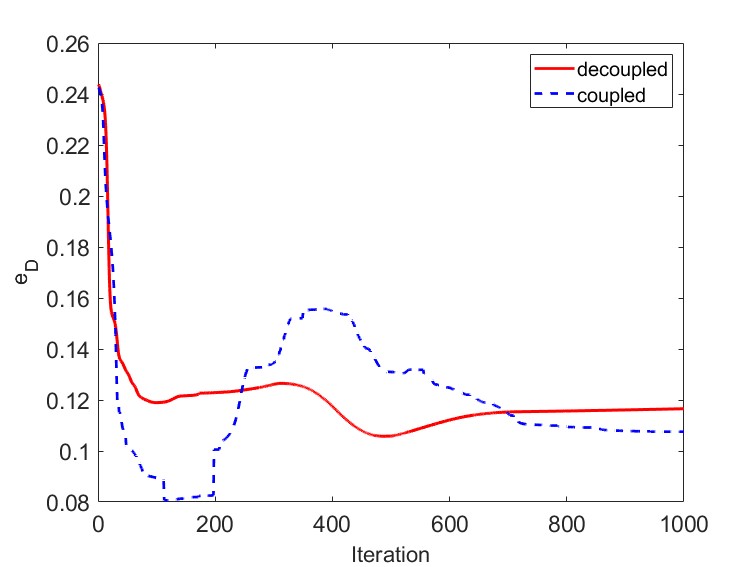}}
	\subfigure[$D_h^*$ coupled scheme]{
		\includegraphics[width=0.25\textwidth]{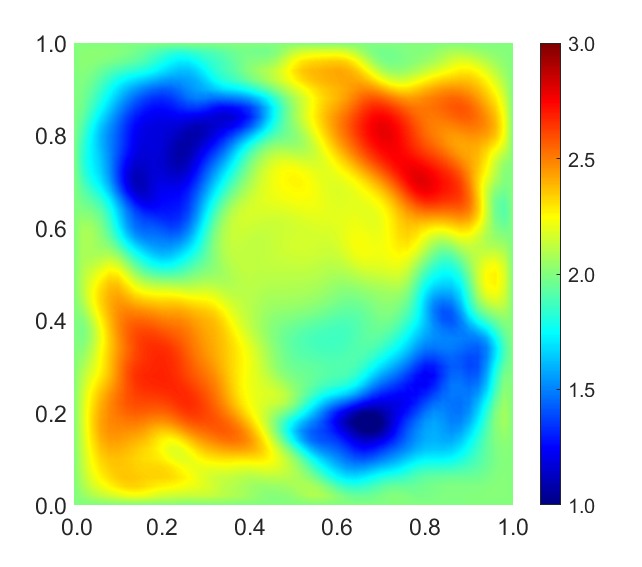}}
	\subfigure[$D_h^*$ decoupled scheme]{
		\includegraphics[width=0.25\textwidth]{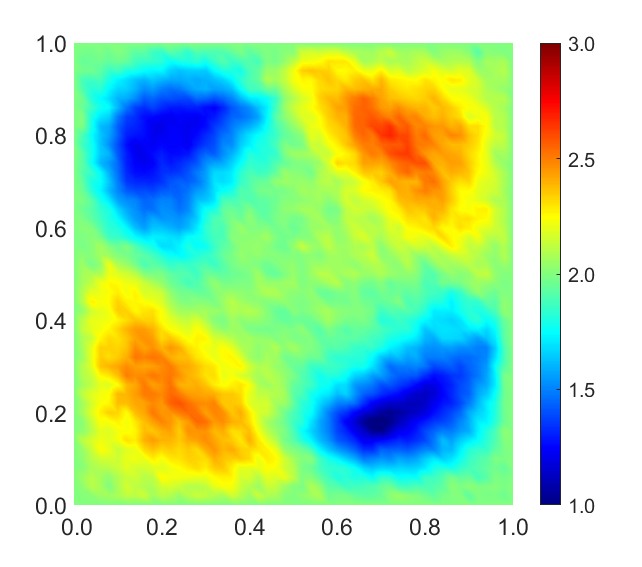}}

	\subfigure[$e_\sigma$]{
		\includegraphics[width=0.28\textwidth]{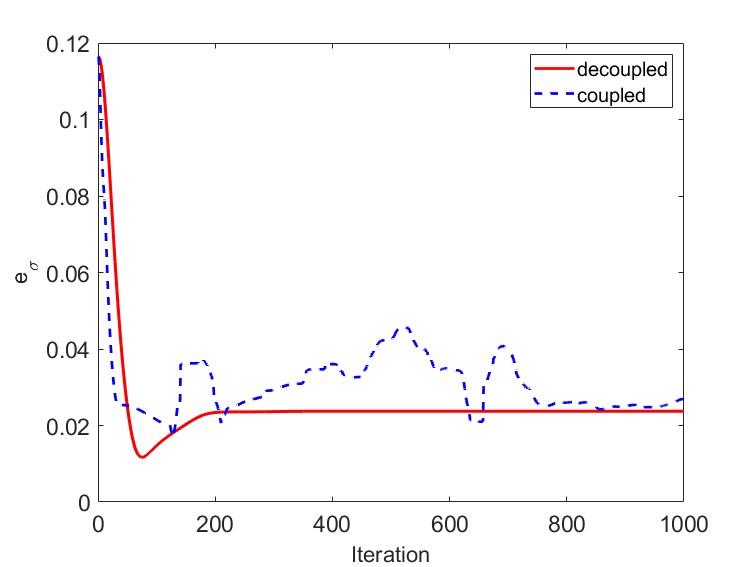}}
	\subfigure[$\sigma_h^*$ coupled scheme]{
		\includegraphics[width=0.25\textwidth]{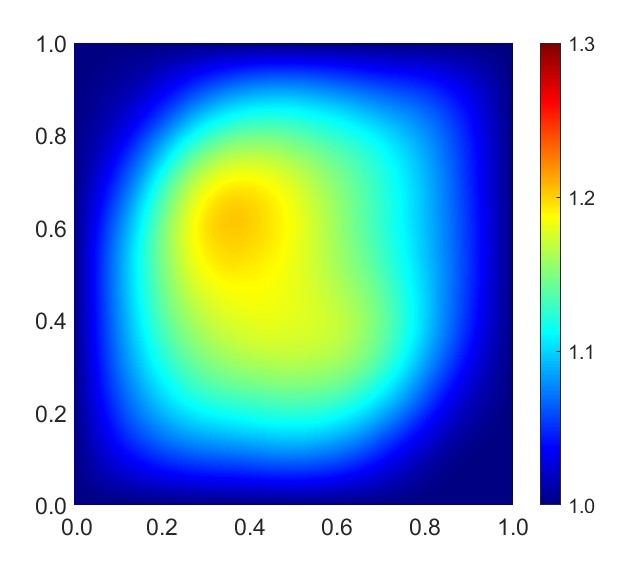}}
	\subfigure[$\sigma_h^*$ decoupled scheme]{
		\includegraphics[width=0.25\textwidth]{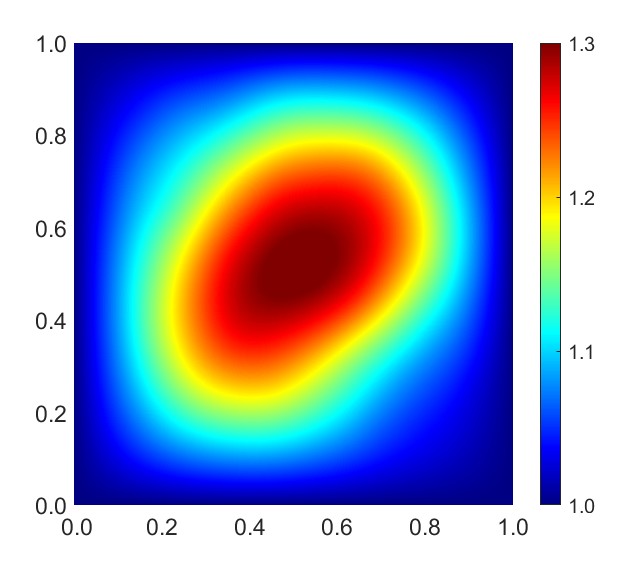}}
	\caption{Comparison between the proposed decoupled algorithm and the coupled scheme \eqref{eqn:coupled_reconstruction}-\eqref{eqn:coupled_reconstruction}.}
	\label{Fig:comparison}
\end{figure}
    
Next, we compare our decoupled reconstruction process with
the scheme \eqref{eqn:coupled_reconstruction}-\eqref{eqn:coupled_eq} where we compute $D_h^*$ and $\sigma_h^*$ simultaneously. We address the optimization problem defined in  \eqref{eqn:coupled_reconstruction}-\eqref{eqn:coupled_eq}, using the conjugate gradient method. The solution process alternates between two directions. We initially set $\sigma_h$ as fixed and employ the conjugate gradient descent to optimize $D_h$. Subsequently, we fix $D_h$ and utilize the conjugate gradient descent to optimize $\sigma_h$. We continue this alternating process until convergence is reached.
In the computation, we select a mesh size of $h=1/50$. The regularization parameters are determined through a process of trial and error.
The reconstruction results, carried out with a noise level of $\delta=2\%$, are presented in Figure \ref{Fig:comparison}. The first column illustrates the convergence of the conjugate gradient iteration. Notably, the errors for the decoupled scheme demonstrate a rapid and steady decay.
However, for the coupled scheme as defined in \eqref{eqn:coupled_reconstruction}-\eqref{eqn:coupled_eq}, the optimization problem is considerably more complex. As a result, the errors exhibit a period of oscillation and require a significantly longer time to converge.

Next, we present numerical results for the parabolic equation.\vskip5pt

\begin{example}[1-D parabolic equation]\label{ex:para_1D_nonsmooth}
 $\Omega=(0,1)$,  $D^{\dag}(x)=2+\sin(2\pi x)$, $\sigma^{\dag}(x)=1-|x-\frac{1}{2}|^{1.1}$, $u_0(x)=1+\frac{1}{2}\sin(\pi x)$ and $g\equiv 1$.
 We take the source term as
 \begin{equation}\label{eqn:para-f}
f(x,t) =\left\{
\begin{aligned}
		&1,\quad \text{for } t\in[0,1.5];\\
		&\tfrac{9}{2}\sin (\tfrac{\pi}{2}(t-\tfrac{5}{2}))+\tfrac{11}{2},\quad\text{for } t\in (1.5,3.5);\\
		&10,\quad \text{for }t\in[3.5,\infty).
\end{aligned}
\right.
\end{equation}
 The measurement is taken in the time-space domain
 $(t,x)\in [0.9,1]\times\Omega$ and $(t,x)\in [4.9,5]\times\Omega$.
The exact solution $u^{\dag}$ are generated with mesh size $h=1/1600$ and $\tau=1/2000$.
Throughout, we use backward Euler scheme, i.e. $k=1$, to discretize in time variable.
\end{example}\vskip5pt

The reconstruction results are listed in Table \ref{tab:ex_para} and Figure \ref{Fig:para_1D_nsm}. For reconstructing diffusion coefficient $D^\dag$, the parameters are taken to be
$h=C_h\delta^{\frac{1}{4}}$, $\tau=C_{\tau}\delta^{\frac{1}{2}}$ and $\alpha_1=C_{\alpha_1}\delta$, according to Theorem \ref{thm:error_weighted_diffusion_para}.
We observe $e_D$ decays in a rate $O(\delta^{0.49})$ with initial mesh size $h=1/16$, time step $\tau=0.1$ and  regularization parameter $\alpha_1=10^{-6}$. As shown in Theorem \ref{thm:error_weighted_potential_para}, for reconstructing $\sigma^\gamma$, we take parameters $H= C_H\delta^{\frac{\gamma}{2}}$, $\tau= C_{\tau}\delta^{\frac{1}{2}}$ and $\alpha_2= C_{\alpha_2}\delta^{2\gamma}$. Here, $\gamma$ represents the empirical convergence rates for recovering $D^\dag$ observed in our experiments.
We initialize the mesh size $H=1/16$, time step $\tau=0.1$ and  regularization parameter $\alpha_2=10^{-5}$. The numerical results show that the error  $e_{\sigma}$ have decay rates  $O(\delta^{0.22})$.
These results are  comparable with that for Examples \ref{ex:ellptic_1D_smooth}  and \ref{ex:ellptic_2D_smooth}.

 \begin{figure}[!h]
	\centering
	\begin{tabular}{ccc}
		\includegraphics[width=0.27\textwidth]{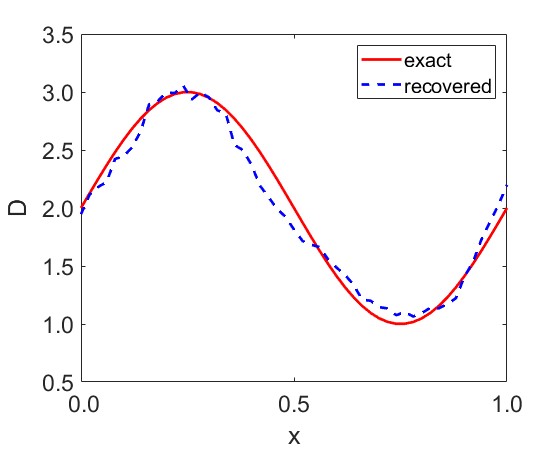}&
		\includegraphics[width=0.27\textwidth]{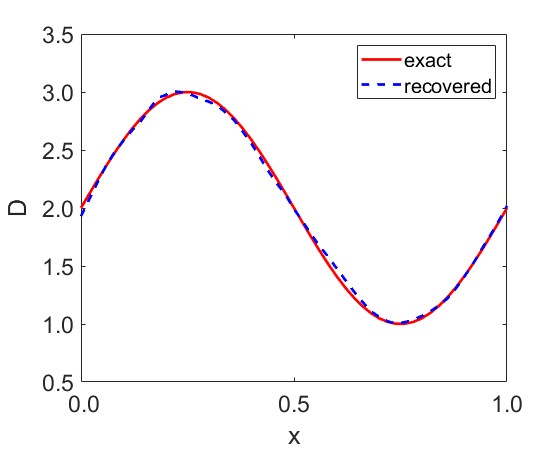}&
		\includegraphics[width=0.27\textwidth]{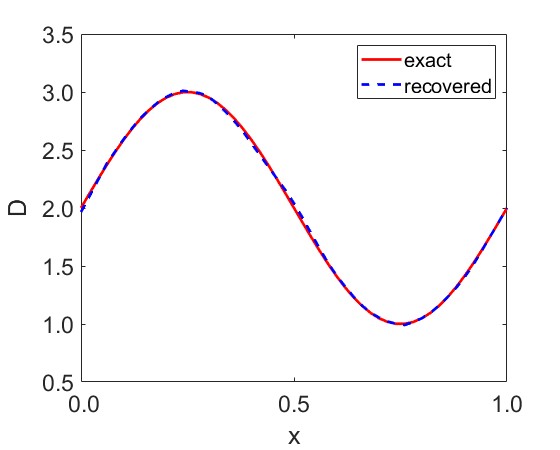}\\
		\includegraphics[width=0.27\textwidth]{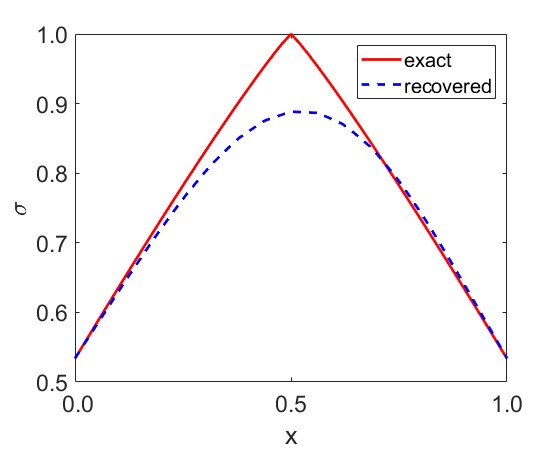}&
		\includegraphics[width=0.27\textwidth]{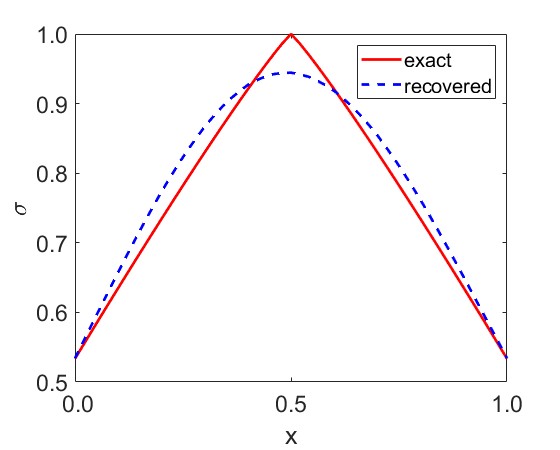}&
		\includegraphics[width=0.27\textwidth]{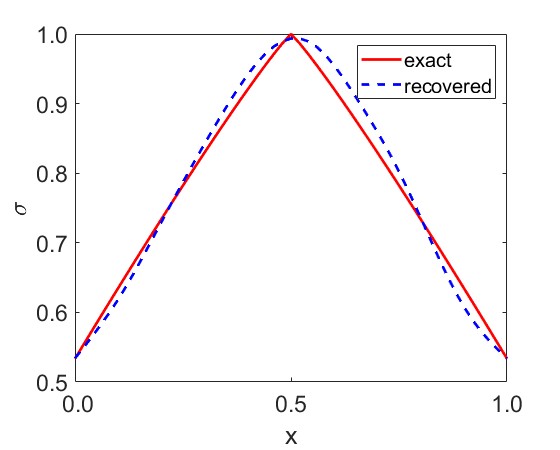}\\
		(a) $\delta=10^{-2}$ & (b) $\delta=10^{-3}$ & (c) $\delta=10^{-4}$
	\end{tabular}
	\caption{Example \ref{ex:para_1D_nonsmooth}. First row: reconstructions of $D^\dag$.
		Second row: reconstructions of $\sigma^\dag$.}
	\label{Fig:para_1D_nsm}
\end{figure}

\begin{table}[htp!]
	\centering
	\caption{Examples \ref{ex:para_1D_nonsmooth} and \ref{ex:para_2D_smooth}:
	convergence with respect to $\delta$.\label{tab:ex_para}} 
	\begin{tabular}{c|ccccc|ccccc|}
		\toprule
		\multicolumn{1}{c}{}&
		\multicolumn{5}{c}{(a) Example \ref{ex:para_1D_nonsmooth}}&\multicolumn{5}{c}{(b) Example \ref{ex:para_2D_smooth}}\\
		\cmidrule(lr){2-6} \cmidrule(lr){7-11}
		$\delta$   & 1e-2 & 5e-3 & 1e-3 & 5e-4 &1e-4   & 1e-2 & 5e-3 & 1e-3 & 5e-4 &1e-4\\
		\midrule
		$e_D$ & 5.49e-2 & 3.78e-2 & 1.37e-2 & 7.45e-3 & 6.69e-3 & 1.59e-1 & 1.00e-1 & 3.15e-2 & 1.98e-3 & 8.78e-3\\
		$e_{\sigma}$ & 5.23e-2  & 5.14e-2 & 3.32e-2 & 2.29e-2 & 2.13e-2 & 4.41e-2 &  2.43e-2 & 2.24e-2 & 1.98e-2 & 1.53e-2\\
		\bottomrule
	\end{tabular}
\end{table}

\begin{example}[2-D parabolic equation]\label{ex:para_2D_smooth}
$\Omega=(0,1)^2$, $D^{\dag}(x,y)=2+e^{-20(x-0.5)^2-20(y-0.7)^2}-e^{-20(x-0.5)^2-20(y-0.3)^2}$, $\sigma^{\dag}(x,y)=1+0.5e^{-20(x-0.6)^2-20(y-0.6)^2}$,  $u_0(x)=1+\frac{1}{2}\sin(\pi x)\sin(\pi y)$ and $g\equiv 1$. The source term is given by \eqref{eqn:para-f}. The measurement data is observed  in the time-space domain $(t,x)\in [0.9,1]\times\Omega$ and $(t,x)\in [4.9,5]\times\Omega$.
 The exact solution $u^{\dag}$ are generated with mesh size $h=1/200$ and $\tau=1/250$.
\end{example}

The numerical results for Example \ref{ex:para_2D_smooth} are shown in Table \ref{tab:ex_para} and Figure \ref{Fig:para_2D_sm}. The process of obtaining these results was split into two steps.
In the first step, the diffusion coefficient was recovered. The parameters for this step were chosen based on the scaling provided in Remark \ref{rem:error_diffusion_D_para}. The initializations for these parameters were set as follows: $h=1/16$, $\tau=0.1$, and $\alpha_1=10^{-6}$.
In the second step, the potential was reconstructed. The parameters for this step were selected according to the guidelines in Remark \ref{rem:pot-err}, and were initialized to $H=1/16 $, $\tau=0.1$, and
$\alpha_2=10^{-6}$. It was discovered that the empirical convergence rate for $e_D$ in relation to $\delta$ was approximately $O(\delta^{0.48})$. This rate is higher than the theoretical one. Moreover, the empirical rate for $e_\sigma$ was found to be about $O(\delta^{0.22})$, which aligns with the theoretical estimate.

 \begin{figure}[!h]
	\centering
	\begin{tabular}{ccc}
		\includegraphics[width=0.27\textwidth]{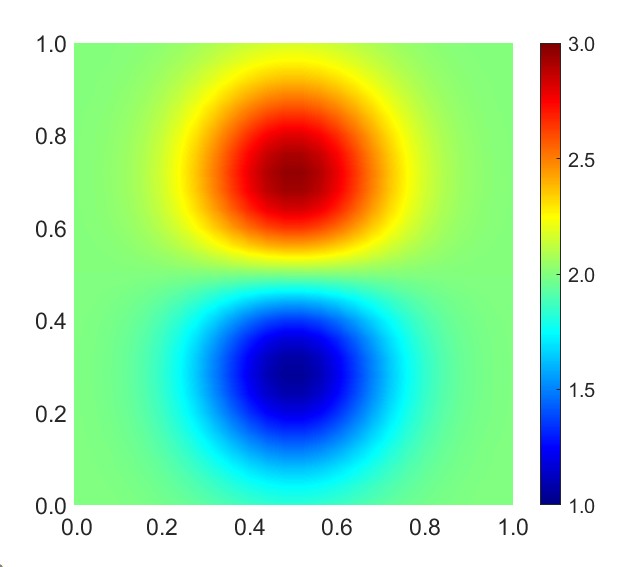}&
		\includegraphics[width=0.27\textwidth]{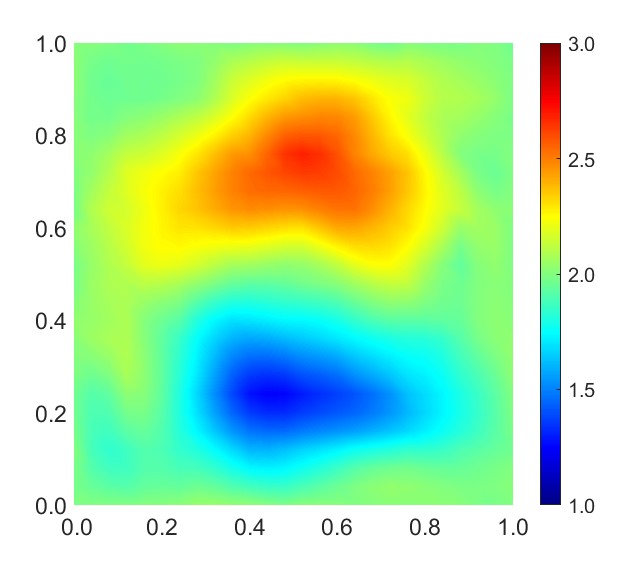}&
		\includegraphics[width=0.27\textwidth]{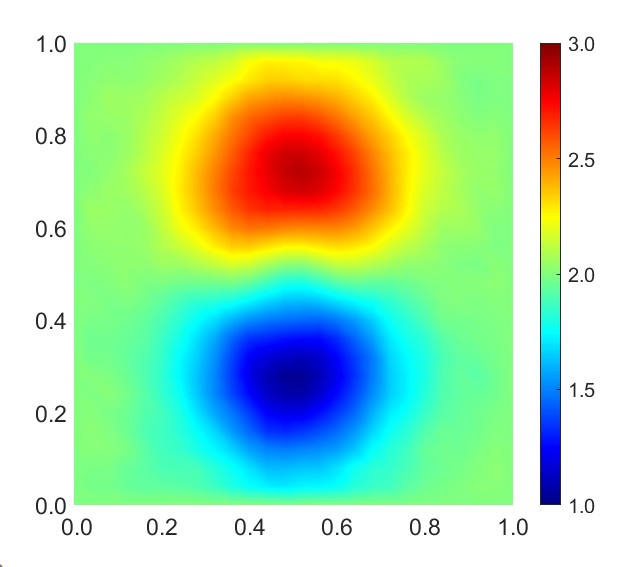}\\
		\includegraphics[width=0.27\textwidth]{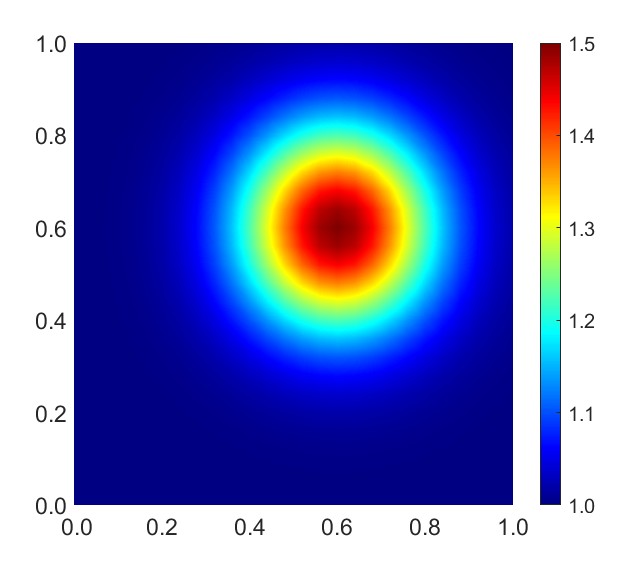}&
		\includegraphics[width=0.27\textwidth]{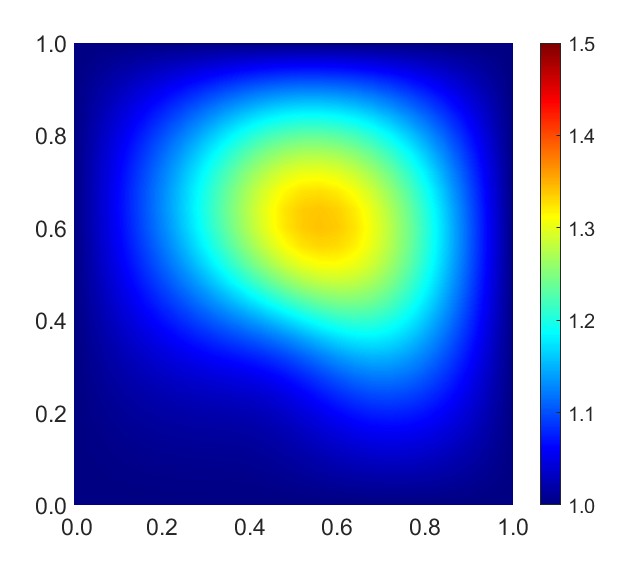}&
		\includegraphics[width=0.27\textwidth]{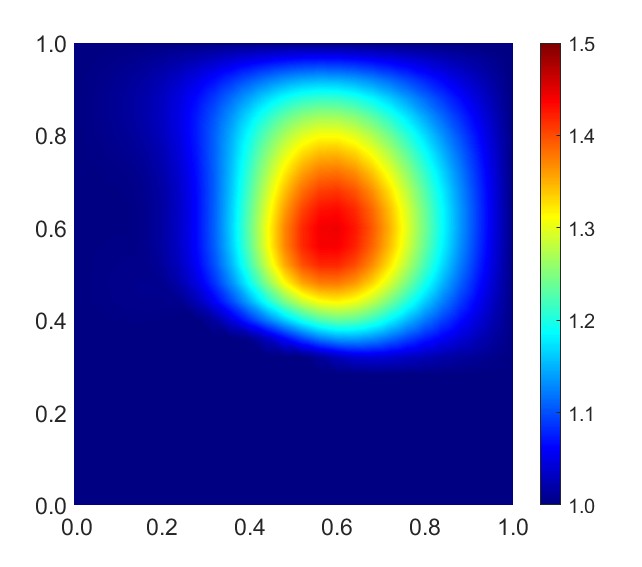}\\
		(a) exact & (b) $\delta=5\times 10^{-3}$ & (c) $\delta=5\times 10^{-4}$
	\end{tabular}
	\caption{Example \ref{ex:para_2D_smooth}. First row: reconstructions of $D^\dag$.
		Second row: reconstructions of $\sigma^\dag$.}
	\label{Fig:para_2D_sm}
\end{figure}

\section{Conclusion}
In this paper, we investigated the simultaneous reconstruction of the diffusion and potential coefficients inherent in elliptic/parabolic equations.  This is achieved through the utilization of two internal measurements of the solutions. We proposed a decoupled algorithm capable of sequentially recovering these two parameters. The approach begins with a straightforward reformulation leading to a standard problem of identifying the diffusion coefficient. This coefficient is numerically recovered, without any requirement for knowledge of the potential, by employing an output least-square method in conjunction with finite element discretization. In the subsequent step, the previously recovered diffusion coefficient becomes instrumental in the reconstruction of the potential coefficient.  The approach is inspired by a constructive conditional stability, and we have provided rigorous \textsl{a priori} error estimates in $L^2(\Omega)$ for the recovered diffusion and potential coefficients. The derivation of these estimates necessitated the development of a weighted energy argument and suitable positivity conditions. These estimates serve as a helpful guide to choose appropriate regularization parameters and discretization mesh sizes, aligned with the noise level.

Interestingly, our numerical experiments indicated that the empirical rates surpassed the theoretical ones. Future work will focus on investigating the reasons behind this discrepancy and improving the error estimate. The scope of this paper was confined to cases where observational data was taken over the entire domain $\Omega$. It would be intriguing to extend the argument to the subdomain observation or the boundary observation. In regard to the parabolic case, future research will consider scenarios where observations are taken at fixed time points $T_1$ and $T_2$, instead of across the time-space domain. Finally, we intend to examine the numerical analysis of the hybrid inverse problem, where the observation is $\sigma u_i$. This could be achieved by exploring a  different conditional stability that complies with a provable positivity condition. The current work forms a solid foundation for these future investigations.

\bibliographystyle{abbrv}

\begin{thebibliography}{10}

\bibitem{Acar:1993}
R.~Acar.
\newblock Identification of the coefficient in elliptic equations.
\newblock {\em SIAM J. Control Optim.}, 31(5):1221--1244, 1993.

\bibitem{AlJamalGockenbach:2012}
M.~F. Al-Jamal and M.~S. Gockenbach.
\newblock Stability and error estimates for an equation error method for
  elliptic equations.
\newblock {\em Inverse Problems}, 28(9):095006, 15, 2012.

\bibitem{Bakaev:2003}
N.~Y. Bakaev, V.~Thom\'{e}e, and L.~B. Wahlbin.
\newblock Maximum-norm estimates for resolvents of elliptic finite element
  operators.
\newblock {\em Math. Comp.}, 72(244):1597--1610, 2003.

\bibitem{BalRen:2011}
G.~Bal and K.~Ren.
\newblock Multi-source quantitative photoacoustic tomography in a diffusive
  regime.
\newblock {\em Inverse Problems}, 27(7):075003, 20, 2011.

\bibitem{Bal:2010}
G.~Bal and G.~Uhlmann.
\newblock Inverse diffusion theory of photoacoustics.
\newblock {\em Inverse Problems}, 26(8):085010, 20, 2010.

\bibitem{BalUhlmann:2013}
G.~Bal and G.~Uhlmann.
\newblock Reconstruction of coefficients in scalar second-order elliptic
  equations from knowledge of their solutions.
\newblock {\em Comm. Pure Appl. Math.}, 66(10):1629--1652, 2013.

\bibitem{Bonito:2017}
A.~Bonito, A.~Cohen, R.~DeVore, G.~Petrova, and G.~Welper.
\newblock Diffusion coefficients estimation for elliptic partial differential
  equations.
\newblock {\em SIAM J. Math. Anal.}, 49(2):1570--1592, 2017.

\bibitem{BrennerScott:2008}
S.~C. Brenner and L.~R. Scott.
\newblock {\em The mathematical theory of finite element methods}, volume~15 of
  {\em Texts in Applied Mathematics}.
\newblock Springer, New York, third edition, 2008.

\bibitem{ChenZhangZou:2022}
Z.~Chen, W.~Zhang, and J.~Zou.
\newblock Stochastic convergence of regularized solutions and their finite
  element approximations to inverse source problems.
\newblock {\em SIAM J. Numer. Anal.}, 60(2):751--780, 2022.

\bibitem{ChenZou:1999}
Z.~Chen and J.~Zou.
\newblock An augmented {L}agrangian method for identifying discontinuous
  parameters in elliptic systems.
\newblock {\em SIAM J. Control Optim.}, 37(3):892--910, 1999.

\bibitem{DeckelnickHinze:2012}
K.~Deckelnick and M.~Hinze.
\newblock Convergence and error analysis of a numerical method for the
  identification of matrix parameters in elliptic {PDE}s.
\newblock {\em Inverse Problems}, 28(11):115015, 15, 2012.

\bibitem{EnglHankeNeubauer:1996}
H.~W. Engl, M.~Hanke, and A.~Neubauer.
\newblock {\em Regularization of {I}nverse {P}roblems}.
\newblock Kluwer, Dordrecht, 1996.

\bibitem{evans:book2022partial}
L.~C. Evans.
\newblock {\em Partial differential equations}, volume~19.
\newblock American Mathematical Society, 2022.

\bibitem{Falk:1983}
R.~S. Falk.
\newblock Error estimates for the numerical identification of a variable
  coefficient.
\newblock {\em Math. Comp.}, 40(162):537--546, 1983.

\bibitem{Gutman:1990}
S.~Gutman.
\newblock Identification of discontinuous parameters in flow equations.
\newblock {\em SIAM J. Control Optim.}, 28(5):1049--1060, 1990.

\bibitem{Guzman:2009}
J.~Guzm{\'a}n, D.~Leykekhman, J.~Rossmann, and A.~H. Schatz.
\newblock H{\"o}lder estimates for green’s functions on convex polyhedral
  domains and their applications to finite element methods.
\newblock {\em Numerische Mathematik}, 112(2):221--243, 2009.

\bibitem{HinzeKaltenbacher:2018}
M.~Hinze, B.~Kaltenbacher, and T.~N.~T. Quyen.
\newblock Identifying conductivity in electrical impedance tomography with
  total variation regularization.
\newblock {\em Numer. Math.}, 138(3):723--765, 2018.

\bibitem{ItoJin:2015}
K.~Ito and B.~Jin.
\newblock {\em Inverse {P}roblems: {T}ikhonov {T}heory and {A}lgorithms}.
\newblock World Scientific Publishing Co. Pte. Ltd., Hackensack, NJ, 2015.

\bibitem{JinLuQuanZhou:2023}
B.~Jin, X.~Lu, Q.~Quan, and Z.~Zhou.
\newblock Convergence rate analysis of {G}alerkin approximation of inverse
  potential problem.
\newblock {\em Inverse Problems}, 39(1):Paper No. 015008, 26, 2023.

\bibitem{JinZhou:SINUM2021}
B.~Jin and Z.~Zhou.
\newblock Error analysis of finite element approximations of diffusion
  coefficient identification for elliptic and parabolic problems.
\newblock {\em SIAM J. Numer. Anal.}, 59(1):119--142, 2021.

\bibitem{KaltenbacherRundell:2020a}
B.~Kaltenbacher and W.~Rundell.
\newblock Recovery of multiple coefficients in a reaction-diffusion equation.
\newblock {\em J. Math. Anal. Appl.}, 481(1):123475, 23, 2020.

\bibitem{Karkkainen:1997}
T.~K\"{a}rkk\"{a}inen.
\newblock An equation error method to recover diffusion from the distributed
  observation.
\newblock {\em Inverse Problems}, 13(4):1033--1051, 1997.

\bibitem{KeungZou:1998}
Y.~L. Keung and J.~Zou.
\newblock Numerical identifications of parameters in parabolic systems.
\newblock {\em Inverse Problems}, 14(1):83--100, 1998.

\bibitem{KohnLowe:1988}
R.~V. Kohn and B.~D. Lowe.
\newblock A variational method for parameter identification.
\newblock {\em RAIRO Mod\'{e}l. Math. Anal. Num\'{e}r.}, 22(1):119--158, 1988.

\bibitem{LiMa:2022}
B.~Li and S.~Ma.
\newblock Exponential convolution quadrature for nonlinear subdiffusion
  equations with nonsmooth initial data.
\newblock {\em SIAM J. Numer. Anal.}, 60(2):503--528, 2022.

\bibitem{Nocedal:1999}
J.~Nocedal and S.~J. Wright.
\newblock {\em Numerical optimization}.
\newblock Springer, 1999.

\bibitem{Rannacher:1982}
R.~Rannacher and R.~Scott.
\newblock Some optimal error estimates for piecewise linear finite element
  approximations.
\newblock {\em mathematics of computation}, 38(158):437--445, 1982.

\bibitem{Richter:1981}
G.~R. Richter.
\newblock Numerical identification of a spatially varying diffusion
  coefficient.
\newblock {\em Math. Comp.}, 36(154):375--386, 1981.

\bibitem{Stewart:1974}
H.~B. Stewart.
\newblock Generation of analytic semigroups by strongly elliptic operators.
\newblock {\em Trans. Amer. Math. Soc.}, 199:141--162, 1974.

\bibitem{Thomee:2006}
V.~Thom\'{e}e.
\newblock {\em Galerkin finite element methods for parabolic problems},
  volume~25 of {\em Springer Series in Computational Mathematics}.
\newblock Springer-Verlag, Berlin, second edition, 2006.

\bibitem{Vazquez:1984}
J.~L. V\'{a}zquez.
\newblock A strong maximum principle for some quasilinear elliptic equations.
\newblock {\em Appl. Math. Optim.}, 12(3):191--202, 1984.

\bibitem{WangZou:2010}
L.~Wang and J.~Zou.
\newblock Error estimates of finite element methods for parameter
  identifications in elliptic and parabolic systems.
\newblock {\em Discrete Contin. Dyn. Syst. Ser. B}, 14(4):1641--1670, 2010.

\bibitem{Yeh:1986}
W.~W.~G. Yeh.
\newblock Review of parameter identification procedures in groundwater
  hydrology: The inverse problem.
\newblock {\em Water Resources Res.}, 22(2):95--108, 1986.

\bibitem{ZhangZhangZhou:2022SINUM}
Z.~Zhang, Z.~Zhang, and Z.~Zhou.
\newblock Identification of {P}otential in {D}iffusion {E}quations from
  {T}erminal {O}bservation: {A}nalysis and {D}iscrete {A}pproximation.
\newblock {\em SIAM J. Numer. Anal.}, 60(5):2834--2865, 2022.

\bibitem{Zou:1998}
J.~Zou.
\newblock Numerical methods for elliptic inverse problems.
\newblock {\em Int. J. Comput. Math.}, 70(2):211--232, 1998.

\end{thebibliography}

\end{document}